\definecolor{gray}{gray}{0.7}
\newcommand{\slc}[1]{\langle #1\rangle} 
\newcommand{\cs}{\text{\rm c}} 
\newcommand{\olX}{\,\overline{\!X}}
\newcommand{\Ray}{{\rm R}}
\numberwithin{equation}{section}
\theoremstyle{plain}
\newtheorem{theorem}[equation]{Theorem}
\newtheorem{thm}[equation]{Theorem}
\newtheorem{proposition}[equation]{Proposition}
\newtheorem{lemma}[equation]{Lemma}
\newtheorem{corollary}[equation]{Corollary}
\newtheorem{conjecture}[equation]{Conjecture}
\newtheorem{lem}[equation]{Lemma}
\newtheorem{prop}[equation]{Proposition}
\newtheorem{cor}[equation]{Corollary}
\newtheorem{example}[equation]{Example}
\newcommand{\sub}{\subset}
\newcommand{\bZ}{{\mathbf Z}}
\theoremstyle{remark}
\newtheorem{remark}[equation]{Remark}
\theoremstyle{definition}
\newtheorem{definition}[equation]{Definition}
\newtheorem*{question*}{Question}
\newcommand{\T}{\text{\rm T}}
\newcommand{\CT}{\cC_\T} 
\newcommand{\dT}{d_\T} 
\newcommand{\bb}[1]{\llbracket #1\rrbracket}
\newcommand{\cC}{{\mathscr C}}
\newcommand{\E}{\mathbb E}
\newcommand{\F}{{\mathcal F}}
\newcommand{\I}{{\textbf I}}
\newcommand{\M}{{\mathbf M}}
\newcommand{\R}{\mathbb R}
\newcommand{\Z}{\mathbb Z}
\newcommand{\bI}{{\mathbf I}}
\newcommand{\al}{\alpha}
\newcommand{\ben}{\begin{enumerate}}
\newcommand{\bit}{\begin{itemize}}
\newcommand{\cat}{\operatorname{CAT}}
\newcommand{\cone}{\operatorname{C}}
\newcommand{\ccone}{\overline{\operatorname{C}}}
\newcommand{\D}{\partial}
\newcommand{\de}{\delta}
\newcommand{\een}{\end{enumerate}}
\newcommand{\eit}{\end{itemize}}
\newcommand{\eps}{\epsilon}
\newcommand{\Fill}{\operatorname{Fill}}
\newcommand{\ga}{\gamma}
\newcommand{\id}{\operatorname{id}}
\newcommand{\im}{\operatorname{Im}}
\newcommand{\la}{\lambda}
\newcommand{\om}{\omega}
\newcommand{\on}{\:\mbox{\rule{0.1ex}{1.2ex}\rule{1.1ex}{0.1ex}}\:}
\newcommand{\si}{\sigma}
\newcommand{\Si}{\Sigma}
\newcommand{\spt}{\operatorname{spt}}
\newcommand{\st}{\textrm{st}}
\newcommand{\ul}{\underline}
\newcommand{\loc}{\text{\rm loc}}
\newcommand{\es}{\emptyset}
\newcommand{\di}{\D_\infty}
\newcommand{\B}[2]{B_{#1}(#2)}   
\newcommand{\Sph}[2]{S_{#1}(#2)} 
\begin{document}

\title[Morse quasiflats II]{Morse Quasiflats II}
\author{Jingyin Huang}
\author{Bruce Kleiner}
\author{Stephan Stadler}
\thanks{The first author thanks Max-Planck Institute for mathematics at Bonn, where part of work was done. The second author was supported by NSF grant DMS-1711556 and a Simons Collaboration grant
a NSF grant DMS-1405899 and a Simons Fellowship. The third author was supported by  DFG grant SPP 2026.}

\date{\today}

\begin{abstract}
This is the second in a two part series of papers concerning Morse quasiflats -- higher dimensional analogs of Morse quasigeodesics.  Our focus here is on their asymptotic structure. In metric spaces with convex geodesic bicombings, we prove asymptotic conicality, uniqueness of tangent cones at infinity and Euclidean volume growth rigidity for Morse quasiflats.  Moreover, we provide some immediate consequences.

\end{abstract}
\maketitle

\tableofcontents

\section{Introduction}

\subsection{Overview} Gromov hyperbolicity has been a central concept in geometric group theory since it was first introduced in [Gro87]. Over the years, it has inspired a large number of variations,  which extend different aspects of hyperbolicity to more general settings \cite{farb1998relatively,bowditch2012relatively,dructu2005tree,osin2006relatively,Ballmann_axial,dructu2010divergence,ol2009lacunary,sela1997acylindrical,osin2016acylindrically,bestvina2015constructing,dahmani2017hyperbolically,monod2006orbit,hamenstadt2008bounded,thom2009low,masur2000geometry,kim2014geometry,behrstock2017hierarchically,bowditch2013coarse,alonso1995semihyperbolic,duchin2012statistical,chatterjee2019average} (the list here is not intended to be complete).  One strand in this literature is concerned with  ``directional hyperbolicity'', an approach originating in Ballmann's work on rank $1$ geodesics; this is a robust notion variously characterized as rank 1/Morse/contracting/sublinear (quasi)geodesics and subsets, or via tree graded structure, \cite{Ballmann_axial,bestvina2002bounded,dructu2010divergence,charney2014contracting,MR3737283,sisto2018contracting,sublinear_boundary,durham2015convex,cordes2017stability,MR3690269,kapovich1998quasi,dructu2005tree}.

Our aim in this paper and our previous paper \cite{HKS} is to develop higher dimensional aspects of directional hyperbolicity via Morse quasiflats -- higher dimensional analogs of Morse quasigeodesics. 
While \cite{HKS}  was primarily concerned with examining different alternative definitions of Morse quasiflats, proving their equivalence and quasi-isometry invariance, our objective in this paper is to establish asymptotic structural results.  
The present paper is independent from \cite{HKS}, apart from  a single statement which is rather intuitive (see Section~\ref{sec_definition}).

The main result here is  ``asymptotic conicality'' of Morse quasiflats: any sequence of blow-downs converges to a cone.
Hence, in analytic terms, Morse quasiflats possess unique tangent cones at infinity. The issue of uniqueness of tangent cones (at infinity) is 
fundamental in geometric analysis and has arisen in many places, in particular for (quasi)minimizing varieties \cite{T_reg, AA_rad,  Si_asym, W_tangent,RT_sing,higherrank}, harmonic maps \cite{ GW_rate, W_nonuni,Ha_sing}, Einstein manifolds \cite{ CT_cone,CM_uni_E}, and geometric flows \cite{W_size, W_nature,GK_uni, CM_uni_L,CM_level}.
While the uniqueness of (local) tangent cones is intimately related to regularity questions and the fine structure of singular sets, uniqueness of tangent cones
at infinity provide a description of the asymptotic structure --- essential to large-scale geometry.   The proofs of the above results have the same general strategy --  induction on scales -- but otherwise they vary considerably and are quite different from the argument used in this paper. 

\subsection{Motivation from large scale geometry}

For a space or a group $X$ satisfying some weak form of non-positive curvature condition, there is typically a space $\Sigma_X$ encoding the asymptotic intersecting pattern of certain collections of flats or abelian subgroups in $X$. This plays a fundamental role in understanding  the coarse geometry of $X$. Some well-known examples are:
\begin{enumerate}
	\item When $X$ is Gromov hyperbolic, $\Sigma_X$ is the Gromov boundary.
	\item When $X$ is a symmetric space of non-compact type or a Euclidean building, $\Sigma_X$ is the Tits boundary.
	\item When $X$ is a mapping class group, $\Sigma_X$ is the curve complex.
\end{enumerate}
In all these examples, $\Sigma_X$ concerns only top rank flats and their coarse intersections, which offers sufficiently robust information on their asymptotic geometry. However, for many other examples (e.g. Coxeter groups, Artin groups or more general $\cat(0)$ groups), the natural definition of $\Sigma_X$ necessarily involves flats/quasiflats (or abelian subgroups) which do not arise as coarse intersections of top rank flats, in order to avoid substantial loss of information \cite{kim2014geometry,morris2019parabolic,davis2017determining}. The task of identifying more general classes of flat/quasiflats that are coarse invariants is closely related to higher dimensional versions of Gromov hyperbolicity. This leads to the study of Morse quasiflats.

In examples (1)-(3) above, $\Sigma_X$ serves as a fundamental invariant in the study of quasi-isometric rigidity; a major step in proving the quasi-isometry invariance of $\Sigma_X$  is to understand the structure of top dimensional quasiflats \cite{Gromov_hyp,kleiner1997rigidity,eskin1997quasi,hamenstaedt2005geometry,MR2928983,bowditch2017large,behrstock2017quasiflats,fisher2018quasi}. Analogously, in more general situations one would start with an analysis of Morse quasiflats. The asymptotic conicality makes Morse quasiflats an accessible quasi-isometry invariant.

It is worth noting that certain lower dimensional quasiflats/flats have been studied earlier in different contexts, including relative hyperbolic spaces \cite{hruska_isolated,behrstock2009thick}, quasi-isometric classification of right-angled Artin groups and hierarchically hyperbolic spaces \cite{huang2017quasi,behrstock2017quasiflats}.  In fact, the lower dimensional quasiflats studied in these cases are specific examples of Morse quasiflats.

\bigskip
\subsection{The definition of Morse quasiflats}~
\label{subsec_intro_def}
We now give one of the definitions of Morse quasiflat from \cite{HKS}. Recall that each point in any asymptotic cone of a Morse quasi-geodesic  is a cut point of the cone \cite{dructu2010divergence}. A Morse quasiflat in our sense can be defined through a higher dimensional version of  this cut point property. 
%
%
%

\begin{definition}[Morse quasiflat]
	\label{def_morse_intro}
	An $n$-dimensional quasiflat $Q$ in a metric space $X$ is called {\em Morse}, if for any asymptotic cone $X_\om$ of $X$ and any $q_\om$ in the limit $Q_\om$ of $Q$, the map $ H_n(Q_\om,Q_\om\setminus\{q_\om\},\mathbb Z)\to H_n(X,X\setminus\{q_\om\},\mathbb Z)$ is injective.
\end{definition}
See Section~\ref{sec_definition}, as well as  \cite{HKS}, for several equivalent definitions without using asymptotic cones.

\bigskip
\begin{remark}
In recent literature the term ``Morse''  was used in different contexts:
\begin{enumerate}
	\item Morse subsets \cite{MR3690269,genevois2017hyperbolicities} respectively strongly quasiconvex subsets \cite{Tran_strongly_quasiconvex}, which can be viewed as a local version of relative hyperbolicity;

\item a Morse lemma was proved for regular quasi-geodesics in \em{higher rank} symmetric spaces \cite{KLP_morse}.
\end{enumerate}
We emphasize that although the historical origin of ``Morse'' in (1) and (2) is the same as for our work, we caution the reader that meanings are usually not compatible.
\end{remark}

For instance a Morse quasi-geodesic $l$ in a finitely generated group $G$ 
gives a Morse quasiflat $l\times l$ in the product $G\times G$. Such a quasiflat will not be a Morse subset in the sense of \cite{MR3690269,genevois2017hyperbolicities}, unless $G$ is virtually $\Z$. Another instructive example to keep in mind is that a periodic flat in a proper CAT(0) space is a 
Morse quasiflat if and only if it does not bound a flat half-space. 
Morse quasiflats are in generally not isolated, i.e. they usually intersect other quasiflats along non-trivial sub-quasiflats.
 More interesting examples can be found in \cite[Section 1.6]{HKS}.
 
We also introduce the notion of pointed Morse quasiflat, which is identical to Definition~\ref{def_morse_intro} except that the basepoint $q_\om$ comes from the constant sequence (see Definition~\ref{def:pointed}). Roughly speaking, a pointed Morse quasiflat is allowed to be less and less ``Morse'' if we move further and further away from the base point. Morse quasiflats are invariant under quasi-isometries, while pointed Morse quasiflats are even invariant under sublinearly bilipschitz equivalences in the sense of \cite{cornulier2019sublinear}.

\bigskip
\subsection{Statement of results}
  \label{subsec_visibility}
 
For simplicity, we will state the results for $\cat(0)$ spaces. The main theorem (Theorem~\ref{thm_sublinear_close_intro}) is proved in the more general setting of spaces with a convex geodesic bicombing (see Definition~\ref{def:bicombing}), 
including Busemann convex spaces and injective metric spaces. Let $X$ be a $\cat(0)$ space. 
Let $\D_T X$ be the Tits boundary of $X$. For a base point $p\in X$ and a subset $A\subset \D_TX$, we denote the union of all geodesics from $p$ to a point in $A$ by $\cone_p(A)$. This is called a \emph{geodesic cone} over $A$. We use $B_p(r)$ to denote the ball of radius $r$ at $p$.

Recall that quasi-geodesics in Gromov-hyperbolic spaces are at finite Hausdorff distance from geodesics. While quasi-geodesics in $\cat(0)$ spaces do not enjoy such a property in general (e.g. the logarithmic spiral quasi-geodesic in $\R^2$), it was known that top-dimensional quasiflats in $\cat(0)$ spaces are relatively well-behaved \cite{higherrank}, notably for their ``cone-like'' feature. For instances, top-dimensional quasiflats in symmetric spaces of non-compact type are Hausdorff close to a finite union of Weyl cones \cite{kleiner1997rigidity,eskin1997quasi}, and top-dimensional quasiflats in $\cat(0)$ cube complexes are Hausdorff close to a finite union of orthants \cite{bks1,huang_quasiflat,behrstock2017quasiflats,bowditch2019quasiflats}. Note that Weyl cones and orthants are specific types of geodesic cones, and top-dimensional quasiflats are specific types of Morse quasiflats.  Our main result shows that Morse quasiflats in general $\cat(0)$ spaces are sublinearly close to geodesic cones.
 

\begin{thm}[Asymptotic conicality,  Corollary~\ref{cor_sublinear_close}]
	\label{thm_sublinear_close_intro}
	\quad Suppose $Q$ is a Morse $(L,A)$-quasiflat in a proper $\cat(0)$ space $X$. Then there is a subset $\D_T Q\subset\D_T X$ such that for any base point $p\in X$ the following holds true :  
	\begin{equation*}
	\lim_{r\to\infty} \frac{d_H(B_p(r)\cap Q,B_p(r)\cap C_p(\partial _T Q))}{r}=0.
	\end{equation*}
In particular, the subset $\partial_T Q$ consists of ideal points  represented by rays which are sublinearly close to $Q$ in the sense of Definition~\ref{def_Tits_boundary}.
\end{thm}

We draw the reader's attention to a variety of enhancements of the theorem that may be found in Section~\ref{sec_visibility_morse_quasiflats}, such as Theorem~\ref{thm_cone_is_close_to_quasiflat} (another form of asymptotic conicality), Proposition~\ref{prop_unique_tangent_cone_infty} (uniqueness of tangent cone at infinity) and Proposition~\ref{prop:full support} (structure of $\partial_T Q$). 

It is natural to ask when the sublinear estimate of Theorem~\ref{thm_sublinear_close_intro} can be improved to a finite Hausdorff distance estimate. While the stronger estimate fails in general, we provide a simple criterion showing that it does hold in many interesting cases (see Example~\ref{example_exp} and Proposition~\ref{prop_hausdorff}).
  
The asymptotic conicality exhibited in Theorem~\ref{thm_sublinear_close_intro} has  precursors.  The first is Eberlein and O'Neill's notion of ``visibility''   \cite{eberlein1973visibility}. The $n=1$ case has been known for a while \cite[Proposition 3.24]{dructu2010divergence}. Also, the special case of top-dimensional quasiflats was covered more recently by one of the main results from \cite{higherrank}.  

The proof of Theorem~\ref{thm_sublinear_close_intro}, like that of \cite{higherrank} and the other results on uniqueness of tangent cones mentioned in the overview above, is based on an induction on scales.  However the argument here is quite different; in particular,   the approach in \cite{higherrank} breaks down completely without the assumption of top dimensionality.     See Section~\ref{sec_informal_proof} for more on this, and a sketch of the proof.

Let $\di Q$ be the set of limit points of $Q$ in the ideal boundary $\di X$ with respect to the cone topology, then we have $\di Q=\D_T Q$ (see Lemma~\ref{lem_ideal_equ_tits}); though $\partial_\infty X$ is not a quasi-isometry invariant in general \cite{croke2000spaces}, this shows that quasi-isometries actually respect subsets of $\partial_\infty X$ arising from Morse quasiflats (see Corollary~\ref{cor_boundary_map}).

\bigskip
\begin{remark}
It is natural to ask whether the subset $\partial_T Q$, endowed with induced metric from $\D_T X$, is bilipschitz to a standard sphere. This seems to be a rather subtle issue. However, we know the Euclidean cone over $\partial_T Q$ is bilipschitz homeomorphic to $\mathbb E^n$, see Proposition~\ref{prop:full support}; moreover $\partial_T Q$ has the structure of a cycle in the sense of Definition~\ref{morse} and Proposition~\ref{prop:full support}. Also Remark~\ref{rmk:sphere} gives cases when $\partial_T Q$ is indeed a sphere, which applies to Theorem~\ref{thm:cube Morse lemma}.
\end{remark}

%

\begin{remark}
	\label{rmk:cycle}

In Theorem~\ref{thm_sublinear_close_intro}, it is crucial that quasiflats ``do not have boundary'' (they can be represented by Lipschitz chains, and as such they are cycles).
For instance, the conclusion of Theorem~\ref{thm_sublinear_close_intro} is not true for quasi-isometrically embedded Euclidean sectors which are Morse (consider 
the image of a quadrant in the Euclidean plane under a self-quasi-isometry).
\end{remark}

We also provide uniqueness and rigidity results.

\begin{thm}[Theorem~\ref{thm:uniqueness body}]
	\label{thm:uniqueness}
	Suppose $X$ is a proper $\cat(0)$ space, and $Q_1,\,Q_2\subset X$ are Morse quasiflats. Then there exists a positive constant $C$, depending only on $X,\dim Q_1$, the quasi-isometry constants of $Q_1$ and the Morse data of $Q_1$, such that
	$\partial_T Q_1=\partial_T Q_2$ implies
	\[d_H(Q_1,Q_2)<C.\]
\end{thm}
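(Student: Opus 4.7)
The plan is a two-step argument: first convert equality of Tits boundaries into sublinear Hausdorff closeness via Theorem~\ref{prop_sublinear_close_intro}, and then upgrade sublinear closeness to a uniform bound using the Morse property of $Q_1$ alone.

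For the first step, fix a basepoint $p\in X$ and set $Z:=\partial_T Q_1=\partial_T Q_2$. Applying Theorem~\ref{prop_sublinear_close_intro} to $Q_1$ and $Q_2$ separately yields sublinear functions $\delta_1,\delta_2:[0,\infty)\to[0,\infty)$ with $\delta_i(r)\to 0$ as $r\to\infty$, satisfying
\[
d_H\bigl(Q_i\cap B_p(R),\,\cone_p(Z)\cap B_p(R)\bigr)\le \delta_i(r)\cdot R\qquad(R\ge r).
\]
The triangle inequality for Hausdorff distance gives the same estimate for the pair $(Q_1,Q_2)$ with $\delta:=\delta_1+\delta_2$. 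Specializing $r=\sqrt R$ shows that every $x\in Q_1\cup Q_2$ satisfies $d(x,Q_{3-i})=o(d(p,x))$ as $d(p,x)\to\infty$.

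For the second step, I would argue by contradiction. If $d_H(Q_1,Q_2)=\infty$, then after relabelling there is a sequence $x_k\in Q_2$ with $t_k:=d(x_k,Q_1)\to\infty$, and the sublinear bound forces $t_k=o(d(p,x_k))$. Pick an intermediate radius $\rho_k$ with $t_k\ll\rho_k\ll d(p,x_k)$ (for instance $\rho_k:=\sqrt{t_k\,d(p,x_k)}$) and restrict the QI-chart of $Q_2$ to a ball of radius $\rho_k$ around $x_k$; this is a quasi-$n$-disk in $Q_2$ of mass comparable to $\rho_k^n$. By Step~1 its boundary sphere lies within $o(\rho_k)$ of $Q_1$. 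Projecting the boundary onto $Q_1$ by nearest-point geodesics and filling the resulting thin cylinder using the cone inequality in $X$ produces a Lipschitz $n$-chain whose boundary lies on $Q_1$, whose mass is $(1+o(1))$ times the quasi-disk's mass, and which contains $x_k$ as an interior point. The quasi-minimising characterisation of Morse quasiflats from \cite{HKS}, applied to $Q_1$, then forces any such chain to lie in a neighbourhood of $Q_1$ of thickness $M$ depending only on $X$, $n$, and the QI/Morse data of $Q_1$, contradicting $t_k\to\infty$.

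The main obstacle is executing Step~2 with constants truly independent of $Q_2$: although the sublinear bound in Step~1 uses the data of $Q_2$ through $\delta_2$, this dependence must be absorbable into the $o(1)$-terms so that in the end the Morse property of $Q_1$ alone supplies the bound. Concretely, one must carry out the boundary modification and filling so that the mass of the resulting $n$-chain is controlled by the Euclidean mass of the chart independently of $Q_2$, and then invoke the quasi-minimising form of the Morse condition from \cite{HKS} to obtain a thickness $M$ depending only on the data of $Q_1$. The resulting $C$ can then be taken to be any upper bound for this $M$ together with the bounded distance from $Q_1$ to the cone over $Z$ coming from Theorem~\ref{prop_sublinear_close_intro} near the basepoint.
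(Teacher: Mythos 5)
Your Step~1 is exactly the paper's first step: apply the asymptotic conicality statement (in the body, Corollary~\ref{cor_sublinear_close}) to $Q_1$ and $Q_2$ with a common basepoint and conclude from $\partial_T Q_1=\partial_T Q_2$ that the two quasiflats are at sublinear Hausdorff distance. For the upgrade from sublinear to uniformly bounded distance the paper simply invokes \cite[Proposition 10.4 and Theorem 9.5]{HKS}; you instead try to prove the upgrade from scratch, and this is where there is a genuine gap. Your contradiction argument produces, around a point $x_k\in Q_2$ with $t_k=d(x_k,Q_1)\to\infty$, an $n$-chain of mass $\lesssim\rho_k^n$ with boundary on $Q_1$ and containing $x_k$, and then asserts that the ``quasi-minimising characterisation of Morse quasiflats'' forces every such chain into a uniform neighbourhood of $Q_1$. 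No such characterisation exists: quasi-minimality holds for \emph{all} quasiflats (Lemma~\ref{lem_quasiflat_are_quasiminimizers}), and the property that actually characterises Morseness, the coarse neck property (Definition~\ref{def_tcp}, Proposition~\ref{cor_small_neck}), does \emph{not} say that chains with boundary on $Q_1$ and Euclidean mass stay close to $Q_1$. For a Morse quasiflat of non-maximal rank one can attach to a disc in $Q_1$ a large bubble far from $Q_1$ by a thin tube; the resulting chain has boundary on $Q_1$, controlled mass, and points arbitrarily far from $Q_1$. The paper itself emphasises this in Section~\ref{subsec_proof}: chains near a Morse quasiflat ``might contain large bubbles'', and the CNP only guarantees that the \emph{neck} of such a bubble has small filling. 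So the final step of your contradiction does not follow, and the data you retained about the chain (mass bound, boundary on $Q_1$, one interior point far away) is genuinely insufficient to derive a contradiction.

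What makes the true argument work, and what your reduction throws away, is that $x_k$ lies on the quasiflat $Q_2$: the portion of $Q_2$ lying farther than $\sim t_k/2$ from $Q_1$, localised near $x_k$, cannot be cut off cheaply, because $Q_2$ carries lower density and lower filling bounds at scale $t_k$ (Lemma~\ref{lem_quasiflat_are_quasiminimizers}, Lemma~\ref{lem:fill-density}); on the other hand the coarse neck property of $Q_1$ says the boundary slice of that far portion has a filling of small mass. Playing these two quantitative bounds against each other (this is in essence what the cited results \cite[Proposition 10.4, Theorem 9.5]{HKS} encapsulate) is what yields a bound on $t_k$ in terms of the data of $Q_1$ (and, in the body version of the statement, of $Q_2$). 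Your sketch would need to be reorganised along these lines — keeping $Q_2$'s quasi-minimality in play rather than replacing $Q_2$ near $x_k$ by an arbitrary chain — before the constant-tracking issue you flag at the end even becomes the relevant difficulty. (A smaller technical point: the ``nearest-point'' projection onto $Q_1$ is not Lipschitz; one must use the Lipschitz quasi-retractions of Corollary~\ref{cor_quasiretraction} as the paper does.)
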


\begin{thm}[Theorem~\ref{thm_growth_rigidity body}]\label{thm_growth_rigidity}
 Let $X$ be a proper CAT(0) space. Let $Q\subset X$ be an $n$-dimensional Morse quasiflat.
Suppose that the volume growth of $Q$ is at most Euclidean.
 Then there is an $n$-flat $F\subset X$ with $d_H(F,Q)<C$ where $C$ depends only on $n,X$ and the Morse data of $Q$.
\end{thm}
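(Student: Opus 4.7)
The plan is to combine Theorem~\ref{prop_sublinear_close_intro} (asymptotic conicality) with Theorem~\ref{thm:uniqueness} (uniqueness), bridged by a volume-rigidity argument. The goal is to upgrade the bilipschitz equivalence between the Euclidean cone over $\partial_T Q$ (with the induced Tits metric) and $\mathbb E^n$ to an actual isometry, so that this cone embeds into $X$ as an honest $n$-flat $F$ with $\partial_T F = \partial_T Q$; Theorem~\ref{thm:uniqueness} applied to $Q$ and $F$ will then yield $d_H(F,Q) < C$ with the required parameter dependence.

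First, I would apply Theorem~\ref{prop_sublinear_close_intro} to produce $\partial_T Q \subset \partial_T X$, the sublinear Hausdorff estimate between $Q$ and the geodesic cone $\cone_p(\partial_T Q)$, and the bilipschitz equivalence of the Euclidean Tits cone over $\partial_T Q$ with $\mathbb E^n$. The at-most-Euclidean volume hypothesis on $Q$, combined with the sublinear closeness, transfers to a volume upper bound on balls in the Euclidean Tits cone. One concrete way to effect this transfer is to rescale $Q$ by $1/r$, pass to an asymptotic cone of $X$ (which remains CAT(0)), and identify the ultralimit of rescaled $Q$ with the Euclidean Tits cone over $\partial_T Q$; the upper bound on $\mathcal H^n$ then descends via lower semicontinuity of top-dimensional Hausdorff measure under ultralimits. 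On the other hand, the Tits cone is bilipschitz to $\mathbb E^n$ and hence $n$-dimensional; combining this with the Euclidean lower volume estimate for CAT(0) spaces of topological dimension $n$ (applied in the CAT(0) asymptotic cone of $X$) yields a matching lower bound.

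The equality case of this Bishop-Gromov-type volume comparison then promotes the bilipschitz equivalence to an actual isometry: the Euclidean cone over $\partial_T Q$ is isometric to $\mathbb E^n$, so it embeds into $X$ as an $n$-flat $F$ with $\partial_T F = \partial_T Q$, and Theorem~\ref{thm:uniqueness} completes the proof. The main obstacle is the volume-rigidity step itself: ensuring that the upper volume bound transferred to the Tits cone matches the sharp Euclidean constant, and then deducing an actual isometry from matching volumes combined with the bilipschitz equivalence, in the setting of CAT(0) subspaces of an asymptotic cone. The remaining reductions are largely formal once the structural results of Theorem~\ref{prop_sublinear_close_intro} and Theorem~\ref{thm:uniqueness} are in hand.
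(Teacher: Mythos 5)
Your overall outline (volume rigidity at infinity, then produce a flat asymptotic to $\partial_T Q$, then conclude by uniqueness/finite-distance) matches the paper's strategy, but there is a genuine gap at the decisive step: from ``the Euclidean cone over $\partial_T Q$ is isometric to $\E^n$'' you conclude that this cone ``embeds into $X$ as an honest $n$-flat $F$ with $\partial_T F=\partial_T Q$.'' That does not follow. The Tits cone lives at infinity (equivalently, in an asymptotic cone of $X$), not in $X$, and the geodesic cone $\cone_p(\partial_T Q)\subset X$ is in general not flat even when $\partial_T Q$ is a round sphere; a round $(n-1)$-sphere in $\partial_T X$ need not be the Tits boundary of any flat in $X$. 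The paper bridges exactly this point with Leeb's flat-filling result (\cite[Proposition 2.1]{leeb_rigidity}): a round sphere in $\partial_T X$ either bounds a hemisphere or is the Tits boundary of an $n$-flat, and the hemisphere alternative is excluded using the homological full-support/immovability of $\partial_T Q$ from Proposition~\ref{prop:full support}. Your proposal contains no substitute for this step, and without it the existence of $F$ is unproved.

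Two further points. First, the volume-rigidity step is not a generic Bishop--Gromov equality argument on a subset: the paper first shows that the ultralimit $Q_\om\subset X_\om$ (isometric to the Euclidean cone over $\partial_T Q$) is \emph{convex} in the CAT(0) space $X_\om$, using the Morse/full-support property via the geodesic extension Lemma~\ref{lem_geo_extension} inside Lemma~\ref{lem_euc_vol}, and only then applies Lemma~\ref{lem_bilip_CAT(0)} (a CAT(0) space bilipschitz to $\E^n$ with at most Euclidean growth is isometric to $\E^n$). Your appeal to a ``Euclidean lower volume estimate for CAT(0) spaces of topological dimension $n$'' applied to a subset of the asymptotic cone is not available without such convexity/extension input; the induced metric on $Q_\om$ is not known to be CAT(0) a priori, which you flag as the ``main obstacle'' but do not resolve. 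Second, a minor issue: Theorem~\ref{thm:uniqueness} requires both quasiflats to be Morse, so applying it to $Q$ and $F$ needs an argument that $F$ is Morse; the paper instead notes that $Q$ and $F$ are sublinearly close (both are sublinearly Hausdorff-close to $\cone_p(\partial_T Q)$) and concludes finite Hausdorff distance directly from \cite[Proposition 10.4 and Theorem 9.5]{HKS}. This last point is fixable, but the flat-filling step is a genuine missing ingredient.
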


\bigskip
\subsection{ Immediate consequences and further discussion} 
In this subsection we point out some settings where Morse quasiflats arise naturally and one may apply our main theorem to produce new quasi-isometry invariants.   The resulting information may potentially be used to deduce quasi-isometric rigidity and classification results; as the methods for doing this are usually more combinatorial in flavor and strongly depend on the specific setting, we will not treat this aspect here, except one case which we postpone to the appendix (as the main focus of the paper is on the metric aspects of Morse quasiflats).

Theorem~\ref{thm_sublinear_close_intro} and Theorem~\ref{thm:uniqueness} reduce the study of Morse quasiflats to the study of certain cycles in the Tits boundary, which we call immovable cycles as in Definition~\ref{morse}. Once the combinatorial structure of these cycles is understood, 
one obtains structural results of Morse quasiflats in the space. This gives combinatorial invariants for quasi-isometries as Morse quasiflats are quasi-isometry invariants. 

For example, for a Morse quasiflat $Q$ in a CAT(0) cube complex, the immovable cycle $\partial_T Q$ can be ``filled'' by another quasiflat $Q'$ which is a union of orthants in the sense of \cite[Theorem 1.4]{boundary} \footnote{A more general version of \cite[Theorem 1.4]{boundary} has been obtained recently by \cite{FFH}}. By Theorem~\ref{thm_sublinear_close_intro}, $Q'$ and $Q$ are sublinearly close. Hence $Q$ and $Q'$ have finite Hausdorff distance by \cite[Proposition 10.4]{HKS}, which implies the following.

\begin{theorem}
	\label{thm:cube Morse lemma}
Suppose $X$ is a finite dimensional proper $\cat(0)$ cube complex. If $Q\subset X$ is a $k$-dimensional Morse quasiflat, then there exists a collection of pairwise disjoint $k$-dimensional $\cat(0)$ orthants 
$\{O_i\}_{i=1}^k$ such that $d_H(Q,\sqcup_{i=1}^k O_i)<\infty$.

If $Q$ is pointed Morse, then there exists a collection of pairwise disjoint $k$-dimensional $\cat(0)$ orthants $\{O_i\}_{i=1}^k$ such that $Q$ and $\sqcup_{i=1}^k O_i$ are sublinearly close in the sense of 
Theorem~\ref{thm_sublinear_close_intro}.

Moreover, in each of the above cases, the $\cat(0)$ orthants are at finite Hausdorff distance from some $\ell^1$-orthants.  
\end{theorem}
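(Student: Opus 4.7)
The plan is to combine the asymptotic conicality of Theorem~\ref{prop_sublinear_close_intro} with the structure theorem \cite[Theorem 1.4]{boundary} on immovable homology classes in Tits boundaries of CAT(0) cube complexes, and then to promote the result from the pointed Morse case to the Morse case using the uniqueness Theorem~\ref{thm:uniqueness}.

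First I apply Theorem~\ref{prop_sublinear_close_intro} to the $k$-dimensional (pointed) Morse quasiflat $Q$. This produces a subset $\partial_T Q\subset\partial_T X$ which, by conclusion (2), is the support of an immovable class in $\tilde H_{k-1}(\partial_T X)$, together with the sublinear estimate
\[
d_H\bigl(Q\cap B_p(R),\;\cone_p(\partial_T Q)\cap B_p(R)\bigr)\le\delta(r)\cdot R
\]
for all $R\ge r$. Thus the task reduces to identifying $\partial_T Q$ and the cone $\cone_p(\partial_T Q)$ inside the Tits boundary of the cube complex.

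Next I invoke \cite[Theorem 1.4]{boundary}: for a finite-dimensional proper CAT(0) cube complex $X$, the support of an immovable class in $\tilde H_{k-1}(\partial_T X)$ is a disjoint union of Tits-$(k-1)$-spheres, each arising as $\partial_T O_i$ for a $k$-dimensional CAT(0) orthant $O_i\subset X$ that sits at bounded Hausdorff distance from an $\ell^1$-orthant along hyperplane normals. Choosing representatives with a common apex (or otherwise separated), one obtains a finite pairwise disjoint family $\{O_i\}$ with $\partial_T Q=\bigsqcup_i\partial_T O_i$ and such that $\cone_p(\partial_T Q)$ is at bounded Hausdorff distance from $\sqcup_i O_i$. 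Substituting this into the sublinear estimate above yields the pointed Morse conclusion, and the $\ell^1$-comparison statement is carried along from the cited theorem.

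For the Morse (non-pointed) case, I observe that $\sqcup_i O_i$ is itself a Morse $k$-quasiflat with $\partial_T(\sqcup_i O_i)=\partial_T Q$, being a finite disjoint union of standard CAT(0) orthants at sublinear distance from the Morse quasiflat $Q$. Theorem~\ref{thm:uniqueness} then upgrades the sublinear bound to a uniform Hausdorff bound $d_H(Q,\sqcup_i O_i)<\infty$. The main obstacle I anticipate is extracting \cite[Theorem 1.4]{boundary} in precisely the form required here -- in particular the pairwise disjointness of the $O_i$ and the bounded-distance $\ell^1$ comparison -- and verifying that $\sqcup_i O_i$ fits the Morse quasiflat hypotheses so that Theorem~\ref{thm:uniqueness} may be applied; both steps should amount to routine unpacking once the cited structure theorem is in hand.
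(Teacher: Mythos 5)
Your proposal is correct and follows essentially the same route as the paper, which derives this theorem precisely by combining the asymptotic conicality and immovability statements of Theorem~\ref{prop_sublinear_close_intro}, the structure theorem \cite[Theorem 1.4]{boundary} for immovable classes in Tits boundaries of $\cat(0)$ cube complexes, and the uniqueness Theorem~\ref{thm:uniqueness} to pass from sublinear closeness to bounded Hausdorff distance in the Morse case. The caveats you flag at the end (disjointness of the $O_i$, the $\ell^1$ comparison, and checking that $\sqcup_i O_i$ qualifies for Theorem~\ref{thm:uniqueness}) are exactly the routine unpacking the paper leaves implicit.
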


An $\ell^1$-orthant is a subcomplex (with induced $\ell^1$ metric from $X$) isometric to a standard Euclidean orthant equipped with the $\ell^1$-distance through a cubical isomorphism.	
In both cases of Theorem~\ref{thm:cube Morse lemma}, each $O_i$ is contained in a convex subcomplex $O'_i$ of $X$ such that $O'_i$ splits as a product of $k$ cubical factors ($k=\dim Q$). 
In the first case of Theorem~\ref{thm:cube Morse lemma}, $O'_i$ and $O_i$ have finite Hausdorff distance, in the second case $O'_i$ and $O_i$ are sublinearly close.

		Special cases of Theorem~\ref{thm:cube Morse lemma} for top dimensional quasiflats were obtained in \cite{bks1, huang_quasiflat,behrstock2017quasiflats,bowditch2019quasiflats} by different methods. $\cat(0)$ cube complexes typically contain plenty of Morse quasiflats which are neither 1-dimensional nor of top rank.

Combining Theorem~\ref{thm:cube Morse lemma} with the argument in \cite[Section 5]{huang_quasiflat}, we obtain the following ``Morse lemma'' for Morse \emph{flats}, which gives new quasi-isometric invariants for virtually compact special groups.

\begin{thm}
	\label{thm:morse lemma}
Let $ X_1$ and $ X_2$ be universal covers of compact special cube complexes $\bar X_1$, $\bar X_2$, respectively. If $q:X_1\to X_2$ is an $(L,A)$-quasi-isometry, then for any Morse flat $F_1\subset X_1$, there exists a Morse flat $F_2\subset X_2$ such that $d_H(q(F_1),F_2)<C$ where $C<\infty$ depends only on $X_1,X_2,L,A$ and the Morse data of $F_1$.
\end{thm}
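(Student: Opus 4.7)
The plan is to push the Morse flat $F_1$ across $q$, apply the structural Theorem~\ref{thm:cube Morse lemma} to the image in $X_2$, and then invoke the specialness-based orthant-gluing argument of \cite[Section 5]{huang_quasiflat} to upgrade the resulting disjoint union of $\ell^1$-orthants into an honest flat. By the quasi-isometry invariance of Morse quasiflats proved in Part~I (\cite{HKS}), the image $q(F_1)\subset X_2$ is a Morse $k$-quasiflat whose quasi-isometry constants and Morse data are controlled in terms of $L$, $A$, and the Morse data of $F_1$. Applying Theorem~\ref{thm:cube Morse lemma} produces a finite pairwise disjoint family of $\ell^1$-orthants $\{O_j\}\subset X_2$ with
\[ d_H\bigl(q(F_1),\,\sqcup_j O_j\bigr) < C_1, \]
where $C_1$ depends only on $X_2$, $L$, $A$, $\dim F_1$, and the Morse data of $F_1$; the task thus reduces to constructing a Morse $k$-flat $F_2\subset X_2$ at uniformly bounded Hausdorff distance from $\sqcup_j O_j$.

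Because $F_1$ is a genuine $k$-flat, $\partial_T F_1$ is an isometrically embedded $(k-1)$-sphere in $\partial_T X_1$. Combining Theorem~\ref{thm:uniqueness} with the quasi-isometry invariance of the Tits boundary of a Morse quasiflat, the Tits boundary $\partial_T q(F_1)=\bigcup_j \partial_T O_j$ is likewise a $(k-1)$-sphere in $\partial_T X_2$, tiled by the spherical simplices $\partial_T O_j$. This tiling encodes the combinatorial incidence pattern of the hyperplane parallelism classes defining the $O_j$: adjacent orthants share all but one defining class, and the full incidence data is inherited from the flat $F_1\subset X_1$.

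Specialness of $X_2$ now enters. Each $\ell^1$-orthant in $X_2$ is spanned by a family of $k$ pairwise-crossing hyperplanes, and because $X_2/\pi_1$ is special, hyperplanes do not self-intersect, self-osculate, or inter-osculate. Under these conditions the combinatorial compatibility above forces the $\{O_j\}$ to assemble into a convex $k$-dimensional subcomplex of $X_2$ at finite Hausdorff distance; this is exactly the orthant-gluing step in \cite[Section 5]{huang_quasiflat}. One thereby produces a compact local model inside $X_2/\pi_1$ whose universal cover in $X_2$ is a $k$-dimensional flat $F_2$ with $d_H(F_2,\sqcup_j O_j)<C_2$, and hence $d_H(q(F_1),F_2)<C_1+C_2$.

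The principal obstacle to anticipate is uniformity of $C=C_1+C_2$. Uniformity of $C_1$ is built into Theorem~\ref{thm:cube Morse lemma} combined with the quantitative Morse-data control from Part~I. Uniformity of $C_2$ rests on cocompactness of the deck action on $X_2$ together with the finiteness of combinatorial types of $k$-tuples of pairwise-crossing hyperplanes in the compact quotient $X_2/\pi_1$; this bounds both the size of the local model used in the gluing step and the Morse data of the resulting flat $F_2$ in terms of $X_2$, $L$, $A$, and the Morse data of $F_1$.
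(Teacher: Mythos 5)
Your proposal matches the paper's proof, which is precisely the stated combination of Theorem~\ref{thm:cube Morse lemma} (applied to $q(F_1)$, a Morse quasiflat by the quasi-isometry invariance established in Part~I) with the orthant-gluing argument of \cite[Section 5]{huang_quasiflat}, concluded via the uniqueness statement to get a uniform constant. The only minor imprecision is that the sphere structure on $\partial_T q(F_1)$ comes from Theorem~\ref{prop_sublinear_close_intro}(1) rather than from any ``quasi-isometry invariance of the Tits boundary'' (the Tits boundaries of $X_1$ and $X_2$ are not identified by $q$), but this does not affect the argument.
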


In the setting of the theorem, if $A\subset \pi_1(\bar X_1)$ is a free abelian subgroup not virtually contained in a higher rank free abelian subgroup, then $A$ is Morse \cite[Corollary 1.20]{HKS}, hence $q(A)$ is at finite Hausdorff distance from a flat by Theorem~\ref{thm:morse lemma}. This also implies for \emph{any} abelian subgroup $A'\subset \pi_1(\bar X_1)$, the image $q(A')$ is contained in a finite neighborhood of a flat.

Now we mention one consequence of Theorem~\ref{thm:morse lemma}. Recall from \cite{bks2} that a simplicial graph $\Gamma$ is \emph{atomic} if $\Gamma$ is connected, does not have $n$-cycle with $n<5$ and does not contain any vertex $v$ such that $\st(v)$ separates $\Gamma$. The main result of \cite{bks2} may be rephrased as the assertion that two graph products of $\mathbb Z$s with atomic defining graphs are quasi-isometric if and only if the underlying graphs are isomorphic.  We now prove the analogous assertion for graph products of arbitrary rank one right-angled Coxeter groups. As different vertex groups of the graph products might contain flats of different dimension, the usual strategy of controlling quasi-isometric images of top-dimensional quasiflats is less effective. Instead we use 2-dimensional Morse flats coming from products of rank one axes in the vertex groups.

\begin{corollary}[Corollary~\ref{cor_qi_classification}]
	\label{cor_qi_classification_intro}
	Let $\Gamma_1$ and $\Gamma_2$ be atomic graphs. Let $G_{\Gamma_1}$ and $H_{\Gamma_2}$ be two graph products with vertex groups being rank one right-angled Coxeter groups. Then $G_{\Gamma_1}$ and $H_{\Gamma_2}$ are quasi-isometric if and only if there exists a graph isomorphism $f:\Gamma_1\to \Gamma_2$ such that $G_{v}$ is quasi-isometric to $H_{f(v)}$ for any $v\in\Gamma_1$.
\end{corollary}

The assumptions here are not intended to be optimal -- we only present a relatively simple case to illustrate the idea.  We expect that the rigidity assertion in  Corollary~\ref{cor_qi_classification_intro} holds in greater generality, see Remark~\ref{rmk_graph_product}.

We end this section with a few more natural speculations. The following is an analog of Theorem~\ref{thm:cube Morse lemma} and Theorem~\ref{thm:morse lemma} for Coxeter groups.

\begin{conjecture}
Morse quasiflats in the Davis complexes of Coxeter groups are at finite Hausdorff distance from a union of $\cat(0)$ orthants in the sense of Theorem~\ref{thm:cube Morse lemma}. 
Moreover, Theorem~\ref{thm:morse lemma} holds when $X$ and $Y$ are the Davis complexes of some Coxeter groups.
\end{conjecture}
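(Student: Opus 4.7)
The plan is to imitate the scheme that worked for CAT(0) cube complexes (Theorem~\ref{thm:cube Morse lemma} and Theorem~\ref{thm:morse lemma}), replacing hyperplanes and $\ell^1$-orthants by the corresponding combinatorial objects in the Davis complex: walls, residues, and ``standard'' subcomplexes coming from parabolic subgroups of Euclidean type. Let $X$ be the Davis complex of a Coxeter group $W$ and let $Q\subset X$ be an $n$-dimensional Morse quasiflat. Theorem~\ref{prop_sublinear_close_intro} reduces the problem to understanding $\partial_T Q$ as an immovable class in $\tilde H_{n-1}(\partial_T X)$, and Theorem~\ref{thm:uniqueness} will then upgrade any candidate model for $Q$ to bounded Hausdorff distance. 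Thus the crux is a Coxeter-theoretic analogue of \cite[Theorem 1.4]{boundary}.

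The central step is to show that every immovable class in $\tilde H_{n-1}(\partial_T X)$ is supported on a finite union of ``standard'' Tits spheres $\partial_T E_i$, where each $E_i$ is a convex subcomplex of $X$ isometric to a Euclidean space arising as the Davis realization of a Euclidean parabolic residue. To prove this I would exploit the well-studied structure of $\partial_T X$: it has a stratification by links of Euclidean residues, and the Tits metric is controlled by the poset of spherical and affine standard subgroups. Using a wall-based pushing argument analogous to the cubical case, one can show that any cycle representative can be moved off of non-Euclidean strata, and then recognize the resulting ``tight'' cycle as a sum of fundamental classes of standard spheres. With this classification in hand, cone over each $\partial_T E_i$ carves out $E_i$, which decomposes along its walls into finitely many CAT(0) orthants; picking the correct orthants recovers a pointed model for $Q$, and Theorem~\ref{thm:uniqueness} converts sublinear closeness to finite Hausdorff distance.

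For the second part of the conjecture, given a quasi-isometry $q:X\to Y$ between Davis complexes and a Morse flat $F_1\subset X$, the image $q(F_1)$ is a Morse quasiflat by \cite{HKS}, so the previous step produces a finite union of CAT(0) orthants $\bigsqcup O_i\subset Y$ at bounded Hausdorff distance from $q(F_1)$. One must then show that because $F_1$ was already a flat, the orthants $\{O_i\}$ fit together into a single Morse flat $F_2\subset Y$; this should follow from the top-rank/volume-growth rigidity statement Theorem~\ref{thm_growth_rigidity} applied to each Euclidean residue containing an $O_i$, together with a combinatorial convex hull argument in the spirit of \cite[Section 5]{huang_quasiflat} adapted from cubes to Davis cells. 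The main obstacle, in my view, is the classification of immovable cycles: the Tits boundary of a Davis complex is considerably more intricate than that of a cube complex, with higher-dimensional strata coming from higher-rank Euclidean residues, and the wall structure that made the cubical pushing argument clean must be replaced by a more delicate induction over the rank and type of standard subgroups.
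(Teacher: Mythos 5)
You should note at the outset that the statement you are proving is stated in the paper as a conjecture: the authors give no proof, and the reason is precisely the step you label ``the central step.'' Your plan reduces everything to a classification of immovable classes in $\tilde H_{n-1}(\partial_T X)$ for $X$ a Davis complex, i.e.\ a Coxeter analogue of \cite[Theorem 1.4]{boundary}, but you do not actually supply an argument for it. The ``wall-based pushing argument analogous to the cubical case'' is exactly where the cubical proof uses structure that Davis complexes lack: the halfspace/median combinatorics and the $\ell^1$-orthant structure that make the support of an immovable cycle decompose into Tits boundaries of cubical orthants have no ready substitute in a general piecewise-Euclidean Coxeter cell complex, and the Tits boundaries of Coxeter groups are not understood at the level needed to ``move a cycle off the non-Euclidean strata'' or to recognize a tight cycle as a sum of fundamental classes of standard spheres. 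Asserting this classification is restating the conjecture, not proving it; everything downstream (coning over $\partial_T E_i$, cutting $E_i$ into orthants, invoking Theorem~\ref{thm:uniqueness} to get finite rather than sublinear distance) is conditional on it. Note also that even granting the classification, the upgrade from sublinear closeness to finite Hausdorff distance is not automatic: in the cubical case it uses the specific structure of the support (disjoint orthant boundaries) together with Theorem~\ref{thm:uniqueness}, and you would need to verify the analogous finiteness for the Coxeter models.

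There is a second concrete gap in your treatment of the Morse lemma part. You propose to apply Theorem~\ref{thm_growth_rigidity} to conclude that the orthants assemble into a single flat $F_2$, but that theorem requires the quasiflat to have at most Euclidean volume growth ($\Theta_\infty(T)\le\omega_n$), and the image $q(F_1)$ of a flat under an $(L,A)$-quasi-isometry only satisfies a growth bound of the form $L^n\omega_n$; so the hypothesis fails in general and the rigidity statement cannot be invoked this way. In the cubical case the passage from the orthant decomposition to an honest flat $F_2$ is done by the combinatorial convex-hull argument of \cite[Section 5]{huang_quasiflat}, which relies on hyperplanes and product splittings of cubical convex hulls; adapting that machinery to Davis complexes is itself open and is part of what the conjecture asks. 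In short, your proposal is a reasonable roadmap and correctly identifies where the difficulty lies, but both of its load-bearing steps (the classification of immovable cycles in $\partial_T X$ and the assembly of the orthant model into a flat) are asserted rather than proved, so it does not establish the statement.
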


Another potentially interesting case is provided by Artin groups of type FC. They act geometrically on injective metric spaces \cite{huang2019helly}, 
hence Corollary~\ref{cor_sublinear_close} and Theorem~\ref{thm:uniqueness body} apply. Moreover, they contain plenty of Morse quasiflats.

\bigskip
\subsection{Structure of the paper}
Section~\ref{sec_prelim} - Section~\ref{sec_definition} are preparatory in nature. 
In Section~\ref{sec_prelim} we discuss some background on metric spaces and metric currents and agree on notation. 
In Section~\ref{sec:quasiflats in metric spaces} we prove some properties of quasiflats specific to metric spaces with convex geodesic bicombings, including the representability by Lipschitz quasiflats and 
the existence of Lipschitz retractions.

In Section~\ref{sec_informal_proof} we give an informal discussion on the properties of Morse quasiflats as well as the proof of the main result Theorem~\ref{thm_sublinear_close_intro}. 
In Section~\ref{sec_definition}, we provide precise definitions of Morse quasiflats and recall several essential features like the coarse neck property and the coarse piece property.

In Section~\ref{sec_visibility_morse_quasiflats} we prove our main structural results: asymptotic conicality, visibility and uniqueness of tangent cones at infinity for Morse quasiflats. In Section~\ref{sec_rigidity}, we prove a rigidity result for Morse quasiflats with Euclidean mass growth. 

In the appendix Section~\ref{sec_appendix}, we use Theorem~\ref{thm_sublinear_close_intro} to exhibit some examples of quasi-isometric classification.

\subsection{Acknowledgements}
We would like to thank Sam Shepherd for comments on an earlier version of this paper. We thank Bernhard Leeb for asking a question which led to Example~\ref{example_exp}. We also thank the anonymous referee for valuable comments.

\section{Preliminaries}
\label{sec_prelim}

\subsection{Metric notions} \label{subsect:metric}
\mbox{}
We will denote by $\mathbb E^n$ the $n$-dimensional Euclidean space with its flat metric and by $\mathbb S^{n-1}$ the $(n-1)$-dimensional round unit sphere.

Let $X = (X,d)$ be a metric space. For $\lambda>0$, we denote the rescaled space $(X,\lambda\cdot d)$ simply by $\lambda\cdot X$. 
We write 
\[
\B{p}{r} := \{x \in X : d(p,x) \le r\}, \quad
\Sph{p}{r} := \{x \in X : d(p,x) = r\}
\] 
for the closed ball and sphere with radius $r \ge 0$ and center $p \in X$.

A map $f \colon X \to Y$ into another metric space $Y = (Y,d)$ is
{\em $L$-Lipschitz}, for a constant $L \ge 0$, 
if for all $x,x' \in X$ holds
\[d(f(x),f(x')) \le L\cdot d(x,x').\]

A map $f \colon X \to Y$ between two metric spaces is called an
{\em $(L,A)$-quasi-isometric embedding}, for constants
$L \ge 1$ and $A \ge 0$, if
\[
\frac{1}{L}\cdot d(x,x') - A \le d(f(x),f(x')) \le L\cdot d(x,x') + A
\]
for all $x,x' \in X$.
A {\em quasi-isometry} $f \colon X \to Y$ has the additional property
that $Y$ is within finite distance of the image of $f$. An \emph{$(L,A)$-quasi-disk} $D$ in a metric space $X$ is the image of an $(L,A)$ quasi-isometric embedding 
$\Phi$ from a closed metric ball $B$ in $\R^n$ to $X$.
The \emph{boundary} of $D$, denoted $\D D$, 
is defined to be $\Phi(\D B)$. An $n$-dimensional {\em quasiflat} in $X$ is the image of a quasi-isometric
embedding of $\R^n$.

A curve $\rho \colon I \to X$ defined on some interval $I \sub \R$ is a
{\em geodesic} if there is a constant $s \ge 0$, the {\em speed}
of $\rho$, such that $d(\rho(t),\rho(t')) = s |t - t'|$ for all $t,t' \in I$.
A geodesic defined on $I = \R_+ := [0,\infty)$ is called a {\em ray}.

\subsection{Metric spaces with convex geodesic bicombing}
\label{subsec:bicombing}
\begin{definition}[convex bicombing] \label{def:bicombing}
	By a {\em convex bicombing} $\si$ on a metric space $X$ 
	we mean a map $\si \colon X \times X \times [0,1] \to X$ such that
	\ben
	\item   
	$\si_{xy} := \si(x,y,\cdot) \colon [0,1] \to X$ is a geodesic from
	$x$ to $y$ for all $x,y \in X$;
	\item
	$t \mapsto d(\si_{xy}(t),\si_{x'y'}(t))$ is convex on $[0,1]$
	for all $x,y,x',y' \in X$;
	\item
	$\im(\si_{pq}) \sub \im(\si_{xy})$ whenever $x,y \in X$ and
	$p,q \in \im(\si_{xy})$.
	\een
	A geodesic $\rho \colon I \to X$ is then called a
	{\em $\si$-geodesic} if $\im(\si_{xy}) \sub \im(\rho)$ whenever
	$x,y \in \im(\rho)$.
	A convex bicombing $\si$ on $X$ is {\em equivariant\/} if
	$\ga \circ \si_{xy} = \si_{\ga(x)\ga(y)}$ for every
	isometry $\ga$ of $X$ and for all $x,y \in X$.
\end{definition}

Note that in~(3), we do not specify the order of $p$ and $q$ with respect to
the parameter of $\si_{xy}$, in particular $\si_{yx}(t) = \si_{xy}(1-t)$.
In the terminology of~\cite{DesL1}, $\si$ is a {\em reversible} and
{\em consistent convex geodesic bicombing} on $X$.
In Section~10.1 of~\cite{kleiner1999local}, metric spaces with such a structure $\si$
are called {\em often convex}. 
This class of spaces includes all CAT(0) spaces, Busemann spaces,  
as well as (linearly) convex subsets of normed spaces; at the same time, 
it is closed under various limit and product constructions such as 
ultralimits, (complete) Gromov--Hausdorff limits, and $l_p$ products 
for $p \in [1,\infty]$.

Let $X$ be a complete metric space with a convex bicombing $\si$.
The boundary at infinity of $(X,\si)$ is defined in the usual way, as for
CAT(0) spaces, except that only $\si$-rays are taken into account.
Specifically, we let $\Ray^\si X$ and $\Ray^\si_1 X$ denote the sets of
all $\si$-rays and $\si$-rays of speed one, respectively, in $X$.
For every pair of rays $\rho,\rho' \in \Ray^\si X$, the function
$t \mapsto d(\rho(t),\rho'(t))$ is convex, and $\rho$ and $\rho'$
are called {\em asymptotic} if this function is bounded.
This defines an equivalence relation $\sim$ on $\Ray^\si X$ as well as
on $\Ray^\si_1 X$.
The {\em boundary at infinity} or {\em visual boundary} of $(X,\si)$ is 
the set
\[
\di X := \Ray^\si_1 X/{\sim}
\]
Given $\rho \in \Ray^\si_1 X$ and $p \in X$, there is a unique ray 
$\rho_p \in \Ray^\si_1 X$ asymptotic to $\rho$ with $\rho_p(0) = p$. The set
\[
\olX := X \cup \di X
\]
carries a natural metrizable topology,
analogous to the cone topology for CAT(0) spaces. With this topology,
$\olX$ is a compact absolute retract, and $\di X$ is a $Z$-set
in $\olX$. See Section~5 in~\cite{DesL1} for details.
For a subset $A \sub X$, the \emph{ideal boundary} of $A$, denoted by $\di A$, is defined as the set of
all points in $\di X$ that belong to the closure of $A$ in~$\olX$.
For a point $p \in X$ we define the geodesic homotopy
\[
h_p \colon [0,1] \times X \to X
\]
by $h_p(\la,x) := h_{p,\la}(x) := \si_{px}(\la)$. Note that
the map $h_{p,\la} \colon X \to X$ is $\la$-Lipschitz. 
For a set $A \sub X$,
\[
\cone_p(A) := h_p([0,1] \times A)
\]
denotes the geodesic cone from $p \in X$ over $A$, and $\ccone_p(A)$
denotes its closure in $X$. Similarly, if $\Lambda \sub \di X$, then 
$\cone_p(\Lambda) \sub X$ denotes the union of the traces of the rays emanating 
from $p$ and representing points of $\Lambda$.

The \emph{Tits cone} of $(X,\si)$ is defined as the set 
\[
\CT X := \Ray^\si X/{\sim},
\]
equipped with the metric given by 
\[
\dT([\rho],[\rho']) := \lim_{t \to \infty} \frac{1}{t}\,d(\rho(t),\rho'(t)).
\]
Note that $t \mapsto d(\rho(t),\rho'(t))$ is convex,
thus $t \mapsto d(\rho(t),\rho'(t))/t$ is non-decreasing if $\rho,\rho'$
are chosen such that $\rho(0) = \rho'(0)$. From this it is easily 
seen that $\CT X$ is complete.

\begin{definition}
	\label{def_exp}
For each base point $p\in X$, we can define an exponential map $\exp_p:\CT X \to X$ sending the class $[\rho]\in \Ray^\si X$ to $\rho_p(1)$ where $\rho_p$ is the unique $\si$-ray in $\Ray^\si X$ asymptotic to $\rho$ with $\rho_p(0)=p$. Note that $\exp_p$ is 1-Lipschitz.
\end{definition}

The {\em Tits boundary} of $(X,\si)$ is the unit sphere
\[
\D_T X := \Sph{o}{1} = \Ray^\si_1 X/{\sim}
\]
in $\CT X$, endowed with the topology induced by $\dT$. This topology
is finer than the cone topology on the visual boundary $\di X$, 
which agrees with $\D_T X$ as a set. 

\begin{definition}
	\label{def_Tits_boundary}
For a subset $A\subset X$ we define the {\em Tits boundary} $\D_T A$ of $A$ as the collection points in $\partial_T X$ represented by geodesic ray $\rho$ such that there exists a sequence $(x_k)$ in $\rho$ such that $d(x_k,\rho(0))\to\infty$ and 
\[ \lim\limits_{k\to\infty}\frac{d(x_k,A)}{d(x_k,\rho(0))}=0.\]
\end{definition}

Note that for closed subsets $\Lambda\subset \di X$ holds $\di \cone_p(\Lambda)=\D_T \cone_p(\Lambda)=\Lambda$. Also $\partial_T A$ could possibly be empty even if $A$ is unbounded.

\subsection{Local currents in proper metric spaces} 
\label{subsect:currents}
\mbox{}

We will use the theory of (metric) integral currents throughout.
The reader will find an overview of what is needed in \cite{higherrank} and \cite{HKS} while we refer to \cite{ambrosio2000currents} and \cite{Lan3} for a thorough treatment.
Here we will only agree on notation.

We denote the space of {\em $n$-dimensional locally integral currents} by $\I_{n,loc}(X)$.
We write $\bI_{n,\cs}(X)$ (resp. $\bI_n(X)$) for the respective subgroups of
{\em integral currents\/} with compact support (resp. with finite mass). The corresponding subgroups of {\em cycles} are denoted by $\bZ_{n,loc}(X)$ and
$\bZ_{n,\cs}(X)$ (resp. $\bZ_n(X)$).
Let $T$ be a current on a proper metric space $X$. Then we denote by $\D T$ its {\em boundary}; by $\|T\|$ its associated Radon measure;  by $\M(T)=\|T\|(X)$ its {\em mass} and by $\spt(T)$ its support.
For a Lipschitz map $f:X\to Y$ to another proper metric space,  we denote by $f_\# T$ the {\em push-forward} of $T$ by $f$. A current $T'$ is called a {\em piece} of $T$ if
$\|T\|=\|T-T'\|+\|T'\|$ holds and the corresponding decomposition is called a {\em piece decomposition}. For a Borel subset $B\subset X$, let $T\on B$ be the \emph{restriction} of $T$ to $B$. Then $T\on B$ is a piece of $T$. 

If $\varphi:X\to\R$ is an $L$-Lipschitz function, then for almost every real number $s$  the {\em slice} of $T$ by $\varphi$ at $s$ is defined as
\[\slc{ T,\varphi, s}:=(\D T)\on\{\varphi>s\}-\D(T\on\{\varphi>s\}).\]
 We recall the {\em coarea inequality} which we will use intensively throughout.
For every Borel subset $B\subset\R$ holds
\[
\int_B \M(\slc{T,\varphi,s}) \,ds \le L\cdot\|S\|(\varphi^{-1}(B)).
\]

Recall that every function
$w \in L^1_\loc(\R^n)$ induces a current 
$\bb{w}$ defined by
\[
\bb{w}(\pi_0,\dots,\pi_n) 
:= \int_{\R^n} w\cdot \pi_0\cdot\det\bigl[\partial_j\pi_i\bigr]_{i,j = 1}^n \,dx
\]
For a characteristic function $\chi_W$ of a Borel set $W \sub \R^n$,
we put $\bb{W} := \bb{\chi_W}$.
(See Section~2 in~\cite{Lan3} for details.) 

If $T\in \bI_{N,\loc}(\R^N)$, then $T=\bb{u}$ for some function $u$ of locally bounded variation, moreover $u$ is integer-valued almost everywhere \cite[Theorem 7.2]{Lan3}. 
The element $\bb{\R^N}\in \bI_{N,\loc}(\R^N)$ is called \emph{fundamental class} of $\R^n$.

\begin{lemma}[coning inequality]\label{lem:coning_inequality}
Let  $X$ be a complete metric space with a convex geodesic bicombing. Then every cycle 
	$S \in \bZ_{n-1}(X)$ possesses for every point $p\in X$ a conical filling $\cone_p(S) \in \bI_n(X)$ with $\spt(\cone_p(S))\subset\cone_p(\spt(S))$ called {\em cone from p over $S$}. If $\spt(S)\subset B_p(R)$,
	then $\cone_p(S)$ fullfills the coning inequality
	\[\M(\cone_p(S))\leq R\cdot\M(S).\]
 
\end{lemma}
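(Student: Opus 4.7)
The plan is to define the cone current as the pushforward of a product current under the geodesic homotopy $h_p$ introduced in Section~\ref{subsec:bicombing}, and to extract all three conclusions (integrality, support, and mass bound) from this single description. Concretely, I would set
\[
\cone_p(S) := (h_p)_{\#}\bigl(\bb{[0,1]}\times S\bigr),
\]
where $\bb{[0,1]}\times S$ denotes the product integral current in $[0,1]\times X$ in the sense of Ambrosio--Kirchheim. Since $\spt(S)\subset B_p(R)$ and $h_p$ is Lipschitz on $[0,1]\times B_p(R)$, this pushforward is a well-defined element of $\bI_n(X)$, and the support inclusion $\spt(\cone_p(S))\subset h_p([0,1]\times\spt(S))=\cone_p(\spt(S))$ is immediate from the definition of pushforward.

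Next, I would verify $\partial\cone_p(S)=S$, which makes $\cone_p(S)$ a filling. Using the boundary formula for product currents together with $\partial S=0$,
\[
\partial\bigl(\bb{[0,1]}\times S\bigr)=\bb{\{1\}}\times S-\bb{\{0\}}\times S.
\]
Pushing forward, $h_{p,1}=\id$ sends the first summand to $S$, while $h_{p,0}$ is the constant map at $p$, so the second summand pushes to zero (it is degenerate in positive dimensions; for $n=1$ it vanishes since the $0$-cycle $S$ has total weight zero).

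The core of the argument is the mass estimate, and here I would exploit convexity of the bicombing twice. On the one hand, for each fixed $\lambda\in[0,1]$ the slice map $h_{p,\lambda}$ is $\lambda$-Lipschitz, because
\[
d(\si_{px}(\lambda),\si_{px'}(\lambda))\le(1-\lambda)\,d(p,p)+\lambda\,d(x,x')=\lambda\,d(x,x').
\]
On the other hand, for each fixed $x\in\spt(S)$ the curve $\lambda\mapsto\si_{px}(\lambda)$ has speed $d(p,x)\le R$, so it is $R$-Lipschitz on $\spt(S)$. The anisotropic Jacobian/area estimate for pushforwards of product currents then gives an $n$-dimensional Jacobian bound $R\cdot\lambda^{n-1}$, so that
\[
\M(\cone_p(S))\le\int_0^1 R\cdot\lambda^{n-1}\,d\lambda\cdot\M(S)=\frac{R}{n}\M(S)\le R\cdot\M(S).
\]

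The main technical obstacle is precisely this last mass bound: the naive bound $\M(f_{\#}T)\le\Lip(f)^n\M(T)$ is far too crude, as it would replace the factor $R$ by $R^n$ on the right-hand side. One really has to use the anisotropic Lipschitz information together with the product structure of $\bb{[0,1]}\times S$. If the product Jacobian formalism is inconvenient in the reference framework being used, an equivalent route is to slice $\cone_p(S)$ by the function $d(p,\cdot)$: the slice at radius $t$ is essentially $(h_{p,t/R})_{\#}S$ of mass at most $(t/R)^{n-1}\M(S)$, and the coarea inequality (recalled just above the lemma) recovers the same bound $\tfrac{R}{n}\M(S)\le R\cdot\M(S)$ after integrating $t$ from $0$ to $R$.
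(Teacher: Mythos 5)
Your construction is correct and is essentially the paper's own argument: the paper proves this lemma by citing Section~2.3 of Wenger's isoperimetric-inequality paper, which builds $\cone_p(S)$ exactly as $(h_p)_{\#}(\bb{[0,1]}\times S)$ and gets the mass bound $\tfrac{R}{n}\M(S)\le R\cdot\M(S)$ from the anisotropic homotopy estimate ($h_{p,\lambda}$ being $\lambda$-Lipschitz by bicombing convexity, with radial speed $d(p,x)\le R$). One caveat: your closing ``equivalent route'' via slicing does not actually work as stated, since the coarea inequality only bounds the integral of slice masses \emph{from above} by the mass of the cone, not the reverse, so you should rely on the product/homotopy mass estimate as in your main argument.
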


See \cite[Section 2.3]{wenger2005isoperimetric}.

\begin{theorem}[isoperimetric inequality] \label{thm:isop-ineq}
	Let $n \ge 2$, and let $X$ be a complete metric space with a convex geodesic bicombing. Then every cycle 
	$S \in \bZ_{n-1}(X)$ possesses a filling $T \in \bI_n(X)$  such that
\begin{enumerate}
\item 	$\M(T) \le c_0\cdot\M(S)^{n/(n-1)}$;
\item $\spt(T)\subset N_{c_1 \M(S)^{1/n-1}}(\spt(S))$,
\end{enumerate}
	for constants $c_0,c_1 > 0$ depending only on $n$. Moreover, if $S$ has compact support, then we can also require $T$ to have compact support.
\end{theorem}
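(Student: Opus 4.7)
The result is Wenger's Euclidean isoperimetric inequality for metric currents \cite{wenger2005isoperimetric}, which requires only a cone-type inequality and therefore applies in any space with a convex geodesic bicombing. The plan is to follow Wenger's cut-and-iterate strategy: combine the coning inequality (Lemma~\ref{lem:coning_inequality}) with a slicing step based on the coarea inequality, and run an outer induction on the dimension $n$ with an inner iteration that chips away at the cycle.

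The base case $n=2$ reduces, via the standard decomposition of $1$-cycles into closed Lipschitz curves, to coning each component from a point on its support and summing; Lemma~\ref{lem:coning_inequality} then yields $\M(T)\lesssim\M(S)^2$. For the inductive step, assume the inequality holds in dimension $n-1$ and let $S\in\bZ_{n-1}(X)$ have mass $M$. Fix $p\in\spt(S)$ and apply the coarea inequality to $f:=d(p,\cdot)$:
\[\int_0^\infty\M(\slc{S,f,r})\,dr\le M,\]
so an averaging argument produces a radius $r$ of order $M^{1/(n-1)}$ with $\M(R)\le c\,M^{(n-2)/(n-1)}$, where $R:=\slc{S,f,r}$. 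By the inductive hypothesis $R$ admits a filling $T_R\in\bI_{n-1}(X)$ with $\M(T_R)\lesssim M$ and $\spt(T_R)$ in a controlled neighborhood of $\spt(R)$.

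Next split $S=S_1+S_2$ with $S_1:=S\on B_p(r)-T_R$ and $S_2:=S-S_1$; both summands are $(n-1)$-cycles. Cone $S_1$ from $p$ via Lemma~\ref{lem:coning_inequality} to obtain a filling $T_1$ with $\M(T_1)\le r\cdot\M(S_1)\lesssim M^{n/(n-1)}$ and $\spt(T_1)\subset B_p(r)$. By selecting $r$ in a window where $S$ has a definite proportion of its mass both inside and outside $B_p(r)$, one arranges $\M(S_2)\le(1-\eta)M$ for a dimensional constant $\eta>0$. Iterate the construction on $S_2$ to produce $T_2,T_3,\dots$, and set $T:=\sum_i T_i$. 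A geometric series estimate gives $\M(T)\lesssim M^{n/(n-1)}$, and tracking the support of each cone gives $\spt(T)\subset N_{cM^{1/(n-1)}}(\spt(S))$, which is the content of the second bullet (with $c_0$ absorbing the mass factor).

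The main technical obstacle is calibrating the slicing radius at each stage so that simultaneously (i) the slice mass is below the target $c\,M^{(n-2)/(n-1)}$, (ii) the remainder loses a definite proportion of its mass, and (iii) the cone lies in a neighborhood of $\spt(S)$ whose sizes sum convergently. This is handled by a pigeonhole argument within a window $[\alpha M^{1/(n-1)},\beta M^{1/(n-1)}]$ of radii, for suitable dimensional constants $\alpha<\beta$. Compactness of supports is preserved at every step since slicing, coning, and the inductive fillings all preserve compact support.
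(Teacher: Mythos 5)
You should first note that the paper does not prove this statement at all: item (1) is quoted from \cite[Theorem~1.2]{wenger2005isoperimetric} (with the remark following it for the compact-support assertion), and item (2) from \cite[Proposition~4.3 and Corollary~4.4]{wenger2011compactness}. Your proposal instead tries to reconstruct Wenger's proof, which is legitimate in spirit (it is indeed his cone-plus-induction-on-dimension scheme, and your base case and the algebra of the splitting $S=S_1+S_2$ with $S_1=S\on B_p(r)-T_R$ are correct), but the sketch has a genuine gap at the decomposition/iteration step.

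The gap is the calibration you defer to a "pigeonhole in a window of radii". You fix one point $p\in\spt(S)$, slice at $r\sim \M(S)^{1/(n-1)}$, and claim one can arrange that a definite proportion of the mass of $S$ sits inside $B_p(r)$, so that $\M(S_2)\le(1-\eta)\M(S)$ and the iteration converges geometrically. This cannot be arranged: if $S$ is a sum of many small cycles whose mutual distances are much larger than $\M(S)^{1/(n-1)}$, no ball of radius comparable to $\M(S)^{1/(n-1)}$ around any point captures more than an arbitrarily small fraction of the mass. Moreover your bound $\M(T_R)\lesssim \M(S)$ for the slice filling is far too weak for the iteration: since $S_2=S\on(X\setminus B_p(r))+T_R$, one needs the slice-filling cost to be a small fraction of the mass \emph{enclosed in} $B_p(r)$, otherwise $\M(S_2)$ need not drop below $\M(S)$ at all. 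This is precisely where Wenger's argument is more delicate: the point and radius are selected by a stopping-time/maximal-function argument adapted to the local mass distribution, so that the excised cycle is ``round'' (its diameter is bounded by a constant times its \emph{own} mass to the power $1/(n-1)$) and the inductively filled slice costs at most an $\eps$-fraction of the enclosed mass; the outcome is not a geometrically decaying remainder but a countable decomposition $S=\sum_i S_i$ with $\sum_i \M(S_i)\le(1+\eps)\M(S)$, after which each round piece is coned off via Lemma~\ref{lem:coning_inequality} and the fillings are summed using $\sum_i \M(S_i)^{n/(n-1)}\le\bigl(\sum_i \M(S_i)\bigr)^{n/(n-1)}$; the neighborhood estimate in item (2) (whose radius should be read as $c_0\M(S)^{1/(n-1)}$) is then obtained piecewise, as in \cite{wenger2011compactness}. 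As written, your inner iteration neither terminates nor converges, so the proof does not close; either carry out Wenger's decomposition in full or cite the two references as the paper does.
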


The first item is proved in \cite{wenger2005isoperimetric}, see the comment after \cite[Theorem 1.2]{wenger2005isoperimetric} regarding compact supports. 
The second item follows from \cite[Proposition 4.3 and Corollary 4.4]{wenger2011compactness}.

\subsection{Minimizers and density}

Suppose $X$ is a complete metric space. We say an element $T\in \I_n(X)$ is \emph{minimizing}, or $T$ is a \emph{minimizer}, if $\M(T)\le \M(T')$ for any 
$T'\in\I_n(X)$ with $\D T=\D T'$. For a constant $\Lambda\ge 1$, we say $T$ is \emph{$\Lambda$-minimizing}, if for each piece $T'$ of $T$, we have $\M(T')\le \Lambda\cdot\M(T'')$ for any $T''\in\I_n(X)$ with $\D T''=\D T'$. 
Note that $T$ is minimizing if and only if $T$ is 1-minimizing. A local current $T\in \I_{n,\loc}(X)$ with $X$ being proper is \emph{minimizing}, if each compactly supported piece of $T$ is minimizing. 
We define $\Lambda$-minimizing for local currents in a similar way.

For $S\in \bZ_n(X)$, we define the {\em filling mass} by $\Fill(S):=\inf \{\M(T):T\in \I_{n+1}(X), \D T=S\}$. Further, we define the {\em filling distance} between cycles $S, S'\in \bZ_n(X)$ by
\[\F(S,S')=\Fill(S-S').\]

\begin{theorem} \label{thm:plateau}
	\cite[Theorem 2.4]{higherrank}
	Let $n \ge 1$, and let $X$ be a proper metric space with a convex geodesic bicombing. Then for every $S \in \bZ_{n-1,\cs}(X)$
	there exists a filling $T \in \bI_{n,\cs}(X)$ of $S$ with mass $\M(T)=\Fill(S)$.
	Furthermore, $\spt(T)$ is within distance at most $(\M(T)/\de_0)^{1/n}$
	from $\spt(S)$ for some constant $\de_0 > 0$ depending only on $n$.
\end{theorem}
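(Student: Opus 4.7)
\medskip

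\noindent\textbf{Proof plan.} The plan is to apply the direct method of the calculus of variations, using the compactness theorem for integral currents in proper metric spaces together with a truncation trick to keep supports uniformly bounded, and then to derive the distance estimate from a mass density lower bound proved by cone comparison.

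\smallskip

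\emph{Existence of a minimizer.} First I would pick a minimizing sequence $T_k \in \bI_{n,\cs}(X)$ with $\partial T_k = S$ and $\M(T_k) \to \Fill(S)$. To keep the supports within a common compact set, I use a cone truncation: choose a basepoint $p \in \spt(S)$, set $m_k(r) := \|T_k\|(\bar B_p(r))$, and observe that for almost every large $r > \diam(\spt(S) \cup \{p\})$ the slice $\slc{T_k, d_p, r}$ is a cycle supported in $\Sph{p}{r}$; replacing $T_k \restriction (X \setminus B_p(r))$ by $-\cone_p(\slc{T_k, d_p, r})$ yields a new filling of $S$ whose mass outside $B_p(r)$ is controlled by $r \cdot \M(\slc{T_k, d_p, r})$ via Lemma~\ref{lem:coning_inequality}. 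Combining this with the coarea inequality on a suitable interval of radii produces a filling $T'_k$ of $S$, supported in some bounded neighborhood of $\spt(S)$ of radius $R = R(\Fill(S),n)$, with $\M(T'_k) \le \M(T_k) + o(1)$. After this adjustment all $T'_k$ live in a common closed ball $\bar B_p(R')$, so by the Ambrosio--Kirchheim / Wenger compactness theorem for integral currents in proper metric spaces, a subsequence converges weakly to some $T \in \bI_{n,\cs}(X)$. By weak continuity of the boundary and lower semicontinuity of mass, $\partial T = S$ and $\M(T) \le \liminf \M(T'_k) = \Fill(S)$; the opposite inequality is the definition of $\Fill$.

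\smallskip

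\emph{Support bound via monotonicity.} For the distance estimate I would argue that any minimizer has a uniform Euclidean-type density lower bound at points of its support. Fix $p \in \spt(T)$ and set $d := d(p,\spt(S))$. For $0 < r < d$ the slice $\sigma_r := \slc{T, d_p, r}$ is a cycle (since $\partial T = S$ has no mass in $B_p(r)$), and the current $T' := T - T\restriction \bar B_p(r) + \cone_p(\sigma_r)$ is another filling of $S$. Minimality of $T$ together with the cone inequality (in the sharp form $\M(\cone_p(\sigma_r)) \le \tfrac{r}{n}\M(\sigma_r)$ available in spaces with a convex bicombing, cf.\ \cite{wenger2005isoperimetric}) gives
\[
m(r) := \|T\|(\bar B_p(r)) \le \frac{r}{n}\, \M(\sigma_r).
\]
The coarea inequality yields $m'(r) \ge \M(\sigma_r)$ for a.e.\ $r$, hence
\[
\frac{d}{dr}\!\left( \frac{m(r)}{r^n} \right) \ge 0
\quad\text{for a.e. } 0 < r < d,
\]
so $r \mapsto m(r)/r^n$ is nondecreasing. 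A standard lower bound at small scales (e.g.\ from the existence of tangent $n$-densities for integral currents, or directly from the fact that $p \in \spt(T)$ implies $\liminf_{r\to 0^+} m(r)/r^n \ge \delta_0$ for a purely dimensional constant $\delta_0 > 0$) gives $m(r) \ge \delta_0 r^n$ for all $0 < r < d$. Letting $r \nearrow d$ and using $m(d) \le \M(T)$ produces $d \le (\M(T)/\delta_0)^{1/n}$, which is the claimed bound.

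\smallskip

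\emph{Main obstacle.} The technically delicate step is the uniform support control for the minimizing sequence: one must justify that the cone-truncation procedure can be carried out simultaneously for all $k$ so that the resulting sequence remains minimizing and lives in a common compact subset of $X$, which is required to invoke the metric compactness theorem. The density argument is then essentially formal once the sharp coning inequality and coarea inequality are in place, although the ``small-scale'' density lower bound $\liminf_{r\to 0^+} m(r)/r^n \ge \delta_0$ — which is the actual input producing the dimensional constant $\delta_0$ — relies on the structure theory of integral currents (either via rectifiability and positivity of the $n$-density on $\spt(T)$, or via a direct iteration of the monotonicity inequality). This dimensional constant is the $\delta_0$ appearing in the statement.
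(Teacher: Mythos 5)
The paper does not prove this statement at all; it is quoted verbatim from \cite[Theorem 2.4]{higherrank}, so your attempt has to be measured against the standard argument behind that result. Your overall plan (direct method plus a density lower bound at points of the support) is indeed the right one, but two steps fail as written. In the existence part, the cone truncation does not give $\M(T'_k)\le\M(T_k)+o(1)$: by the coarea inequality you can only guarantee a radius $r\in[R,2R]$ with $\M(\slc{T_k,d_p,r})\lesssim \M(T_k)/R$, so the cone from $p$ over that slice has mass of order $r\cdot\M(T_k)/R\approx\M(T_k)$ --- a bounded, not vanishing, addition --- and the modified sequence need not be minimizing. The correct cap is the isoperimetric filling of the slice from Theorem~\ref{thm:isop-ineq} (mass $\le c_0(\M(T_k)/R)^{n/(n-1)}$, with support control near the slice by item (2)); but even then, with $R$ fixed you only obtain approximate minimizers supported in a fixed ball, and an extra ingredient (for instance the uniform density/support bound for quasi-minimizers, Lemma~\ref{lem:density}, applied to the ball-constrained minimizers) is needed before one can let $R\to\infty$ and extract an exact minimizer. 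This is exactly the ``main obstacle'' you name, and it is left open.

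The support estimate has a genuine gap as well. The monotonicity of $m(r)/r^n$ from minimality plus the sharp cone comparison is fine, but the small-scale input $\liminf_{r\to 0}m(r)/r^n\ge\delta_0$ at \emph{every} point of $\spt(T)$ is not a standard fact: for integral currents in metric spaces the positive lower density is only known $\|T\|$-almost everywhere (Ambrosio--Kirchheim), and a support point of a general integral current can have vanishing density, so ``directly from the fact that $p\in\spt(T)$'' is false and the dimensional constant $\delta_0$ is never actually produced. The standard route --- the one behind \cite[Lemma 3.3]{higherrank}, quoted here as Lemma~\ref{lem:density} --- needs no small-scale input: minimality together with the isoperimetric inequality gives $m(r)\le c_0\,m'(r)^{n/(n-1)}$ for a.e.\ $r<d(p,\spt(S))$, and since $m(r)>0$ for all $r>0$ this differential inequality integrates from $0$ to $m(r)\ge\delta_0 r^n$ with $\delta_0$ depending only on $n$ and $c_0$, which yields the claimed distance bound. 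Alternatively, your monotonicity argument can be repaired by a point-selection step (pass to a nearby point of positive lower density, which exists $\|T\|$-a.e., and run monotonicity there at the cost of a factor $2^n$), but some such argument must be supplied.
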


Recall the following special case of
\cite[Definition 3.1]{higherrank}. A cycle $S \in \bZ_{n,\loc}(X)$ in a proper metric space is
called  {\em $(\Lambda,a)$-quasi-minimizing}, if for all $x\in\spt(S)$ and almost all $r>a$
holds
\[\M(S\on \B{x}{r})\leq\Lambda\cdot \M(T)\]
whenever $T\in \bI_{n+1,\cs}(X)$ with $\D T=\D(S\on \B{x}{r})$.

\begin{lemma}[density] \label{lem:density}
	Let $n \ge 1$, let $X$ be a proper metric space with a convex geodesic bicombing. 
	If $S \in \bZ_{n,\loc}(X)$ is $(\Lambda,a)$-quasi-minimizing,
	and if $x \in \spt(S)$ and $r > 2a$,
	then
	\[
	\|S\|(\B{x}{r}) \ge \theta_0\cdot r^n
	\] 
	for some constant $\theta_0 > 0$ depending only on $n$ and $\Lambda$.
\end{lemma}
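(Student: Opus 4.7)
The plan is to establish the density bound via the classical monotonicity/ODE argument for quasi-minimizing currents: derive a differential inequality for the volume function $m(r) := \|S\|(\B{x}{r})$ by slicing $S$ at spheres around $x$, filling the slices via the isoperimetric inequality, and comparing the resulting fillings with the pieces $S\on\B{x}{r}$ using the quasi-minimality hypothesis.

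First I would slice. Writing $d_x := d(x,\cdot)$, the coarea inequality applied to the $1$-Lipschitz function $d_x$ gives
$$\int_0^R \M(\slc{S,d_x,r})\,dr \;\le\; \|S\|(\B{x}{R}) \;=\; m(R),$$
and hence $\M(\slc{S,d_x,r}) \le m'(r)$ for a.e.\ $r > 0$, where $m'$ denotes the a.e.\ derivative of the nondecreasing function $m$. Since $\D S = 0$, the slice $\slc{S,d_x,r}$ is a compactly supported $(n-1)$-cycle for a.e.\ $r$.

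Next, the isoperimetric inequality (Theorem~\ref{thm:isop-ineq}) furnishes, for a.e.\ $r$, a compactly supported filling $T_r \in \bI_{n,\cs}(X)$ of $\slc{S,d_x,r}$ with $\M(T_r) \le c_0\cdot (m'(r))^{n/(n-1)}$. The piece $S\on\B{x}{r}$ shares its boundary with $T_r$, so the $(\Lambda,a)$-quasi-minimality of $S$ yields
$$m(r) \;=\; \M(S\on\B{x}{r}) \;\le\; \Lambda\cdot\M(T_r) \;\le\; \Lambda c_0\cdot (m'(r))^{n/(n-1)}$$
for a.e.\ $r > 2a$. This is equivalent to the ODE-style inequality $\tfrac{d}{dr}m(r)^{1/n} \ge c_1$ for some constant $c_1 = c_1(n,\Lambda,c_0) > 0$.

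Finally I would integrate. The hypothesis $x \in \spt(S)$ guarantees that $m(s) > 0$ for every $s > 0$, so $m(\cdot)^{1/n}$ is a strictly positive absolutely continuous function on $(2a,\infty)$. Integrating the ODE inequality from some $s \in (2a, r)$ up to $r$ and choosing $s$ so that $r - s$ is of order $r$ yields $m(r)^{1/n} \ge c_2\cdot r$, hence $m(r) \ge \theta_0\cdot r^n$ with $\theta_0 = \theta_0(n,\Lambda,c_0)$. The main obstacle is the interplay between the quasi-minimality threshold $r > 2a$ (at which the comparison becomes effective) and the requested $\theta_0 r^n$ lower bound, as opposed to the weaker $\theta_0(r-2a)^n$ that falls straight out of integrating from $2a$. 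Resolving this requires that the $(\Lambda,a)$-quasi-minimization comparison remain available throughout a subinterval of $(2a, r]$ of length comparable to $r$; verifying this is the one step that depends on the precise form of the $(\Lambda,a)$-quasi-minimizing hypothesis adopted in the paper.
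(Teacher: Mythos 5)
The paper gives no proof of this lemma at all---it simply cites \cite[Lemma 3.3]{higherrank}---and your slicing/isoperimetric/ODE argument is exactly the proof given in that reference, so your approach is essentially the same and is correct. The point you flag at the end resolves as you anticipate: the quasi-minimizing comparison is available for every radius $\ge a$, so integrating the differential inequality for $m^{1/n}$ over $(a,r]$ yields $m(r)\ge c_1^n (r-a)^n\ge (c_1/2)^n r^n$ once $r>2a$ (with the minor caveat that for $n=1$ the isoperimetric filling of the $0$-dimensional slices is replaced by the observation that these slices are nonzero, hence have mass at least $1$, for a.e.\ $s\in(a,r)$).
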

This is a special case of \cite[Lemma 3.3]{higherrank}. 

\begin{lemma}\label{lem:fill-density}
	\cite[Lemma 3.4]{higherrank}
	Let $n \ge 1$, let $X$ be a complete metric space with a convex geodesic bicombing. 
	If $S \in \bZ_n(X)$ (or $S\in \bZ_{n,\loc}(X)$ when $X$ is proper) is $(\Lambda,a)$-quasi-minimizing,
	and if $x \in \spt(S)$ and $r > 4a$, 
	then 
	\[	
		\inf\{\M(V) : V \in \bI_{n+1,\cs}(X),\,\spt(S-\D V) \cap \B{x}{r} = \es\} \ge \theta_1\cdot r^{n+1}
	\]
	for some constant $\theta_1 > 0$ depending only on $n$, the constant $\theta_0$ from 
	Lemma~\ref{lem:density}, and $\Lambda$.
\end{lemma}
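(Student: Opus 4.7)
The plan is to slice $V$ by the $1$-Lipschitz function $d_x := d(x,\cdot)$ and compare each slice with a corresponding piece of $S$ using the quasi-minimality of $S$. Let $V\in\bI_{n+1,\cs}(X)$ satisfy $\spt(S-\D V)\cap\B{x}{r}=\es$; equivalently, $\D V$ and $S$ agree on $\B{x}{r}$. For a.e.\ $t\in(0,r)$, the slice formula for restrictions gives
\[
\D(V\on\B{x}{t}) \;=\; (\D V)\on\B{x}{t} - \slc{V,d_x,t} \;=\; S\on\B{x}{t} - \slc{V,d_x,t},
\]
so $S\on\B{x}{t}$ and $-\slc{V,d_x,t}$ are two elements of $\bI_n(X)$ sharing the boundary $\slc{S,d_x,t}$.

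Applying the $(\Lambda,a)$-quasi-minimizing property of $S$ to the piece $S\on\B{x}{t}$, with competitor $-\slc{V,d_x,t}$, I would obtain for a.e.\ $t\in(2a,r)$ the comparison $\M(S\on\B{x}{t})\le \Lambda\cdot\M(\slc{V,d_x,t})$. Combined with the density estimate $\|S\|(\B{x}{t})\ge \theta_0 t^n$ from Lemma~\ref{lem:density}, this yields the slice bound
\[
\M(\slc{V,d_x,t}) \;\ge\; \frac{\theta_0}{\Lambda}\,t^n \qquad\text{for a.e.\ }t\in(2a,r).
\]
To conclude, I would apply the coarea inequality to $V$ and $d_x$ to integrate this bound:
\[
\M(V)\;\ge\;\|V\|(\B{x}{r})\;\ge\;\int_0^r \M(\slc{V,d_x,t})\,dt\;\ge\;\int_{2a}^r \frac{\theta_0}{\Lambda}\,t^n\,dt\;=\;\frac{\theta_0}{\Lambda(n+1)}\bigl(r^{n+1}-(2a)^{n+1}\bigr).
\]
The hypothesis $r>4a$ gives $(2a)^{n+1}\le r^{n+1}/2^{n+1}$, and hence $\M(V)\ge \theta_1\, r^{n+1}$ with $\theta_1 = \theta_0(1-2^{-(n+1)})/(\Lambda(n+1))$, which depends only on $n$, $\theta_0$, and $\Lambda$, as required.

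The main obstacle is justifying the quasi-minimizing comparison $\M(S\on\B{x}{t})\le \Lambda\,\M(\slc{V,d_x,t})$ for a.e.\ $t\in(2a,r)$: the defect scale $a$ built into the definition of $(\Lambda,a)$-quasi-minimality is precisely the scale below which such a bound can fail, so one needs to verify that working at scales $t>2a$ is enough, and that the competitor $\slc{V,d_x,t}$ is a legitimate one (in particular, that it is a compactly supported integral current, which is automatic for a.e.\ $t$ because $V\in\bI_{n+1,\cs}(X)$). The assumption $r>4a$ provides the comfortable integration window $(2a,r)$ where both the density lemma and this comparison apply and where the main $t^n$ term dominates any sub-leading $a$-dependent correction.
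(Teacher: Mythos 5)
Your proof is correct and is essentially the argument given in the cited source \cite[Lemma 3.4]{higherrank} (the present paper only quotes the statement without reproving it): slice $V$ by $d_x$, use the quasi-minimality of $S$ on $\B{x}{t}$ with the slice as competitor, feed in the density bound of Lemma~\ref{lem:density}, and integrate over $t\in(2a,r)$ via the coarea inequality, with $r>4a$ absorbing the $(2a)^{n+1}$ correction. The only blemish is the sign in your slicing identity (with the usual convention $\partial(V\on \B{x}{t})=(\partial V)\on\B{x}{t}+\slc{V,d_x,t}$), which is immaterial for the mass estimates.
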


\section{Quasiflats in metric spaces}
\label{sec:quasiflats in metric spaces}
The main goal of this section is to provide some auxiliary results on building chains between cycles in the space and their ``projections'' on quasiflats.

\subsection{Lipschitz quasiflats and quasi-retractions}
\label{subsec:Lipschitz quasifalts}

We recall the following result which allows us to replace quasidisks with Lipschitz continuous quasidisks, at least in the presence of a convex geodesic bicombing.
It was proven in \cite[Lemma 1.2]{LS} for Hadamard spaces but the proof extends to our setting.

\begin{lemma}\label{lem_lip_quasidisc}
	Let $X$ be a metric space with a convex geodesic bicombing and let $\Phi:B\to X$ be an $n$-dimensional $(L,A)$-quasidisk. 
	Then there exist constants $L', A'$ depending only on $L, A, n$ and an $L'$-Lipschitz $(L',A')$-quasidisk $\Phi':B\to X$
	such that $d(\Phi(x), \Phi'(x))\leq A'$ for all $x\in B$.

\end{lemma}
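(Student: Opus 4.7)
The plan is to construct $\Phi'$ by defining it on the vertices of a fine triangulation of $B$ as $\Phi$ and then extending simplex by simplex using the convex bicombing $\sigma$.

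First I would fix a simplicial triangulation $\tau$ of the closed ball $B\subset\mathbb{R}^n$ with vertex set $V$ and mesh diameter bounded by $1$ (and with a uniformly bounded combinatorial structure, so that each simplex has bounded valence). Set $\Phi'(v):=\Phi(v)$ for $v\in V$. Choose an ordering of the vertices of each simplex, and extend $\Phi'$ inductively over skeleta: on a $k$-simplex $[v_0,\ldots,v_k]$ with the $(k-1)$-face $\tau'=[v_1,\ldots,v_k]$ already parametrized by a map $\psi\colon\tau'\to X$, define
\[
\Phi'\!\left((1-t)v_0+t\,x\right):=\sigma\!\left(\Phi(v_0),\psi(x),t\right)
\]
for $x\in\tau'$, $t\in[0,1]$, using barycentric coordinates relative to $v_0$. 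This is the standard geodesic coning extension and it produces a continuous extension across each simplex.

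Next I would verify the Lipschitz estimate with a constant depending only on $L,A,n$. On each edge $[v_i,v_j]$ of $\tau$ the map $\Phi'$ is a constant-speed parametrization of $\sigma_{\Phi(v_i)\Phi(v_j)}$, whose speed is $d(\Phi(v_i),\Phi(v_j))\le L+A$ (since $|v_i-v_j|\le 1$). Convexity of $t\mapsto d(\sigma_{xy}(t),\sigma_{x'y'}(t))$ lets me propagate a Lipschitz bound at each inductive step: writing a $k$-simplex as the geodesic join (via $\sigma$) of $v_0$ with the previously constructed $(k-1)$-face, the convex combination property yields a Lipschitz constant for the coned map bounded by a universal multiple of the Lipschitz constant on the face plus $\max_j d(\Phi(v_0),\Phi(v_j))\le L+A$. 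Iterating $n$ times gives an $L'=L'(L,A,n)$-Lipschitz map globally on $B$.

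Then I would check closeness: for $x$ in a simplex $\tau$ with vertex $v_0$, both $\Phi(x)$ and $\Phi'(x)$ lie within distance $\le L+A$ of $\Phi(v_0)=\Phi'(v_0)$, so $d(\Phi(x),\Phi'(x))\le A'$ for some $A'=A'(L,A,n)$. Finally, the quasi-isometric embedding property for $\Phi'$ follows immediately from this closeness and the $(L,A)$-quasi-isometric embedding property of $\Phi$: for $x,y\in B$,
\[
\tfrac{1}{L}|x-y|-A-2A'\le d(\Phi'(x),\Phi'(y))\le L|x-y|+A+2A',
\]
so $\Phi'$ is an $(L',A')$-quasi-isometric embedding after enlarging constants.

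The main obstacle is the inductive Lipschitz estimate on skeleta: one must check carefully that the convexity of the bicombing propagates a Lipschitz bound through each coning step with a dimensional constant, since the naive estimate on a single join involves both the Lipschitz constant of the face parametrization and the distance from the apex to the face. Because all relevant vertex distances are controlled by $L+A$ and the triangulation has bounded local combinatorics, this recursion closes in $n$ steps with a constant depending only on $L$, $A$, and $n$.
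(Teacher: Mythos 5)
Your construction is exactly the one the paper relies on: the paper proves this lemma by citing \cite[Lemma 1.2]{LS} (triangulate, keep $\Phi$ on vertices, extend by iterated geodesic coning) and noting that the convexity property of the bicombing is all that argument needs, which is precisely what you carry out. Your proof is correct, modulo the routine points you would need to make explicit (a globally consistent vertex ordering so the coning agrees on shared faces, and a triangulation of the ball with uniformly bounded geometry), and it takes essentially the same approach as the paper.
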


From now on we will restrict our attention to $L$-Lipschitz  $(L,A)$-quasiflats/quasidisks.

\begin{definition}
	Let $K\subset X$ be a closed subset. A map $\pi:X\to K$ is called a $\lambda$-quasi-retraction if it is $\lambda$-Lipschitz and the restriction
	$\pi|_K$ has displacement $\leq \lambda$.
\end{definition}

\begin{lemma}\label{lem_quasiflat_to_bilipschitzflat}
	Let $X$ be a length space and  let $\Phi:B\to X$ be an  $L$-Lipschitz $(L,A)$-quasidisk. Then there exist $\bar L$ depending only on $L$ and $A$, a metric space $\bar X$, 
	an $L$-bilipschitz embedding $X\to \bar X$ and an $L$-Lipschitz retraction $\pi:\bar X\to X$ with the following additional properties. $\bar X$
	contains a $\bar L$-bilipschitz disk $\varphi:B\to\bar X$ such that $d(\Phi,\varphi)\leq L$ and $d_H(X,\bar X)\leq L$ and the map
	$\Phi$ factors as $\Phi=\pi\circ\varphi$.

\end{lemma}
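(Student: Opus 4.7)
The plan is to thicken $X$ by gluing on an isometric copy $\bar B$ of the Euclidean ball $B$ along the image of $\Phi$, using a short ``bridge'' of fixed length $\varepsilon$ to absorb the additive quasi-isometry defect. I would set $\varepsilon := \max(L, A)$, take $\bar B$ to be a disjoint isometric copy of $B$, and put $\bar X := X \sqcup \bar B$ as a set. Equip $\bar X$ with the shortest-path pseudometric $\bar d$ generated by the weighted graph whose edges are: all pairs in $X$ with weight $d_X$, all pairs in $\bar B$ with weight $d_B$, and a bridge of weight $\varepsilon$ joining $b \in \bar B$ to $\Phi(b) \in X$ for each $b$. Equivalently, since $X$ is a length space, $\bar X$ is the mapping-cylinder-style gluing $\bigl(\bar B \times [0,\varepsilon] \sqcup X\bigr)/\!\sim\,$ with $(b,0) \sim \Phi(b)$, endowed with the induced length metric.

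I would next verify the required properties in order. First, the canonical inclusion $\iota \colon X \hookrightarrow \bar X$ satisfies $\bar d \leq d_X$ trivially, and any competing concatenation from $x$ to $x'$ passing through $\bar B$ has length at least $d_X(x, \Phi(b_1)) + d_B(b_1, b_2) + d_X(\Phi(b_2), x') + 2\varepsilon$ for some $b_1, b_2 \in \bar B$; substituting the quasi-isometry lower bound $d_B(b_1, b_2) \geq \tfrac{1}{L}\bigl(d_X(\Phi(b_1), \Phi(b_2)) - A\bigr)$ and applying the triangle inequality in $X$ yields $\bar d(x,x') \geq \tfrac{1}{L} d_X(x,x')$ whenever $\varepsilon$ dominates $A/L$. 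The analogous calculation, now using the complementary quasi-isometry inequality $d_B(b, b') \leq L\, d_X(\Phi(b), \Phi(b')) + L A$ on concatenations that dip into $X$, shows the inclusion $\bar B \hookrightarrow \bar X$ is $\bar L$-bilipschitz with $\bar L$ depending only on $L$ and $A$; this makes $\bar D := \bar B \subset \bar X$ a bilipschitz $n$-disc.

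Second, let $\pi \colon \bar X \to X$ be defined by $\pi|_X = \id$ and $\pi|_{\bar B} = \Phi$. For a mixed pair $b \in \bar B$, $x \in X$ and any bridge point $b_1 \in \bar B$ one has
\[
d_X(\Phi(b), x) \leq d_X(\Phi(b), \Phi(b_1)) + d_X(\Phi(b_1), x) \leq L\bigl(d_B(b, b_1) + \varepsilon + d_X(\Phi(b_1), x)\bigr),
\]
using $L \geq 1$; taking the infimum over $b_1$ gives $d_X(\pi(b), \pi(x)) \leq L\, \bar d(b, x)$, and the remaining case of a pair in $\bar B$ reduces to this via the triangle inequality along a minimizing concatenation. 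Finally, every $b \in \bar B$ is joined by a bridge of length $\varepsilon$ to $\Phi(b) \in D$, so $d_H(\bar D, D) \leq \varepsilon$ and $d_H(X, \bar X) \leq \varepsilon$ in $\bar X$, giving the asserted Hausdorff estimates (with constant depending on $L$ and $A$).

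The main technical issue I foresee is the calibration of the bridge length $\varepsilon$: it has to be large enough to dominate the additive defect $A$ of $\Phi$ in the bilipschitz lower bounds for both $\iota$ and $\bar B \hookrightarrow \bar X$, yet it must remain uniformly bounded in $L$ and $A$ so that the Hausdorff distances and all bilipschitz constants depend only on $L$ and $A$. Setting $\varepsilon = \max(L,A)$ balances these constraints, after which each verification reduces to a triangle-inequality bookkeeping exercise rather than any essentially new geometric input.
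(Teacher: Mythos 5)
Your construction is essentially the paper's own: the authors glue the cylinder $B\times[0,L]$ to $X$ along $B\times\{L\}$ via $\Phi$, take the induced length metric, let $\bar D$ be the far copy of $B$, and use the obvious projection as the retraction (note that the retraction being $L$-Lipschitz genuinely uses the hypothesis that $\Phi$ is $L$-Lipschitz, not just an $(L,A)$-quasi-isometric embedding; your estimate for pairs in $\bar B$ whose minimizing chain stays in $\bar B$ also silently relies on this). The only substantive difference is the calibration of the collar: you take bridge length $\max(L,A)$ so that the two bridge crossings absorb the additive defect $A$, which yields clean $L$-bilipschitz bounds for both inclusions with no case analysis, whereas the paper keeps bridge length $L$ and absorbs $A$ into the bilipschitz constant by splitting into the ranges $d_B(x,y)<2L$, $2L\le d_B(x,y)<2L(2L+A)$ and $d_B(x,y)\ge 2L(2L+A)$, getting the constant $2L+A$. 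The price of your choice is that you only obtain $d_H(D,\bar D)\le\max(L,A)$ and $d_H(X,\bar X)\le\max(L,A)$ rather than the bound $L$ asserted in the statement; this is harmless, since every later use (Corollary~\ref{cor_quasiretraction} onward) only needs constants depending on $L,A,n$, and if you want the literal statement you can revert to bridge length $L$ and rerun your estimates with the paper's three-range argument. One small point of rigor: state your lower bounds for arbitrary chains, which may make several excursions between $X$ and $\bar B$; replacing each excursion by the corresponding segment in the other piece, exactly as in your single-excursion computation, settles this.
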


\begin{proof}
	We glue $B\times[0,L]$ to $X$ along $B \times\{L\}$ via $\Phi$ and denote the resulting space by $\bar X$. 
	We equip  $\bar X$ with the induced length metric \cite[Definition 3.1.12]{BBI_metric} where we view $B\times[0,L]$ 
	as a flat cylinder of width $L$ and identify $X$ with its image in $\bar X$. Then the natural projection $\pi:\bar X\to X$ is L-Lipschitz and
	the canonical embedding $X\hookrightarrow\bar X$ is $L$-bilipschitz.
	We define $\varphi$ as the canonical embedding $B\hookrightarrow B\times\{0\}\subset\bar X$. The distance bound between $\Phi$ and $\varphi$ is clear. 
	It remains to show the bilipschitz property. In the following we will denote by $d_Y$ the metric measured in a length space $Y$ (not to be confused
	with the Hausdorff metric $d_H$.) To simplify notation we will identify $B$ with its image under $\varphi$.
  Let $x,y$ be points in $B$. If $d_{\bar X}(x,y)<2L$, then we have $d_{\bar X}(x,y)=d_{B}(x,y)$ since $B$ is convex in $B\times[0,L]$.
	Moreover, if $d_{B}(x,y)\geq 2LA$, then $d_{\bar X}(x,y)\geq\frac{1}{2L^2}d_{B}(x,y)$ since $\pi$ is $L$-Lipschitz and 
	$\Phi$ is a $(L,A)$-quasi-isometric embedding.
	Finally, for points $x, y\in B$ with $2L<d_{B}(x,y)< 2LA$ holds $d_{\bar X}(x,y)\geq 2L\geq\frac{1}{A}d_{B}(x,y)$ and thus $\varphi$
	is $(2L^2+A)$-bilipschitz as required.
\end{proof}

\begin{corollary}\label{cor_quasiretraction}
	Let $X$ be a length space and  let $\Phi:B\to X$ be an $n$-dimensional  $L$-Lipschitz $(L,A)$-quasidisk with image $D$. Then there exist constants $\lambda_1$ and $\lambda_2$ which depend 
	only on $L,A$ and $n$, and a $\lambda_1$-Lipschitz quasiretraction $\pi:X\to D$ such that $d(x,\pi(x))\le \lambda_2$ for any $x\in D$. The map $\pi$ factors as $\pi=\pi''\circ\pi'$ 
	with a Lipschitz map $\pi':X\to\R^n$. Moreover, $\lambda_2\to 0$ as $A\to 0$.
\end{corollary}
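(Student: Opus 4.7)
The strategy is to combine Lemma~\ref{lem_quasiflat_to_bilipschitzflat} with the McShane--Whitney extension theorem in order to manufacture a Lipschitz ``quasi-inverse'' of $\Phi$. Apply Lemma~\ref{lem_quasiflat_to_bilipschitzflat} (with a small modification of the cylinder height, see below) to obtain a length space $\bar X\supset X$, an $L$-Lipschitz retraction $r\colon\bar X\to X$, and an $\bar L$-bilipschitz parameterization $\Psi\colon B\to\bar D\subset\bar X$ with $d_H(D,\bar D)\le h$, where $h>0$ is the height of the glued cylinder. By the gluing construction, for every $t\in B$ the points $\Phi(t)\in D$ and $\Psi(t)\in\bar D$ are joined by a segment of length $h$ in the cylinder, so $d_{\bar X}(\Phi(t),\Psi(t))\le h$.

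Each of the $n$ coordinate functions of $\Psi^{-1}\colon\bar D\to B\subset\R^n$ is $\bar L$-Lipschitz, and extends via McShane--Whitney to an $\bar L$-Lipschitz function on all of $\bar X$. Assembling them yields an $\bar L\sqrt{n}$-Lipschitz map $\tilde\Psi\colon\bar X\to\R^n$ that agrees with $\Psi^{-1}$ on $\bar D$. I would then set
\[
\pi':=\tilde\Psi|_X\colon X\to\R^n,\qquad \pi'':=\Phi\circ P_B\colon\R^n\to D,
\]
where $P_B\colon\R^n\to B$ is the $1$-Lipschitz nearest-point projection onto the convex ball $B$, and define $\pi:=\pi''\circ\pi'$. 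Then $\pi''$ is $L$-Lipschitz, so $\pi$ is $\lambda_1$-Lipschitz with $\lambda_1:=L\bar L\sqrt{n}$, which depends only on $L$, $A$ and $n$, and the factorization $\pi=\pi''\circ\pi'$ is of the required form.

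For the displacement bound on $D$, pick $x=\Phi(t)\in D$ and set $\bar x:=\Psi(t)\in\bar D$, so that $d_{\bar X}(x,\bar x)\le h$. Since $\tilde\Psi(\bar x)=t\in B$ and $P_B$ fixes $B$ pointwise, a direct computation gives
\[
d(\pi(x),x)\le L\cdot|P_B(\tilde\Psi(x))-t|\le L\cdot|\tilde\Psi(x)-t|\le L\bar L\sqrt{n}\cdot h,
\]
so $\lambda_2:=L\bar L\sqrt{n}\cdot h$ works. To force $\lambda_2\to 0$ as $A\to 0$, I would choose $h:=A$ in the construction of $\bar X$ instead of the fixed choice $h=L$ appearing in the proof of Lemma~\ref{lem_quasiflat_to_bilipschitzflat}. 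The main obstacle is verifying that this change does not destroy the bilipschitz control on $\bar D$: revisiting the distance estimates shows that any excursion path from $(x,0)$ to $(y,0)$ through the top of the cylinder costs at least $2h+\tfrac{1}{L}d_B(x,y)-A$, so any $h\ge A/2$ already keeps $\bar L$ bounded purely in terms of $L$. With $h=A$ we thus obtain $\lambda_2=O(A)$ as $A\to 0$, as required.
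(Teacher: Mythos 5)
Your proof is correct and follows essentially the same route as the paper: thicken $X$ by gluing a cylinder over $B$ as in Lemma~\ref{lem_quasiflat_to_bilipschitzflat}, extend the (coordinates of the) bilipschitz inverse parameterization of $\bar D$ by McShane, and compose back down to $D$, which makes the factorization through $\R^n$ explicit. In fact you are slightly more careful than the paper's one-line argument on the final claim that $\lambda_2\to 0$ as $A\to 0$: the paper uses the fixed cylinder height $L$ and only asserts $d_H(D,\bar D)\le L$, whereas your choice $h=A$ (with the check that the bilipschitz constant of $\bar D$ survives) is exactly the adjustment needed to get a displacement of order $A$.
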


\begin{proof}
	Choose a thickening $\bar X$ as in Lemma \ref{lem_quasiflat_to_bilipschitzflat} and denote by $\bar D\subset \bar X$ the image of the bilipschitz disk $\varphi$ close to $D$.
	By McShane's extension lemma we obtain a Lipschitz retraction $\pi':\bar X\to \bar D$ where the Lipschitz constant is controlled by $L,A,n$. Composing with the natural projection
	$\pi'':\bar X\to X$ we obtain the required map since $D$
	is at distance $\leq L$ from $\bar D$.
\end{proof}

\begin{lem}
	\label{lem_quasiflat_are_quasiminimizers}
Suppose $X$ is a metric space with convex geodesic bicombing and base point $p$. Let $\Phi:B\to X$ be an $n$-dimensional $L$-Lipschitz $(L,A)$-quasidisk with image $D$. 
Then there exist $a,\theta,\Theta$ depending only on $L,A,n$ and $d(p,D)$ 
such that the following holds. There exists an element $T\in\I_{n,loc}(X)$ such that
\begin{itemize}
	\item $\spt(T)\subset D$ and $d_H(\spt(T), D)\leq a$;
	\item (Upper density bound) $\M(T\on B_p (r))\le \Theta\cdot r^n$ whenever $r\ge a$.
	\item (Lower filling bound) Let $S_r=\slc{T,d_p,r}$. Then $\Fill (S_r)\ge \theta\cdot r^n$ whenever $a\leq r\leq d(p,\spt(\D T))$.
\end{itemize}
\end{lem}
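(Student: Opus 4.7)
The plan is to take $T := \Phi_\#\bb B \in \bI_n(X)$, the push-forward of the fundamental class of $B\subset\R^n$ by the Lipschitz quasi-disc $\Phi$. Since $B$ is compact and $\Phi$ is Lipschitz, $T$ has finite mass and compact support, and the inclusions $\spt T\subset\Phi(B)=D$ and $\spt(\D T)\subset\Phi(\D B)=\D D$ are automatic. For the upper mass bound, fix $x_0\in B$ with $d(\Phi(x_0),p)\le d(p,D)+1=:c_0$; the quasi-isometric lower distance bound on $\Phi$ forces $\Phi^{-1}(B_p(r))\subset B_{x_0}(L(r+c_0+A))\subset\R^n$, so
\[
\M(T\on B_p(r)) \;\le\; L^n\cdot\mathcal{L}^n(\Phi^{-1}(B_p(r))) \;\le\; \omega_n L^{2n}(r+c_0+A)^n \;\le\; \Theta\,r^n
\]
for $r\ge a$ with appropriate $a,\Theta$ depending only on $L,A,n,d(p,D)$.

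The remaining two items are obtained by transferring to $\R^n$ via the Lipschitz quasi-retraction $\pi=\pi''\circ\pi':X\to D$ of Corollary~\ref{cor_quasiretraction}. Tracing the construction (through the thickening of Lemma~\ref{lem_quasiflat_to_bilipschitzflat}, in which $\pi'$ restricts on $\bar D$ to $\bar\Phi^{-1}$ and $\Phi(x)\in X\subset\bar X$ lies at distance $L$ from $\bar\Phi(x)\in\bar D$), the composition $f:=\pi'\circ\Phi:B\to\R^n$ is Lipschitz with $|f(x)-x|\le\lambda$ for some $\lambda=\lambda(L,A,n)$. The linear homotopy $h(t,x)=(1-t)x+tf(x)$ together with the homotopy formula in $\R^n$ then gives
\[
f_\#\bb B - \bb B \;=\; \D\bigl(h_\#(\bb{[0,1]}\times\bb B)\bigr) + h_\#\bigl(\bb{[0,1]}\times\D\bb B\bigr),
\]
so $\pi'_\# T=f_\#\bb B$ has density $1$ at every point of $B$ at distance $>3\lambda$ from $\D B$.

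For the support bound, this interior-density statement forces $\pi'_\# T$ to have positive mass in any small Euclidean ball around $f(x)$; pulling back via $\pi'$ and using $|f(x)-x|\le\lambda$ together with the $L$-Lipschitzness of $\Phi$ places a point of $\spt T$ within a controlled distance of $\Phi(x)$ for each $x$ with $d(x,\D B)>3\lambda$, while near-$\D D$ points are handled via $\diam D\le L\diam B+A$. For the filling bound, fix $a\le r\le d(p,\spt\D T)$ and a filling $V\in\bI_n(X)$ of $S_r$; since $T\on B_p(r)$ is another filling, $\pi'_\# V$ and $\pi'_\#(T\on B_p(r))$ are finite-mass integral $n$-currents in $\R^n$ with identical boundary. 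Any $\bb w\in\bI_n(\R^n)$ with $\D\bb w=0$ vanishes (the associated integer function is constant on the connected space $\R^n$, and finite mass forces the constant to be zero), so $\pi'_\# V = \pi'_\#(T\on B_p(r)) = f_\#\bb{U_r}$ where $U_r := \Phi^{-1}(B_p(r))$, giving $\M(V)\ge(\Lip\pi')^{-n}\M(f_\#\bb{U_r})$. The quasi-isometric upper distance bound on $\Phi$ gives $U_r\supset B_{x_0}(\rho)\cap B$ with $\rho\asymp r$; the hypothesis $r\le d(p,\spt\D T)$ combined with the quasi-isometric behavior of $\Phi|_{\D B}$ ensures this ball sits inside $B$, so $\mathcal{L}^n(U_r)\ge\theta' r^n$, and the near-identity property of $f$ yields $\M(f_\#\bb{U_r})\ge\theta\,r^n$.

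The main technical hurdles are the near-identity bookkeeping in $\R^n$ (converting $|f(x)-x|\le\lambda$ into both the interior density statement for $\pi'_\# T$ and the mass lower bound $\M(f_\#\bb{U_r})\ge\theta r^n$, both of which hinge on the homotopy formula above and the uniqueness of top-dimensional finite-mass integral currents in $\R^n$ given by their boundaries) and the verification that $B_{x_0}(\rho)\subset B$ from the hypothesis $r\le d(p,\spt\D T)$, which requires using that $\Phi|_{\D B}$ is itself an $(L,A)$-quasi-isometric embedding so that $\spt(\D T)$ effectively captures $\Phi(\D B)$.
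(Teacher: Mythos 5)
Your proposal is correct and follows essentially the paper's proof: the same current $T=\Phi_\#\bb{B}$, the same use of the quasi-retraction of Corollary~\ref{cor_quasiretraction} and the near-identity map $\pi'\circ\Phi$ for the support statement, and the same direct computation for the upper density bound. The only difference is that for the lower filling bound the paper simply cites \cite[Proposition 3.6]{higherrank} together with Lemma~\ref{lem:density}, whereas you unpack the same mechanism by hand (push a filling of $S_r$ to $\R^n$ via $\pi'$ and use the homotopy formula and the constancy theorem, including the flagged verification that $\spt(\D T)$ coarsely captures $\Phi(\D B)$), which is a legitimate self-contained substitute.
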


\begin{proof}
We take $T=\Phi_\#\bb{B}$ where $\bb{B}\in\I_{n, loc}(\R^n)$ denotes the fundamental class of $B$. Then $\spt(T)\subset\Phi(\spt(\bb{B}))=D$.
By Lemma~\ref{lem_quasiflat_to_bilipschitzflat}, we find a $\bar L$-bilipschitz embedding $\varphi:B\to\bar X$ such that $\Phi=\pi\circ\varphi$ where 
$\pi:\bar X\to X$ is the natural $L$-Lipschitz retraction. Denote by $\psi:\bar X\to \R^n$ a $\tilde L$-Lipschitz extension of $\varphi^{-1}$
provided by McShane.
Then $d(\id_{\R^n},\psi\circ\Phi)\leq\tilde L\cdot L$ and therefore $d_H(\spt(\psi\circ\Phi)_\#\bb{B},B)\leq \tilde L\cdot L$.
Note that  $\spt(\psi\circ\Phi)_\#\bb{B}\subset\psi(\spt T)$. Thus the first item follows since $\Phi\circ\psi$ is uniformly bounded on $D$.

Let $q\in D$ be a point with $d(p,D)=d(p,q)$. Then $\M(T\on B_p(r))\leq\M(T\on B_q(r))\leq L^n(Lr+A)^n$.

The last part follows from \cite[Proposition 3.6]{higherrank} and Lemma~\ref{lem:density}.
\end{proof}

The following lemma is a consequence of \cite[Theorem 2.9]{wenger2005isoperimetric} and \cite[Proposition 2.10]{wenger2005isoperimetric}.

\begin{lemma}
	\label{lem_homotopy}
	Let $X$ be a metric space with a convex geodesic bicombing and let $D\subset X$ an $n$-dimensional $L$-Lipschitz $(L,A)$-quasidisk with $\lambda$-Lipschitz quasiretraction $\pi:X\to D$.
	Then there exists a constant $C>0$ depending only on $L,A$ and $k$ such that the following holds.
	Let $\tau\in \I_k(X)$ and let $h:[0,1]\times \spt(\tau)\to X$ be the geodesic homotopy from $\spt(\tau)$ to $\pi(\spt(\tau))$.
	Suppose that $d(x,\pi(x))\leq \rho$ for all $x\in\spt(\tau)$. Then $h$ induces elements $H\in \I_{k+1}(X)$ and $H'\in \I_{k}(X)$ such that 
	\bit
	\item $\D H=\tau-\pi_\# \tau-H'\quad \text{and}\quad \M(H)\leq C\cdot\lambda^k\cdot\rho\cdot\M(\tau)$;
	\item $\D H'=\D\tau-\pi_\# \D\tau\quad \text{and}\quad \M(H')\leq C\cdot\lambda^{k-1}\cdot\rho\cdot\M(\D\tau)$.
	\eit
	
\end{lemma}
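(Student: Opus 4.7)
The plan is to realize $H$ and $H'$ as pushforwards of product currents by the geodesic homotopy $h$, and then to bound their masses using an \emph{anisotropic} Lipschitz estimate that separates the contributions of the $[0,1]$-factor and the $\spt(\tau)$-factor. Concretely, I would set
$$H := h_\#\bigl(\bb{[0,1]}\times\tau\bigr)\quad\text{and}\quad H' := h_\#\bigl(\bb{[0,1]}\times\D\tau\bigr),$$
where the product of currents and its boundary formula $\D(\bb{[0,1]}\times\tau)=\bb{\{1\}}\times\tau-\bb{\{0\}}\times\tau-\bb{[0,1]}\times\D\tau$ are standard. Since $h(0,\cdot)=\id$ and $h(1,\cdot)=\pi$ on $\spt(\tau)$, pushing forward yields $\D H=\tau-\pi_\#\tau-H'$, and applying the same construction to $\D\tau$ (using $\D\D\tau=0$) yields $\D H'=\D\tau-\pi_\#\D\tau$.

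Next I would estimate the Lipschitz constants of $h$ separately in the two variables. For fixed $t$ and $x,x'\in\spt(\tau)$, convexity of the bicombing gives
$$d(h(t,x),h(t,x'))\le (1-t)\,d(x,x')+t\,d(\pi(x),\pi(x'))\le \lambda\cdot d(x,x'),$$
using $\lambda\ge 1$ and the $\lambda$-Lipschitz property of $\pi$. For fixed $x\in\spt(\tau)$ and $s,t\in[0,1]$, the curve $h(\cdot,x)=\si_{x,\pi(x)}$ is a constant speed geodesic of length $d(x,\pi(x))\le\rho$, so
$$d(h(s,x),h(t,x))\le \rho\cdot|s-t|.$$
Thus $h_t$ is $\lambda$-Lipschitz horizontally and $\rho$-Lipschitz vertically on $[0,1]\times\spt(\tau)$.

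Now I would invoke \cite[Theorem 2.9]{wenger2005isoperimetric} together with \cite[Proposition 2.10]{wenger2005isoperimetric}, which give precisely the anisotropic mass bound for the pushforward of a product current $\bb{[0,1]}\times S$ of dimension $k+1$: there is a constant $C_k$ so that
$$\M\bigl(h_\#(\bb{[0,1]}\times S)\bigr)\le C_k\cdot\bigl(\sup_x\Lip\,h(\cdot,x)\bigr)\cdot\bigl(\sup_t\Lip\,h(t,\cdot)\bigr)^{k}\cdot\M(S).$$
Applying this with $S=\tau$ gives $\M(H)\le C_k\cdot\rho\cdot\lambda^k\cdot\M(\tau)$, and with $S=\D\tau$ gives $\M(H')\le C_{k-1}\cdot\rho\cdot\lambda^{k-1}\cdot\M(\D\tau)$. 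Taking $C=\max\{C_k,C_{k-1}\}$ (which depends only on $k$, and in particular only on $L,A,k$ through $k$) completes the proof.

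The only real content is verifying the two one-variable Lipschitz constants and invoking the correct form of Wenger's product estimate; I do not foresee any essential obstacle, since the bicombing hypothesis is tailored to yield the horizontal bound, and the vertical bound is immediate from the definition of $h$ as a unit-parameter geodesic homotopy. The statement about $H'$ only requires that $\D\tau\in\I_{k-1}(X)$ has finite mass, which is part of the assumption $\tau\in\I_k(X)$.
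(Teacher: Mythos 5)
Your proposal is correct and follows exactly the route the paper intends: the paper gives no proof beyond citing \cite[Theorem 2.9 and Proposition 2.10]{wenger2005isoperimetric}, and your construction $H=h_\#(\bb{[0,1]}\times\tau)$ together with the separate horizontal ($\lambda$, via convexity of the bicombing) and vertical ($\rho$) Lipschitz bounds is precisely what those results require. The only discrepancy is an orientation/sign convention in the product boundary formula, which is cosmetic.
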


\begin{cor}
	\label{cor_homotopy}
	Let $D,\pi,\lambda_1,\lambda_2$ be as in Corollary~\ref{cor_quasiretraction}. 
	Let $X$ be a metric space with a convex geodesic bicombing. 
	Then there exists a constant $C>0$ depending only on $L,A$ and $k$ such that the following holds.
	Let $\tau\in \I_k(X)$  be such that $\spt(\tau)\subset N_{\rho}(D)$ with $\rho>\lambda_2$.
	Then there exist elements $H\in \I_{k+1}(X)$ and $H'\in \I_{k}(X)$ with $\D H=\tau-\pi_\# \tau-H'$ and $\D H'=\D\tau-\pi_\# \D\tau$. Moreover,
	\[\M(H)\leq C\cdot\lambda_1^k\cdot\rho\cdot\M(\tau).\]
	
\end{cor}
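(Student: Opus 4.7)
The plan is to reduce the corollary to a direct application of Lemma~\ref{lem_homotopy}. The only thing that needs to be checked is that the quasi-retraction $\pi$ has displacement comparable to $\rho$ on the full $\rho$-neighborhood $N_\rho(D)$, not just on $D$ itself.

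First I would establish the displacement bound. Given any $x\in N_\rho(D)$, pick $y\in D$ with $d(x,y)\le\rho$. Using that $\pi$ is $\lambda_1$-Lipschitz and that $d(y,\pi(y))\le\lambda_2$ by Corollary~\ref{cor_quasiretraction}, the triangle inequality gives
\[
d(x,\pi(x))\le d(x,y)+d(y,\pi(y))+d(\pi(y),\pi(x))\le \rho+\lambda_2+\lambda_1\rho.
\]
Because we are assuming $\rho>\lambda_2$, this can be absorbed into $d(x,\pi(x))\le (2+\lambda_1)\,\rho=:\rho'$. Hence the displacement of $\pi$ on $\spt(\tau)\subset N_\rho(D)$ is uniformly bounded by $\rho'$, with $\rho'$ a constant multiple of $\rho$ depending only on $L,A,n$ (since $\lambda_1$ does).

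Next I would invoke Lemma~\ref{lem_homotopy} with the displacement bound $\rho'$. Applied to the geodesic homotopy $h\colon[0,1]\times\spt(\tau)\to X$ from $\spt(\tau)$ to $\pi(\spt(\tau))$, it produces currents $H\in\I_{k+1}(X)$ and $H'\in\I_k(X)$ satisfying the two boundary identities $\D H=\tau-\pi_\#\tau-H'$ and $\D H'=\D\tau-\pi_\#\D\tau$, together with the mass estimate
\[
\M(H)\le C_0\cdot \lambda_1^{\,k}\cdot \rho'\cdot\M(\tau).
\]
Substituting $\rho'=(2+\lambda_1)\rho$ and folding $\lambda_1$ (which depends only on $L,A,n$) and the factor $2+\lambda_1$ into a new constant $C$ depending only on $L,A,k$ yields $\M(H)\le C\cdot \lambda_1^k\cdot \rho\cdot \M(\tau)$, exactly as claimed.

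There is no real obstacle here beyond the bookkeeping of constants: the assumption $\rho>\lambda_2$ is precisely what is needed so that $\lambda_2$ does not appear as an additive term independent of $\rho$, and the convexity of the bicombing guarantees that the geodesic homotopy $h$ is well defined and admissible as input to Lemma~\ref{lem_homotopy}.
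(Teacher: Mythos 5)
Your proposal is correct and follows the same route as the paper: both reduce to Lemma~\ref{lem_homotopy} after bounding the displacement $d(x,\pi(x))\le(2+\lambda_1)\rho$ for $x\in\spt(\tau)$ via exactly the same triangle-inequality computation through a nearest point of $D$. The only difference is bookkeeping of how the factor $2+\lambda_1$ is absorbed into the constant, which is immaterial.
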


\begin{proof}
	By Lemma~\ref{lem_homotopy}, it suffices to show $d(x,\pi(x))\le C\cdot\rho$ for any $x\in \spt(\tau)$. 
	Let $z$ be a point in $D$ such that $d(x,z)=d(x,D)\le \rho$. Then 
	$d(x,\pi(x))\le d(x,z)+d(z,\pi(z))+d(\pi(z),\pi(x))\le \rho+ \lambda_2+\lambda_1\cdot d(z,x)\le \rho+ \rho+\lambda_1\cdot\rho=(2+\lambda_1)\cdot\rho$.
\end{proof}

\begin{cor}
	\label{cor_homotopy_topdim}
	Let $D,\pi,\lambda_1,\lambda_2$ be as in Corollary~\ref{cor_quasiretraction}. 
	Then there exists a constant $C>0$ depending only on $L,A$ and $n$ such that the following holds.
	Let $\sigma\in \bZ_n(X)$  be such that $\spt(\sigma)\subset N_{\rho}(D)$ with $\rho>\lambda_2$. Then
	\[\Fill(\si)\leq C\cdot\lambda_1^n\cdot \M(\si). \]
	
\end{cor}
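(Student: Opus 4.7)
The plan is to apply Corollary~\ref{cor_homotopy} with $\tau=\sigma$ itself and then argue that the two defect terms appearing there -- the projected cycle $\pi_\#\sigma$ and the lower-dimensional remainder $H'$ -- both vanish, so that the resulting $(n+1)$-dimensional homotopy chain $H$ is an honest filling of $\sigma$.

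Concretely, I would first plug $\tau=\sigma$ and $k=n$ into Corollary~\ref{cor_homotopy} to produce currents $H\in\I_{n+1}(X)$ and $H'\in\I_n(X)$ with
\[
\partial H=\sigma-\pi_\#\sigma-H',\qquad \partial H'=\partial\sigma-\pi_\#\partial\sigma.
\]
Since $\sigma$ is a cycle, the second identity reduces to $\partial H'=0$; moreover the underlying Lemma~\ref{lem_homotopy} supplies the mass bound $\M(H')\le C\cdot\lambda_1^{n-1}\cdot\rho\cdot\M(\partial\sigma)=0$, forcing $H'=0$ outright.

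Next I would show that $\pi_\#\sigma=0$. By Corollary~\ref{cor_quasiretraction}, $\pi$ factors as $\pi=\pi''\circ\pi'$ with $\pi':X\to\R^n$ Lipschitz, so $\pi'_\#\sigma$ is an $n$-dimensional integral cycle in $\R^n$ whose support lies in the compact set $\pi'(\spt(\sigma))$ (note $\spt(\sigma)\subset N_\rho(D)$ is bounded since $D$ is the image of a ball). By the structure theorem recalled in Section~\ref{subsect:currents} (Theorem~7.2 of~\cite{Lan3}), $\pi'_\#\sigma=\bb{u}$ for some integer-valued function $u$ of locally bounded variation. The identity $\partial\bb{u}=\pi'_\#\partial\sigma=0$ forces the distributional gradient of $u$ to vanish on $\R^n$, so $u$ is a.e.\ constant; compact support then forces $u\equiv 0$, whence $\pi_\#\sigma=\pi''_\#(\pi'_\#\sigma)=0$.

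With both defect terms eliminated, we have $\partial H=\sigma$, so $H$ is a filling of $\sigma$ with $\Fill(\sigma)\le\M(H)$, and the mass bound from Corollary~\ref{cor_homotopy} yields the stated estimate. The main (really the only) nontrivial step is the vanishing of $\pi_\#\sigma$: this is where the top-dimensionality hypothesis $\dim(\sigma)=n=\dim(D)$ is used essentially, via the acyclicity of $\R^n$ for compactly supported cycles in top degree. Everything else is a direct unpacking of the already-established homotopy construction.
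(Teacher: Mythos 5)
Your proof is correct and follows essentially the same route as the paper: apply Corollary~\ref{cor_homotopy} to $\sigma$ and observe that, since $\pi$ factors through a Lipschitz map to $\R^n$, the pushed-forward top-dimensional cycle is trivial, so the homotopy chain $H$ is itself a filling with the stated mass bound. You merely make explicit two points the paper leaves implicit, namely that $H'=0$ (via the mass bound of Lemma~\ref{lem_homotopy} applied to $\partial\sigma=0$) and the constancy-theorem argument showing a compactly supported $n$-cycle in $\R^n$ vanishes.
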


\begin{proof}
Corollary~\ref{cor_homotopy} provides a controlled homology $H$ between $\si$ and $\pi_\# \si$.
By Corollary~\ref{cor_quasiretraction}, $\pi$ factors as $\pi=\pi''\circ\pi'$ with a Lipschitz map $\pi':X\to\R^n$.
However, $\pi'_\# \si$ is a top-dimensional cycle
and therefore trivial. It follows that $H$ is a filling of $\si$ as required.
\end{proof}

\section{Informal discussion of the proof of Theorem~\ref{thm_sublinear_close_intro}}~
\label{sec_informal_proof}

As we have mentioned in the introduction, the proof of Theorem~\ref{thm_sublinear_close_intro} follows the standard strategy 
for uniqueness of tangent cones, namely an induction on scales.
However, our implementation of that strategy 	is quite different from previously treated cases. The induction step uses 
properties of Morse quasiflats in an essential way --- the coarse piece decomposition.

Let $Q$ be a Morse quasiflat in a $\cat(0)$ space $X$. Choose a basepoint $p\in X$, and let $C_p (\si)$ denote the geodesic cone based at $p$ 
over a sphere $\si\subset Q$ of very large radius $r_0$. We want to understand the relative position of $C_p (\si)$ and $Q$
on all scales $r<r_0$. Denote by $\si_r$ and $S_r$ the slice of $C_p(\si)$ respectively $Q$ at distance $r$ from $p$. 
Let $T_r=C_p(\si_r)$ and $Q_r=Q\cap B_p(r)$. Note that the occurring objects naturally carry the structure of currents, and in the following discussion
we will use the same notation to denote the respective cycles, currents or sets. (It is actually quite important that $\si_r$ and $S_r$ are cycles, see Remark~\ref{rmk:cycle}.)

In principle, we would like to show that $T_{r}$ and $Q_r$ are Hausdorff close on scale $r$. However, Hausdorff distance does not behave well in our setting
and we are naturally led to use cycles or currents rather than subsets. The reader may wish to think of $Q_0:=Q_{r_0}$ and $T_0:=T_{r_0}$
as relative cycles in $(X,X\setminus B_p(r))$ for every $r<r_0$.

To motivate the setup for the induction argument, we examine the possibilities for the large-scale behavior of our relative cycles.  By Wenger-compactness, at large scales, our configuration is ``close to'' a configuration in an asymptotic cone. Hence we discuss the setting in asymptotic cones first, but
retain to our previous notation. When switching to an asymptotic cone, 
by the definition of Morse quasiflats, $Q$ becomes a bilipschitz flat which is ``homologically injective'', for every $q\in Q$ the inclusion induces injective maps
\[H_n(Q,Q\setminus\{q\})\to H_n(X,X\setminus\{q\}).\]
By \cite[Proposition 6.11]{HKS}, this topological condition translates to a ``piece decomposition'' 
for nearby relative cycles\footnote{Strictly speaking, the transition from homological injectivity to piece decomposition carried out in \cite{HKS}, is not needed here and is only used for illustratory purposes.}.
For any $n$-chain $\al$, 	
whose boundary $\D \al$ is a cycle in $Q$, there is a \emph{piece decomposition} 
\begin{equation}
\al=\ga+\beta
\label{eq:pdec}
\end{equation}

\noindent
where $\ga$ is the canonical filling of $\D\al$ inside $Q$, and $\beta$ is a cycle carried by $X\setminus Q$. 
Moreover, there is ``no cancellation'' between $\ga$ and $\beta$  in the sense that volume --- counted with multiplicities --- behaves additively. 
This can be made rigorous in the language of currents. 
The summands $\ga$ and $\beta$ in such a decomposition are called ``pieces'' of $\al$.
Note that this is indeed a non-trivial property of $Q$.

To further illustrate the piece decomposition, let us consider the case $n=1$. See Figure~\ref{fig:1} below.
In this case, the bilipschitz flat $Q$
is a line $\ell$, and the homological condition simply means that each point $x\in Q$ is a cut point of $X$, separating the two halves of $Q$.
To explain the piece decomposition, let $p$ and $q$ be points on $Q$ and let $\al$ be a path in $X$ from $p$ to $q$.
If $\ga$ denotes the arc in $Q$, with endpoints $p$ and $q$, then clearly $\al$ has to cover $\ga$. More precisely, when counting multiplicities,
$\al$ has to pass through every point on $\ga$ exactly once. So we expect a decomposition $\al=\ga+\beta$, where there is ``no cancellation''
between $\ga$ and $\beta$. Moreover, $\beta$ is a cycle, as whenever $\alpha$ leaves $Q$, it has to come back at exactly the same point. 

\begin{figure}[h!]
	\centering
	\includegraphics[scale=0.8]{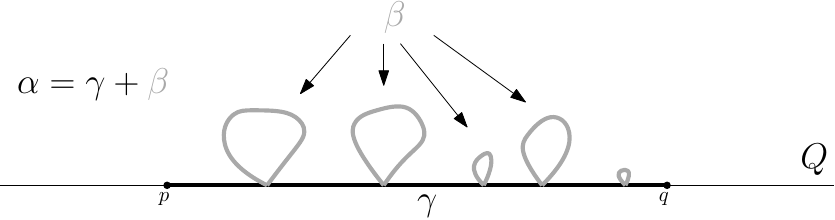}
	\caption{Figure~\ref{fig:1}: Piece decomposition in dimension 1}
	\label{fig:1}
\end{figure}

There is a  relative version of the piece decomposition which states the following. If $\al$ is a relative cycle in $(X,X\setminus B_p(r))$ which 
is relatively homologous to $Q_0$ in $(X,X\setminus B_p(r))$,
then $Q_r$ is a piece of $\al$. Hence for every $r<r_0$ we obtain the piece decomposition
\begin{equation}
T_r=Q_r+\mathcal{B}_r
\label{eq:relpdec}
\end{equation}
where $\mathcal{B}_r$ is a relative cycle in $(X,X\setminus B_p(r))$.
An immediate consequence is a sliced version, namely each cycle $\si_r$ admits a piece decomposition:
\begin{equation}
\si_r=S_r+\beta_r.
\label{eq:slpdec}
\end{equation}
where $\beta_r$ is a cycle carried outside of $Q$.

For a concrete example, consider the space $Y$, obtained from $\R^2$ and $\R^3$ by identifying along a straight line $\ell$. Take a base point $o\in\ell$. The bilipschitz Morse quasiflat $Q$
is given by the $\R^2$-part of $Y$.
To illustrate the piece decomposition in $Y$ (see Figure~\ref{fig:2}), consider a 2-chain $\al$, whose boundary is the fundamental class of the unit circle in the $\R^2$-part.
Then $\al$ can be written as 
$\al=\ga+\beta$, where $\ga$ is the fundamental class of the unit disk in the $\R^2$-part and $\beta$ is a 2-cycle which is entirely contained in the $\R^3$-part.

\begin{figure}[h!]
	\centering
	\includegraphics[scale=0.8]{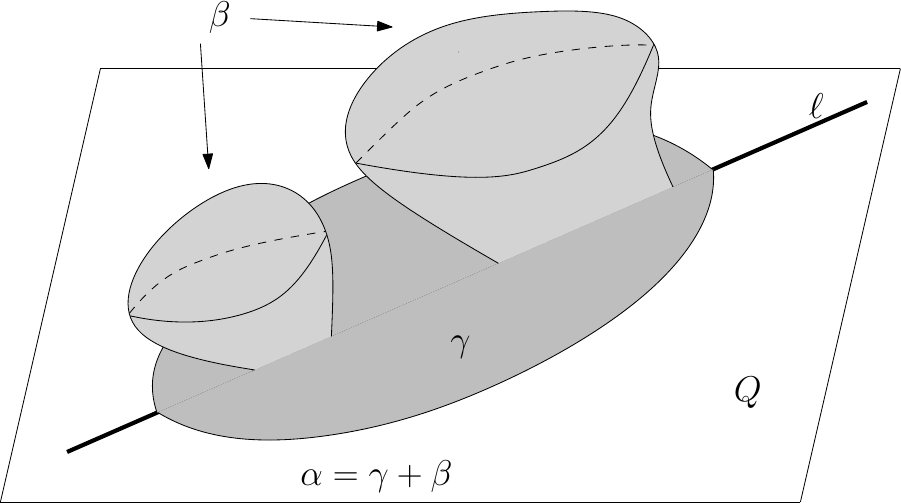}
	\caption{Figure~\ref{fig:2}: Piece decomposition in dimension 2}
	\label{fig:2}
\end{figure}
For the relative/sliced version, let us consider a conical example in $Y$ as follows. See Figure~\ref{fig:3} below. Let $\si$ be the sum of the fundamental class of the unit sphere
in the $\R^2$-part and an arbitrary nontrivial 1-cycle $\si'$ in the unit sphere around $o$ in $Y$. Let $\al$ be the relative 2-cycle obtained by coning off
$\si$ at $o$. Since $\si'$ bounds in the unit sphere of $Y$, $\al$ and $Q$ are relatively homologous in $(Y,Y\setminus B_o(1))$.  
By the (sliced) piece decomposition, $\al$ contains the fundamental class of the $r$-disk in the $\R^2$-part as a piece, which corresponds precisely to $Q_r$;
and the $r$-slices of $\al$ contain $S_r$ as a piece.

\begin{figure}[h!]
	\centering
	\includegraphics[scale=0.8]{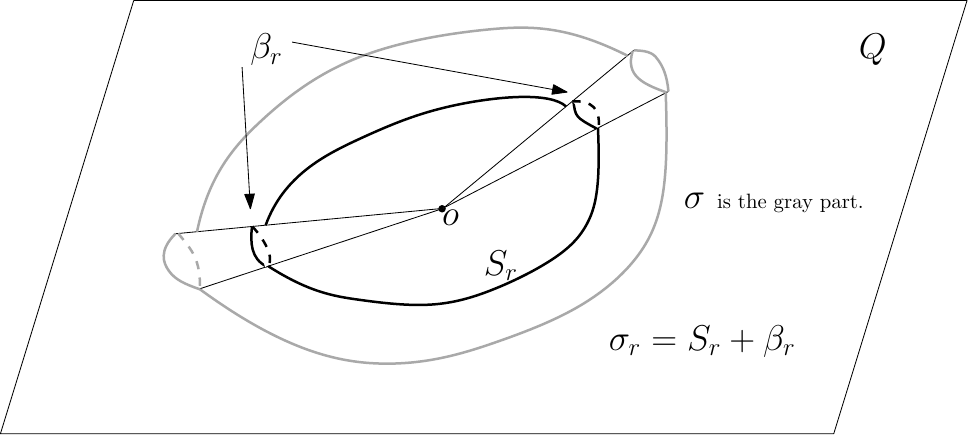}
	\caption{Figure~\ref{fig:3}: relative/slice version of piece decomposition}
	\label{fig:3}
\end{figure}

A naive attempt for the induction on scales argument might be to show that the normalized flat distance between slices stays small. However,
we want to point out here that in our example
the normalized flat distances between the $r$-slices of $\al$ and the fundamental classes of the $r$-spheres in the $\R^2$-part
is constant in $r$ (it equals the normalized filling volume of $\si'$), i.e. it does not decay as $r$ shrinks. 
  In passing from the asymptotic cone to the original space some approximation error is inevitable, and so the borderline monotonicity (of normalized flat distance between slices) in the asymptotic cone can only yield an estimate which allows for some increase when passing from one scale to the next lower scale.  Since there is no a priori bound on the number of scales involved, such an approach will fail. 

The key ingredient for our argument is the following consequence of the piece decomposition.
If $\si_{2r}$ and $S_{2r}$ are homologous in $X\setminus B_p(r)$, then  $S_{r}$ is a piece of $\si_{r}$.
The topological assumption here can be guaranteed by a quantitative bound on flat distance  between
$\si_{2r}$ and $S_{2r}$.

Let us conclude the asymptotic cone discussion and return to the original space.
All assertions that were true in asymptotic cones are now degraded to approximate assertions. 
Moreover, qualitative statements need to be replaced by quantitative ones.  To simplify notation in the following, we will use a hat to indicate normalized quantities. For instance
$\hat M(W)$ for an $n$-current $W$ in the $r$-ball around $p$ refers to $\M(W)/r^n$.
To state the required coarse piece decomposition for large scales $r$, we choose small constants $\eps$ and $C$ with $\eps\ll C$.
If $\al_{2r}$ denotes a current with $\hat\F(\D\al_{2r},S_{2r})<C$, and $\al_r$ denotes its restriction to the $r$-ball around $p$, then there exists a piece decomposition
\begin{equation}
\al_r=\ga_r+\beta_r,\quad \M(\al_r)=\M(\ga_r)+\M(\beta_r), \quad \hat\F(\ga_r,Q_r)<\eps. 
\label{eq:cpdec}
\end{equation}
Note that $\ga_r $ is typically not a relative cycle in $(X,X\setminus B_p(r))$. So $\hat\F(\ga_r,Q_r)<\eps$ means the following. 
There is a chain $\tau_r$ carried in $B_p(r)$ with $\hat\M(\tau_r)\le \eps $ such that $\ga_r+\tau_r$ becomes a relative cycle.
Moreover there is a chain $W_r$ with $\hat\M(W_r)\le \eps$ such that $\D W_r =(\ga_r+\tau_r)-Q_r$ modulo a chain in $X\setminus B_p(r)$. 
Similarly, $\beta_r$ differs from a relative cycle by $\tau_r$.

Again, we obtain a corresponding version for slices.
If the normalized flat distance between $\si_{2r}$ and $S_{2r}$ is at most $C$, then 
\begin{equation}
\si_r=\mu_r+\nu_r, \quad \M(S_r)=\M(\mu_r)+\M(\nu_r), \quad \hat\F(\mu_r,S_r)<\eps.
\label{eq:scpdec}
\end{equation}

This suggests a potential induction scheme:   
If  $S_{r_{k}}$ is $C$-close to a piece of $\si_{r_{k}}$, then 
$S_{r_{k+1}}$ is $C$-close to a piece of $\si_{r_{k+1}}$ where $r_k=\frac{r_0}{2^k}$.
This is, in a somewhat oversimplified form, what we are proving.

At this point one might get the impression that the (sliced) coarse piece decomposition directly provides a proof
for the induction step.
However, in addition to technical difficulties, the implementation of this strategy has to overcome 
a conceptional issue, which stems from the incompatibility of the coarse piece decomposition and the conical structure.

More precisely, since the piece $\mu_{r}$ is not a cycle, the flat distance estimate $\hat{F}(\mu_r,S_r)<\eps$
means that there exists a current $\mathcal{E}_r$ --- thought of as an error term --- such that 
$\mu_r+\mathcal{E}_r$ is a cycle, the flat distance estimate $\hat{F}(\mu_r+\mathcal{E}_r,S_r)<\eps$ holds, and $\mathcal{E}_r$ is small,
$\hat\M(\mathcal{E}_r)<\eps$. To understand the trouble here, we remind the reader that our ultimate goal is to show that our quasiflat lies close to
a geodesic cone. While the error term is small, it does not lie in the cone anymore.
Now let us examine the information provided by the coarse piece decomposition for the induction step. 
Instead of the cycle $\si_{r_{k+1}}$, we consider an auxiliary cycle $\al_{r_{k+1}}$, the radial projection to scale $r_{k+1}$
of the cycle $\mu_{r_{k}}+\mathcal{E}_{r_{k}}$. We know that, up to a small error, $\al_{r_{k+1}}$ contains a good piece $\ga_{r_{k+1}}$
which lies close to  $S_{r_{k+1}}$, and a bad piece $\beta_{r_{k+1}}$. Unfortunately, the good piece might contain the projection of the error $\mathcal{E}_{r_{k}}$ from the last scale. Therefore,  an application of the coarse piece decomposition increments the size of the total error for $\si_{r_{k+1}}$ by $\eps$.
From the overall perspective, this means that we deviate from the geodesic cone even further.
While a single such error is small by choice, we definitely must prevent an accumulation of these errors.

We resolve this issue as follows. First, if the flat distance estimate  $\hat\F(\si_{r_{k+1}},S_{r_{k+1}})<C$ happens to hold, we do not have to rely on the coarse piece decomposition at all, and therefore we do not have to introduce an error term. So let us assume
the flat distance estimate fails,  $\hat\F(\al_{r_{k+1}},S_{r_{k+1}})\geq C$, and let's bring in 
the coarse piece decomposition. Then, up to a small error, $\hat\F(\si_{r_{k+1}},S_{r_{k+1}})$ is equal to the filling volume of the bad piece
$\beta_{r_{k+1}}$. By the isoperimetric inequality, the normalized mass of $\beta_{r_{k+1}}$ has to have a definite size, say $\delta$.
This leads to a decay in normalized mass of the occurring good pieces. Namely $\hat\M(\ga_{r_{k+1}})\leq\hat\M(\si)-N\cdot\delta$
where $N$ is the number of previous scales where the flat distance estimate failed and we had to rely on the coarse piece decomposition.
Since the good piece has to be non-trivial, we obtain a uniform bound on the number of such scales.
Hence error terms do not accumulate and our strategy succeeds.

\bigskip

\begin{remark} 
	In the special case when $Q$ is top-dimensional, the above discussion simplifies considerably. 
	Considering asymptotic cone first, the piece decomposition becomes trivial as the $\beta$-term in \eqref{eq:pdec} disappears, hence \eqref{eq:relpdec} reduces to $T_r=Q_r$ and the respective sliced version \eqref{eq:slpdec} reduces to $\si_r=S_r$. Inside the space one no longer need the coarse piece decomposition. As in \cite{higherrank}, one can argue directly $\hat\F(\al_r,Q_r)<\eps$ through induction on scales, which leads to a much simpler argument.
\end{remark}

\section{Morse quasiflats and coarse decompositions}
\label{sec_definition}
In this section we recall the notion of Morse quasiflats from \cite{HKS}, as well as the coarse neck property and coarse piece property of Morse quasiflats.

Let $X$ be a complete metric space with a convex geodesic bicombing and let $F\subset X$ be the image of an $L$-bilipschitz
embedding of a closed convex subset of $\R^n$. 
We begin with a purely topological condition,\ cf. \cite[Definition~6.8]{HKS}.

\begin{definition}[Full support]
	\label{def_full_support}
 We say $F$ has {\em full support}, if the map
 \[ H_n(F,F\setminus\{q\},\mathbb Z)\to  H_n(X,X\setminus\{q\},\mathbb Z)\] 
 on reduced singular homology is injective for each $q\in F\setminus\D F$. 
\end{definition}

\begin{definition}[Morse quasiflat]
	\label{def_morse}
 An $n$-dimensional quasiflat $Q\subset X$ is called {\em Morse}, if for any asymptotic cone $X_\om$ of $X$ the ultralimit 
$Q_\om\subset X_\om$ of $Q$ has full support in $X_\om$. 

Let $\mathcal{C}$ be a collection of $n$-dimensional quasidisks or quasiflats with uniform quasi-isometric constants. $\mathcal{C}$ is \emph{uniformly Morse} if for any limit $D_\om$ of elements from $\mathcal{C}$ in the asymptotic cone $X_\om$ has full support.
\end{definition}

We now review several characterizations of Morse quasiflats. We start with the coarse neck property. To get an intuition, we refer the reader to the informal discussion in Section~\ref{sec_informal_proof}. In particular, in the decomposition $\alpha=\gamma+\beta$ of Figure~\ref{fig:1} and Figure~\ref{fig:2}, the places where the two pieces $\gamma$ and $\beta$ touch can be thought of as ``necks'' of $\al$. The coarse version of ``necks'' in the space rather than in the asymptotic cone, is characterized by the following definition.

\begin{definition}
	\label{def_tcp}
	An $n$-dimensional quasiflats $Q$ has the \emph{coarse neck property} (CNP), if there exists a constant $C_0>0$, and
for any point $p\in X$ and  given positive constants $\rho$ and $C$, 
there exists $R_{\text{CNP}}=R_{\text{CNP}}(p,\rho,C)$ such that for any $R_{\text{CNP}}\leq R$ the following holds. 

Let $\tau\in \I_n(B_p(C R)\setminus N_{\rho R}(Q))$ with $\si:=\D \tau$ be such that
\[\M(\tau)\le C\cdot R^n,\quad \M(\si)\le C\cdot R^{n-1}\quad \text{and}\quad \spt(\si)\subset N_{2\rho R}(Q).\]

Then we have 
\[\Fill(\si)\le C_0\cdot\rho R\cdot\M(\si).\] 
Note that the definition of CNP depends on the parameter $C_0$ and the function $R_{\text{CNP}}$.	

Similarly we define the \emph{coarse neck property} for a local integral current $T\in \I_{n,loc}(X)$ by replacing $Q$ with $\spt(T)$, and replacing $R_{\text{CNP}}\le R$ by $R_{\text{CNP}}\leq R\leq\frac{d(p,\spt(\D T))}{C}$ (so that $B_p(CR)$ is disjoint from $\D T$).
\end{definition}

The following is a consequence of \cite[Theorem 9.10]{HKS}.

\begin{proposition}
	\label{cor_small_neck}
	Let $X$ be a proper convex geodesic metric space.
	Let $\mathcal{D}$ be a family of quasidisk or quasiflats in $X$ with uniform quasi-isometric constants. Then $\mathcal{D}$ is uniformly Morse if and only each element of $\mathcal{D}$ satisfies the coarse neck property for some uniform $C_0$ and $R_{\text{CNP}}$.
\end{proposition}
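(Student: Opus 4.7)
The plan is to deduce both directions from the characterization of full support given in \cite[Theorem 9.10]{HKS}, by translating between the qualitative topological condition in the asymptotic cone and the quantitative coarse filling condition in $X$ via standard ultralimit arguments.

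For the direction ``uniformly Morse $\Rightarrow$ uniform CNP'', I would argue by contradiction. Fix $\rho, C>0$ and suppose no choice of $C_0$ and $R_{\text{CNP}}$ works. Then there exist sequences $D_k \in \mathcal{D}$, basepoints $p_k\in X$, scales $R_k\to\infty$, and currents $\tau_k \in \I_n(B_{p_k}(CR_k)\setminus N_{\rho R_k}(D_k))$ with $\M(\tau_k)\le C R_k^n$, $\M(\partial \tau_k)\le C R_k^{n-1}$, $\spt(\partial \tau_k) \subset N_{2\rho R_k}(D_k)$, yet $\Fill(\partial \tau_k)/(R_k\cdot\M(\partial \tau_k))\to\infty$. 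Rescale $X$ by $1/R_k$ and pass to an asymptotic cone $X_\om$ based at $(p_k)$. By the uniform quasi-isometric constants and Lemma~\ref{lem_lip_quasidisc}, the $D_k$ subconverge to a Lipschitz quasidisc/flat $D_\om\subset X_\om$, which by hypothesis has full support. The mass bounds let the $\tau_k$ subconverge to an integral current $\tau_\om$ in $X_\om$ supported outside $N_\rho(D_\om)$ whose boundary lies near $D_\om$ but admits no filling comparable to $\M(\partial \tau_\om)$; by \cite[Theorem 9.10]{HKS} this contradicts full support of $D_\om$.

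For ``uniform CNP $\Rightarrow$ uniformly Morse'', again argue by contradiction. If $\mathcal{D}$ is not uniformly Morse, a subsequence $D_k$ has ultralimit $D_\om\subset X_\om$ which fails full support at some interior point $q$. By \cite[Theorem 9.10]{HKS} this failure is witnessed by an integral current $\tau_\om \in \I_n(X_\om)$ supported off $D_\om$ whose boundary lies near $D_\om$ and has a filling whose volume is small relative to $\M(\partial \tau_\om)$ times the scale. Approximating $\tau_\om$ by currents $\tau_k$ in $X$ (after rescaling by suitable $R_k$), and choosing $\rho, C$ from the support and mass profile of $\tau_\om$, these $\tau_k$ satisfy the hypotheses of CNP but violate the conclusion $\Fill(\partial \tau_k) \le C_0\cdot \rho R_k \cdot \M(\partial \tau_k)$ for any fixed $C_0$ as $k\to\infty$, contradicting uniform CNP.

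The main obstacle will be ensuring that the passage to and from the asymptotic cone preserves the precise quantitative shape of Definition~\ref{def_tcp} — namely that the mass bounds, the annular support constraint $B_{p}(CR)\setminus N_{\rho R}(Q)$, and the boundary support in $N_{2\rho R}(Q)$ all survive the ultralimit and its inverse, and that the current produced by \cite[Theorem 9.10]{HKS} in $X_\om$ can be realized as an ultralimit of admissible test currents in $X$ with controlled mass (the latter uses a standard slicing argument via the coarea inequality to cut off a current close to $D_\om$ into an annulus of width $\rho R$ without blowing up mass). Once these bookkeeping steps are in place, the equivalence follows directly from \cite[Theorem 9.10]{HKS}.
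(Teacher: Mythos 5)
The paper does not actually prove this proposition: it is stated as an immediate consequence of \cite[Theorem 9.10]{HKS}, which (in the companion paper) already establishes the equivalence between uniform full support in asymptotic cones and the coarse neck property with uniform data. Your proposal instead treats \cite[Theorem 9.10]{HKS} as a cone-level criterion for full support and tries to rebuild the space-to-cone transfer yourself by ultralimit compactness. That transfer is indeed the mechanism behind the cited theorem, so your strategy is not wrong in spirit, but as written it both duplicates the content of the reference and leaves the genuinely hard steps unproved: the ``realization'' step in the converse direction (producing admissible test currents in $X$ from a failure of full support in $X_\om$, with the annular support constraint and mass bounds intact) is exactly where the work in \cite{HKS} lives, so invoking Theorem 9.10 there while also claiming to re-derive the equivalence is close to circular.

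There is also a concrete gap in your forward direction. The CNP conclusion $\Fill(\si)\le C_0\cdot\rho R\cdot\M(\si)$ is multiplicative in $\M(\si)$, so a contradiction sequence $\tau_k$ with $\Fill(\si_k)/(R_k\M(\si_k))\to\infty$ may perfectly well have $\M(\si_k)/R_k^{n-1}\to 0$. After rescaling by $1/R_k$ and passing to the ultralimit, the boundary then vanishes, $\partial\tau_\om=0$, and no contradiction with full support of $D_\om$ is obtained; your argument silently assumes the limit current has nontrivial boundary near $D_\om$. This degenerate regime has to be excluded separately, e.g.\ by the isoperimetric inequality (Theorem~\ref{thm:isop-ineq}): if $\M(\si)\le(\rho R)^{n-1}$ then $\Fill(\si)\le c_0\M(\si)^{n/(n-1)}\le c_0\cdot\rho R\cdot\M(\si)$, so CNP holds for free there and the compactness argument only needs to treat boundaries of definite normalized mass. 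You should also be more careful with the quantifiers when negating uniform CNP (the constant $C_0$, the basepoint $p$, the parameters $\rho,C$ and the onset radius all vary), and note that basepoints with $d(p_k,D_k)/R_k\to\infty$ make the hypothesis vacuous rather than contributing a limit quasidisc.
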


In the situation of this proposition, the pair $(C_0, R_{\text{CNP}})$
will be referred to as the {\em Morse data} for $\mathcal{D}$.

The above summarizes what we need from \cite{HKS} in this paper. More discussion on alternative characterizations of Morse quasiflats, as well as connections to the literature on Morse quasigeodesics, can be found in the introduction of \cite{HKS}.

Now we deduce the ``coarse piece decomposition'' mentioned in Section~\ref{sec_informal_proof} from the coarse neck property (Lemma~\ref{lem_decomposition_lemma}). The following is  a version of \cite[Lemma 9.2]{HKS} for quasidisks.

\begin{lem}
	\label{lem_decomposition_lemma}
	Let $D\subset X$ be an $n$-dimensional $L$-Lipschitz $(L,A)$-quasidisk with CNP (cf. Definition~\ref{def_tcp}). 
	Let $p\in X$ be a base point. 
	
	For given $\eps,C>0$, there exists $R_{\text{neck}}>0$ depending only on $\eps, C, d(p,D),$ $L, A, n, X$ and the CNP parameter of $D$ such that the following holds for any $R_{\text{neck}}\le R\le\frac{d(p,\D D)}{C}$. 
	
	Let $T\in \I_{n,c}(X)$ with $\spt(\D T)\subset N_C(D)$, $\spt(T)\subset B_p(CR)$ and $\M(T)\le C\cdot R^n$. 
	Suppose there exists $T'\in \I_{n,c}(X)$ such that $\D T=\D T'$ with $\spt(T')\subset N_C(D)$ and $\M(T')\le C\cdot R^n$. Then $T$ admits a {\em coarse piece decomposition}: a piece decomposition $T=U+V$, 
	induced by the distance function $d_D$, with additional properties. Set $\si=:\partial U-\partial T=-\partial V$ and let $\omega$ be a minimal filling of $\si$. Then
	\begin{enumerate}
		\item $\Fill(\si)\le \eps\cdot R^{n}$;
		\item $\Fill(U+\omega-T')\le  \eps\cdot R^{n+1}$;
		\item $\spt(U+\omega-T')\subset N_{\eps R}(D)$. 
		\end{enumerate}
\end{lem}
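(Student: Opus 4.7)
The strategy is to cut $T$ transversally along a level set of the distance function $d_D$ at a carefully chosen radius. The inner piece $U$ will lie close to $D$, while the outer piece $V$ collects the potential ``bubbles''. The Morse hypothesis, in the guise of the coarse neck property (CNP), controls $V$ and forces the interface slice $\sigma$ to have small filling; this gives (1). Once the minimal filling $\omega$ of $\sigma$ is located near $D$ via the isoperimetric support bound of Theorem~\ref{thm:plateau}, the cycle built from $U$, $\omega$ and $T'$ lies near $D$, and the quasi-retraction machinery of Section~\ref{sec:quasiflats in metric spaces} yields (2) and (3).

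In detail, pick auxiliary small parameters $0<\rho\ll\eps$ (eventually functions of $\eps,C,L,A,n$ and the Morse data) and assume $R$ is so large that $\rho R/2>C$. Applying the coarea inequality to the $1$-Lipschitz function $d_D$ on $T$ produces a radius $r\in[\rho R/2,\rho R]$ with
\[
\M(\slc{T,d_D,r})\leq \frac{2\M(T)}{\rho R}\leq \frac{2C}{\rho}\,R^{n-1}.
\]
Set $U:=T\on\{d_D\leq r\}$ and $V:=T\on\{d_D>r\}$. Since $\spt(\partial T)\subset N_C(D)\subset\{d_D\leq r\}$, the restriction rules give $\sigma=\partial U-\partial T=\pm\slc{T,d_D,r}$, whence $\spt(\sigma)\subset N_{\rho R}(D)$, $\spt(V)\subset B_p(CR)\setminus N_{\rho R/2}(D)$, $\M(V)\leq CR^n$, and $\M(\sigma)\leq(2C/\rho)R^{n-1}$. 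Applying CNP to $V$ with constant $C'=C'(\rho,C)$, valid once $R\geq R_{\text{CNP}}(p,\rho/2,C')$, gives
\[
\Fill(\sigma)\leq C_0\cdot\rho R\cdot\M(\sigma)\leq 2C_0C\cdot R^n,
\]
which proves (1) after choosing $\rho$ small enough that $2C_0C\leq\eps$.

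For (2) and (3), let $\omega$ be a Plateau minimizer of $\sigma$ from Theorem~\ref{thm:plateau}; then $\M(\omega)\leq\eps R^n$ and $\spt(\omega)\subset N_{(\eps/\delta_0)^{1/n}R}(\spt(\sigma))$. Combined with $\spt(U)\subset N_{\rho R}(D)$ and $\spt(T')\subset N_C(D)$, a further shrinking of $\rho$ and $\eps$ places all three pieces inside $N_{\eps R}(D)$, giving (3). The signed combination $Y:=U+\omega-T'$ (the signs being forced by $\partial U=\partial T\pm\sigma$, $\partial\omega=\sigma$ and $\partial T=\partial T'$, so the relevant combination is genuinely a cycle) is a top-dimensional cycle with $\spt(Y)\subset N_{\eps R}(D)$ and $\M(Y)\leq 3CR^n$. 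Because the $\lambda_1$-Lipschitz quasi-retraction $\pi\colon X\to D$ of Corollary~\ref{cor_quasiretraction} factors through $\R^n$, the pushforward $\pi_\# Y$ vanishes, so Corollary~\ref{cor_homotopy_topdim} produces a filling of mass at most $C_1\lambda_1^n\cdot\eps R\cdot 3CR^n$; readjusting $\eps$ yields (2).

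The principal difficulty is tuning the hierarchy of constants: the desired $\eps$ in the conclusion forces small $\rho$ in Step~1, which feeds into a potentially much larger threshold $R_{\text{CNP}}(p,\rho,C')$ in the CNP application, and this threshold, together with the analogous constraints coming from the requirements $\rho R>C$ and from the isoperimetric support bound on $\omega$, determines $R_{\text{neck}}$. Once one is careful that each dependence only feeds through $\eps,C,d(p,D),L,A,n,X$ and the Morse data of $D$, the statement comes out with the advertised parameter dependence. The sign bookkeeping for $\omega$ and the slicing is routine and does not affect any estimate.
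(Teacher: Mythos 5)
Your argument for items (2) and (3) follows the paper's route (locate $\omega$ near $\spt(\sigma)$ via the support bound of Theorem~\ref{thm:plateau}, then kill the top-dimensional cycle $U+\omega-T'$ with the quasi-retraction, Corollary~\ref{cor_homotopy_topdim}), but your proof of item (1) has a genuine gap, and it is exactly at the crux of the lemma. You slice at a single scale: coarea on the band $\{\rho R/2\le d_D\le \rho R\}$ gives a slice with $\M(\sigma)\le \tfrac{2\M(T)}{\rho R}\le\tfrac{2C}{\rho}R^{n-1}$, and then CNP gives $\Fill(\sigma)\le C_0\cdot\rho R\cdot\M(\sigma)$. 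But when you substitute, the $\rho$ cancels: $\Fill(\sigma)\le C_0\cdot\rho R\cdot\tfrac{2C}{\rho}R^{n-1}=2C_0C\cdot R^n$. This bound is \emph{independent} of $\rho$, so your closing step ``choose $\rho$ small enough that $2C_0C\le\eps$'' is vacuous: $C$ is given and $C_0$ is part of the Morse data, while $\eps$ is arbitrary, so in general $2C_0C>\eps$ and no choice of $\rho$ repairs this. The loss is structural: making the slicing radius smaller (which improves the CNP factor $\rho R$) forces the coarea mass bound to degrade by exactly the reciprocal factor.

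The paper circumvents this with a pigeonhole over $K$ dyadic annuli $\{\tfrac{h}{2^k}R\le d_D<\tfrac{h}{2^{k-1}}R\}$, $k\le K$: one of them carries $\|T\|$-mass at most $\M(T)/K$, and slicing \emph{inside that annulus} yields $\M(\sigma)\le\tfrac{2^k}{KhR}\M(T)$ at a radius $r\le\tfrac{h}{2^{k-1}}R$, so the product entering CNP is $r\cdot\M(\sigma)\le\tfrac{2}{K}\M(T)$ and hence $\Fill(\sigma)\le\tfrac{2C_0}{K}\M(T)\le\tfrac{2C_0C}{K}R^n$, which is made $\le\eps R^n$ by taking $K$ large (the gain of $1/K$ is precisely what your single-scale argument cannot produce). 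To repair your proof you would need to insert this multi-scale selection before applying CNP; the rest of your constant bookkeeping and the derivation of (2) and (3) then goes through as in the paper, with $\delta=(\tfrac{2C_0}{K\delta_0})^{1/n}$ controlling the support of $\omega$.
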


\begin{proof}
	Take a small constant $h$ and a large natural number $K$ whose values will be determined later. Let $T_{x,y}=T\on\{xR\le d_D<yR\}$. 
	By the pigeonhole principle, there exists a natural number  $ k\le K$ such that 
	$\widehat T:=T_{\frac{h}{2^k},\frac{h}{2^{k-1}}}$ satisfies 
	$\M(\widehat T)\le\frac{\cdot \M(T)}{K}$. Hence there exists $r\in(\frac{hR}{2^k},\frac{hR}{2^{k-1}})$ such that 
	$\si:=\slc{T,d_D,r}\in\I_{n-1,c}(X)$ satisfies 
	\[\M(\si)\le \frac{\M(\widehat T)}{hR/2^k}\le \frac{2^k}{KhR}\cdot\M(T)\le \frac{2^{K} C}{Kh}\cdot R^{n-1}.\] 
	Set $U:= T\on\{d_D\le r\}$ and $V=T\on\{d_D> r\}$. 
	
	Then
	\bit
	\item $\spt(\si)\subset N_{2\rho R}(D)$ and $\spt( V)\subset B_p(CR)\setminus N_{\rho R}(D)$ with $\rho:=\frac{h}{2^k}$.
	\item $\M(V)\leq C\cdot R^n$.
	\eit

	Now we apply the coarse neck property to $V$. 
	For $C'=\max\{C,\frac{2^{K} C}{Kh}\}$ and $R\geq \ul{R}(p,\rho,C')$ holds 
	\begin{equation}
	\label{eq_mass_of_tau}
	\Fill(\si)\le C_0\cdot \rho R\cdot\M(\si)\le C_0\cdot\frac{hR}{2^{k}}\frac{2^k }{KhR}\cdot\M(T)=\frac{C_0}{K}\cdot\M(T).
	\end{equation}
	
	Let $\omega$ be a minimal filling of $\si$ and consider the cycle $S=U+\omega-T'$. 
	From the triangle inequality and the minimality of $\omega$, we see $\M(S)\le \M(T)+\M(T')$. We may assume  $C\leq hR$ and therefore $\spt(T')\subset N_{hR}(D)$. 
	By \eqref{eq_mass_of_tau}, we then conclude $\spt(S)\subset N_{(\delta+h)R}(D)$ where 
	$\delta=\left(\frac{2C_0}{K\delta_0} \right)^{\frac{1}{n}}$. By Corollary~\ref{cor_homotopy_topdim}
	\begin{align}
	\label{eq_small_filling}
	\Fill(S)\le  \tilde C(\delta+h)R(\M(T)+\M(T'))
	\end{align}
The lemma follows from \eqref{eq_mass_of_tau} and \eqref{eq_small_filling} by choosing $h$ small and $K$ and $R$ large.
\end{proof}

	\begin{corollary}\label{cor_good_part_covers}
	 Let $D\subset X$ be an $n$-dimensional $L$-Lipschitz $(L,A)$-quasidisk with CNP (cf. Definition~\ref{def_tcp}).
	Let $p\in X$ be a base point. 
	
	For given $\eps,C>0$, there exists $\ul{R}$ depending only on $\eps, C, d(p,D), L, A,$ $n, X$ and the CNP parameter of $D$ such that the following holds for any $\ul{R}\leq R\leq \frac{d(p,\D D)}{C}$. 
	
	Let $T\in \I_{n,loc}(X)$ represent $D$ as in Lemma~\ref{lem_quasiflat_are_quasiminimizers}. 
	Let $\tau\in \I_{n,c}(X)$ be a chain with $\spt(\tau)\subset B_p(CR)$ and $\M(\tau)\le C\cdot R^n$.
	Suppose $\D \tau=\D \tau'$ for a piece $\tau'\in\I_{n,c}(X)$ of $T$ with $\M(\tau')\le C\cdot R^n$. 
	Let $\tau=U+V$ be a coarse piece decomposition as in Lemma~\ref{lem_decomposition_lemma}. 
	Then
	\[\spt(\tau')\subset N_{\eps R}(\spt U).\]
	
	\end{corollary}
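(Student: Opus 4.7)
The plan is to argue by contradiction: suppose there is a point $x\in\spt(\tau')$ with $d(x,\spt U)>\eps R$, and produce a small competing filling near $x$ that violates the lower filling bound of Lemma~\ref{lem:fill-density} applied to the piece $\tau'$ of the quasi-minimizing current $T$. I invoke Lemma~\ref{lem_decomposition_lemma} with a parameter $\eps_0:=c\,\eps^{n+1}$ in place of its own $\eps$, for a small constant $c$ depending only on $n$, $c_0$, $\theta_1$, $\delta_0$, to be fixed at the end. This yields the decomposition $\tau=U+V$, a minimal filling $\omega$ of the neck $\si$ with $\M(\omega)\le\eps_0 R^n$, and, via condition~(2) of the Lemma together with Theorem~\ref{thm:plateau}, a filling $W\in\bI_{n+1,c}(X)$ of $U+\omega-\tau'$ with $\M(W)\le\eps_0 R^{n+1}$.

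The key preparatory step is that $\spt(\omega)$ sits close to $\spt(U)$ rather than merely close to $D$. Indeed, $\si$ is the sum of a boundary-slice $\slc{\tau,d_D,r_{\mathrm{slice}}}$ supported in $\overline{\spt U}$ and a piece of $\D\tau=\D\tau'\subset D\subset\overline{\spt U}$; therefore $\spt(\si)\subset\overline{\spt U}$. Theorem~\ref{thm:plateau} then allows one to choose $\omega$ with $\spt(\omega)\subset N_{(\eps_0/\delta_0)^{1/n}R}(\spt U)$, and for $c$ small this yields $\spt(U+\omega)\subset N_{\eps R/2}(\spt U)$. Setting $r:=\eps R/4$, the ball $B_x(r)$ is thus disjoint from $\spt(U+\omega)$, so that
\[
(\D W)\on B_x(r)=-\tau'\on B_x(r).
\]

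Next I construct the competing filling. Coarea applied to $W$ and $d_x$ on $[r,2r]$ produces $r'\in[r,2r]$ at which the slice $\si_{r'}:=\slc{W,d_x,r'}$ is an integral cycle of mass $\lesssim(\eps_0/\eps)R^{n}$. Theorem~\ref{thm:isop-ineq} fills it by $Y\in\bI_{n+1,c}(X)$ with $\M(Y)\le c_0(\eps_0/\eps)^{(n+1)/n}R^{n+1}$. Setting $V_\ast:=-W\on B_x(r')+Y$, the slicing formula combined with the displayed identity yields $\D V_\ast=\tau'\on B_x(r')$, so that $\tau'-\D V_\ast=\tau'\on B_x(r')^c$ has support contained in $X\setminus\mathring B_x(r')\subset X\setminus B_x(r)$; in particular $\spt(\tau'-\D V_\ast)\cap B_x(r)=\emptyset$.

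Finally, provided $R\ge\ul R$ is so large that $r>4a$, the fill-density bound of Lemma~\ref{lem:fill-density} applied to $\tau'$ as a piece of the quasi-minimizing current $T$ from Lemma~\ref{lem_quasiflat_are_quasiminimizers} gives $\M(V_\ast)\ge\theta_1 r^{n+1}=\theta_1(\eps R/4)^{n+1}$, while by construction
\[
\M(V_\ast)\le\|W\|(B_x(r'))+\M(Y)\le\eps_0 R^{n+1}+c_0(\eps_0/\eps)^{(n+1)/n}R^{n+1}.
\]
With $\eps_0=c\,\eps^{n+1}$, both terms on the right are bounded by constants times $\max\{c,c^{(n+1)/n}\}\,\eps^{n+1}R^{n+1}$, so choosing $c$ small enough (depending only on $n,c_0,\theta_1$) produces a contradiction. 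The main obstacles I expect are (i) confining $\spt(\omega)$ to an $O(\eps R)$-neighborhood of $\spt U$, which dictates the choice $\eps_0\sim\eps^{n+1}$; and (ii) applying Lemma~\ref{lem:fill-density} to the piece $\tau'$ rather than to a cycle, a routine extension using the quasi-minimality of $T$ supplied by Lemma~\ref{lem_quasiflat_are_quasiminimizers}.
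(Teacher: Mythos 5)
Your proposal is correct and follows essentially the same route as the paper: apply Lemma~\ref{lem_decomposition_lemma} with a parameter of order $\eps^{n+1}$, confine $\spt(\omega)$ near $\spt(U)$ via Theorem~\ref{thm:plateau}, and play the filling $W$ of $U+\omega-\tau'$ against the fill-density bound of Lemma~\ref{lem:fill-density} for the quasi-minimizing piece $\tau'$ (the paper simply uses $-W$ itself as the competitor, so your localization of $W$ to $B_x(r')$ and the cap $Y$ are unnecessary, though harmless). One justification is off: $D\subset\overline{\spt U}$ need not hold, since $\spt(U)$ is only the support of the restriction $\tau\on\{d_D\le r\}$; the correct reason for $\spt(\si)\subset\spt(U)$ is that $\si=-\D V$ is supported in $\{d_D\ge r\}$, hence away from $D\supset\spt(\D\tau)$, where it coincides with $\D U$.
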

	
	\begin{proof}
	 Set $\tilde \eps=\min\{\theta_1(\frac{\eps}{2})^{n+1},\delta_0(\frac{\eps}{2})^{n}\}$ and choose $R_{\text{neck}}$ for given $\tilde \eps, C$ as in  Lemma~\ref{lem_decomposition_lemma}.
	 Let $\omega\in \I_{n,c}(X)$ be a minimal filling of $\si:=-\D V$ and let $W\in\I_{n+1,c}(X)$ be a minimal filling of $U+\omega-\tau'$. Then $\M(\omega)\leq\tilde\eps\cdot R^n$
	and $\M(W)\leq\tilde\eps\cdot R^{n+1}$ by Lemma~\ref{lem_decomposition_lemma}. By the lower densitiy bound for minimizers, we have $\M(\omega)\geq \delta_0\cdot d(x,\spt(\si))^n$ for $x\in\spt(\omega)$, 
	and from Lemma~\ref{lem:fill-density} we obtain $\M(W)\geq \theta_1\cdot d(y,\spt(U+\omega))^{n+1}$ for $y\in\spt(W)$. By our choice of $\tilde\eps$ we arrive at $\spt(\tau')\subset N_{\frac{\eps}{2}R}(\spt(U+\omega))$
	and $\spt(\omega)\subset N_{\frac{\eps}{2}R}(\spt(\si))$. This concludes the proof since $\spt(\si)\subset\spt(U)$.
	\end{proof}

\section{Visibility for Morse quasiflats}
\label{sec_visibility_morse_quasiflats}
This section concerns the large scale structure of Morse quasiflats. Under the assumption of convex geodesic bicombings we prove that Morse quasiflats are asymptotically conical.
We then turn to boundaries at infinity and show that Morse quasiflats have a well defined Tits boundray which itself is a Morse cycle. At last we prove a rigidity result for Morse quasiflats of Euclidean 
mass growth.

\subsection{Asymptotic conicality}
Let $\Phi:\R^n\to X$ be a quasiflat in a metric space $X$ with image $Q\subset X$.
We say that a local current $T\in\bZ_{n,loc}(X)$ {\em represents} $Q$, if $T$ satisfies the following for some $\Lambda, C, a>0$.
\begin{itemize}
	\item $T$ is $(\Lambda,a)$-quasi-minimizing;
	\item $T$ is $(C,a)$-controlled, i.e. $\|T\|(B_p(r))\leq C\cdot r^n$ for all $p\in X$ and $r\geq a$;
	\item $d_H(Q,\spt(T))\le a$.
\end{itemize}
By \cite[Proposition 3.6]{higherrank}, every quasiflat in a proper metric space  with convex geodesic bicombing can be represented by a local current.
Moreover, if there exists a Lipschitz map $\Phi':\R^n\to X$ at finite distance from $\Phi$, then  
$T=\Phi'_\#\bb{\R^n}\in\bZ_{n,loc}(X)$ represents $Q$, cf. Lemma~\ref{lem_quasiflat_are_quasiminimizers}.

\begin{thm}[asymptotic conicality]
	\label{thm_cone_is_close_to_quasiflat}
	Let $X$ be a proper metric space  with convex geodesic bicombing.
Suppose $Q\subset X$ is a Morse  $(L,A)$-quasiflat,  represented by a current $T\in\bZ_{n,loc}(X)$. Let $p\in X$ be a base point. 
Then for any given $\epsilon>0$, there exists $R_{\text{ac}}$ depending only on $\epsilon, L, A, d(p,Q),X$ and the Morse data of $Q$ such that for every $R_{\text{ac}}\leq r$ holds
\[\ccone_p(\spt(T))\cap B_p(r)\subset N_{\eps r}(\spt T).\]
\end{thm}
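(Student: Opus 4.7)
The plan is to deduce the theorem from a more general visibility-type statement controlling flat distances between radial slices of $T$ and slices of a fixed reference cone, established by a downward induction on dyadic scales along the lines sketched in Section~\ref{subsec_proof}. The flat-distance estimate will then be converted into the required Hausdorff estimate using the density and filling bounds of Lemma~\ref{lem:density} and Lemma~\ref{lem:fill-density}.

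First I would set up the scale ladder. Given $x\in\ccone_p(\spt T)\cap B_p(r)$, the point $x$ lies on a geodesic segment from $p$ to some $q\in\spt T$; choose a large radius $r_0$ controlled by $d(p,q)$ and set dyadic scales $r_k:=r_0/2^k$. For each $k$, let $S_{r_k}:=\slc{T,d_p,r_k}$ be the radial slice of $T$, and let $\xi_{r_k}$ be the corresponding slice at radius $r_k$ of the reference cone $\cone_p(S_{r_0})$, so that $T_{r_k}:=\cone_p(\xi_{r_k})$ serves as the geometric model at scale $r_k$. The proof reduces to showing an inductive flat-distance bound of the form $\F(S_{r_k},\xi_{r_k})\le C\cdot r_k^{n-1}$ for a small constant $C$ depending on $\epsilon$, propagated down to the scale $r_k\approx r$.

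For the inductive step I would follow the scheme of \cite[Proposition 4.5]{higherrank}: given the bound at scale $r$, take a minimal filling $\alpha_r$ of $S_r-\xi_r$ and work with the chain $T_r+\alpha_r$, whose boundary lies close to $\spt T$. In the top-rank setting one can control this chain directly because the analogous cycle has small filling volume, but in the Morse setting it may exhibit large ``bubbles'' far from $\spt T$, so I would invoke the neck decomposition of Lemma~\ref{lem_decomposition_lemma} to split it into a piece $U$ close to $\spt T$ and a piece $V$ away from it. The coarse neck property of $T$, available via Proposition~\ref{cor_small_neck} since $Q$ is Morse, forces the neck $\partial V$ to have a filling of small volume. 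Slicing this decomposition at radius $r/2$ then yields $\xi_{r/2}=U_{r/2}+V_{r/2}$ together with a small filling $\omega_{r/2}$ of $\partial V_{r/2}$, and I would take $\xi'_{r/2}:=U_{r/2}+\omega_{r/2}$ as the reference cycle for the next stage.

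The main obstacle I expect is controlling the accumulation of the bad parts $\omega_{r/2}$ across many scales, since these residuals need not lie in the original cone and could push the reference cycle arbitrarily far from the geometric model if iterated naively. The key additional idea is to prove a uniform a priori bound on the number of scales at which nontrivial surgery is required: if the inductive estimate already holds at scale $r$ no surgery is needed at $r/2$, while otherwise the existence of a bubble of definite size forces the normalized mass $\M(\xi'_{r/2})/(r/2)^{n-1}$ to drop by a definite amount. Since this normalized mass is uniformly bounded above by the upper density bound for $T$ (Lemma~\ref{lem_quasiflat_are_quasiminimizers}) and bounded below by the lower filling bound of Lemma~\ref{lem:fill-density}, only finitely many surgeries can occur. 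After the last such surgery, the induction proceeds without further adjustment down to $r_k\approx r$, and the resulting flat distance bound is converted via the density estimates into the Hausdorff bound $\ccone_p(\spt T)\cap B_p(r)\subset N_{\epsilon r}(\spt T)$.
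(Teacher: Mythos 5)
Your induction scheme (dyadic scales, neck decomposition via Lemma~\ref{lem_decomposition_lemma}, small fillings of the necks from the coarse neck property, and the a priori bound on the number of surgeries coming from a definite drop in normalized mass) is essentially the paper's argument — but it is the paper's argument for the \emph{visibility} statement (Theorem~\ref{thm_visibility}, via Lemma~\ref{lem_dichotomy}, Lemma~\ref{lem_good_slices} and Corollary~\ref{cor_quasiflat_is_close_to_cone}), not for asymptotic conicality itself. The genuine gap is your last sentence. What the flat-distance control plus the available density/filling bounds yields is the containment $\spt(T)\cap B_p(r)\subset N_{\eps r}(C_p(\spt(S_R)))$: the lower filling bound for slices of $T$ (Lemma~\ref{lem_quasiflat_are_quasiminimizers}) and Corollary~\ref{cor_good_part_covers} let you push $\spt(T)$ into a neighborhood of the cone, because it is $T$ that enjoys quantitative non-degeneracy. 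The statement you must prove is the \emph{reverse} inclusion, $\ccone_p(\spt T)\cap B_p(r)\subset N_{\eps r}(\spt T)$, and for that direction the density estimates give you nothing: a point $x$ on a geodesic $[p,q]$ with $q\in\spt T$ need not lie in the support of any of the cycles you control, and small flat distance between $S_{r_k}$ and the cone slices does not force the cone to stay near $\spt T$ unless you have lower density/filling bounds for the cone pieces near $x$ — which you do not have (the cone over a slice can be locally very thin, and pushforwards under $h_{p,\lambda}$ can cancel).

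The paper bridges exactly this gap by a separate packing argument: it observes that the proof of Theorem~8.6 in the quasiminimizer paper of Kleiner--Lang deduces asymptotic conicality from visibility by a packing/counting argument (using the lower density of $T$ at $q$, the upper density bound, and convexity of the bicombing to show that geodesics from $p$ to a definite-measure portion of $\spt T$ near $q$ must pass $\eps r$-close to $\spt T$ at radius $r$), and that this argument does not use the rank assumption of that paper. So your proposal correctly reconstructs the hard analytic core (the visibility theorem), but it conflates visibility with conicality; to complete the proof you must either reproduce or cite the visibility-to-conicality packing step.
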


The proof of \cite[Theorem 8.6]{higherrank} shows that asymptotic conicality is a consequence of the following Theorem~\ref{thm_visibility}, as the proof of \cite[Theorem 8.6]{higherrank} is a packing argument which does not depend on the rank assumption in \cite{higherrank}.

\begin{thm}[visibility property]
	\label{thm_visibility}
	Let $X$ be a proper metric space  with convex geodesic bicombing.
Suppose $Q\subset X$ is a Morse  $(L,A)$-quasiflat,  represented by a current $T\in\bZ_{n,loc}(X)$. Let $p\in X$ be a base point and denote by $S_r$ the slices $\slc{T,d_p,r}$. 
	Then for given $\eps>0$, there exists $R_{\text{vis}}$ depending only on  $\epsilon, L, A, d(p,Q),X$ and the Morse data of $Q$ such that for every $R_{\text{vis}}\leq r\leq R$ holds
	
	\[\spt(T)\cap B_p(r)\subset N_{\eps r}(C_p(\spt(S_{R}))).\]
	
\end{thm}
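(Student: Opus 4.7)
I plan to follow the downward induction on scales sketched in Section~\ref{subsec_proof}, which extends \cite[Proposition 4.5]{higherrank} from the top-rank setting to the Morse setting. Fix $\eps>0$ and $R$ large, write $r_k:=R/2^k$, set $\sigma:=\spt(S_R)$, let $\sigma_r$ denote the slice of the geodesic cone $\cone_p(\sigma)$ at distance $r$ from $p$ (viewed as an integral $(n-1)$-cycle), and put $T_r:=\cone_p(\sigma_r)$ and $Q_r:=\spt(T)\cap B_p(r)$. By Lemma~\ref{lem_quasiflat_are_quasiminimizers} together with Lemma~\ref{lem:density} and Lemma~\ref{lem:fill-density}, the Hausdorff conclusion $Q_{r_k}\subset N_{\eps r_k}(T_{r_k})$ reduces to a flat-distance estimate of the form $\F(S_{r_k},\sigma_{r_k})\le C_1\cdot r_k^{n-1}$ with $C_1=C_1(\eps)$ sufficiently small, so the whole argument takes place at the level of flat distances between the slices.

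\textbf{Induction step via neck decomposition.} For the step from scale $r$ to $r/2$, let $\alpha_r\in\bI_n(X)$ be a minimal filling of $S_r-\sigma_r$ and form $\tau:=\alpha_r+(T_r-T_{r/2})$, whose boundary is $S_r-\sigma_{r/2}$. In the top-rank case of \cite{higherrank}, the cycle $Q_r+\alpha_r-T_r$ automatically has small filling and the induction closes; here it may contain macroscopic bubbles away from $Q$. I would remove them by applying the Neck Decomposition Lemma~\ref{lem_decomposition_lemma} to $\tau$ (with a compactly supported piece of $T$ at scales in $[r/2,r]$ playing the role of the comparison chain $T'$), producing a coarse piece decomposition $\tau=U+V$ by $d_Q$ whose neck $\si^*:=-\D V$ satisfies $\Fill(\si^*)\le\delta r^n$ and for which $\Fill(U+\omega-T')\le\delta r^{n+1}$, where $\delta$ can be made arbitrarily small once $r$ is large. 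Slicing at distance $r/2$ then yields a replacement $\sigma'_{r/2}=U_{r/2}+\omega_{r/2}$, and a direct estimate using the coning inequality Lemma~\ref{lem:coning_inequality} and the isoperimetric inequality Theorem~\ref{thm:isop-ineq} on the neck gives $\F(S_{r/2},\sigma'_{r/2})\le C_1(r/2)^{n-1}$, as required.

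\textbf{Controlling accumulation of bad parts.} The cycle $\sigma'_{r/2}$ splits into a ``good part'' $U_{r/2}$ lying inside $\cone_p(\sigma)$ and a ``bad part'' $\omega_{r/2}$ that may stray outside; a na\"ive iteration could accumulate $\log_2(R/r)$ bad parts and destroy the conclusion. I would resolve this by a two-tier descent: at each scale $r_k$, either the na\"ive bound $\F(S_{r_k},\sigma_{r_k})\le C_1 r_k^{n-1}$ already holds, in which case no surgery is performed and we descend using the unmodified cone slice; or else the failure of this bound forces the presence of a bubble of definite normalized mass, and its excision produces a new cycle whose normalized mass $\M(\sigma'_{r_k})/r_k^{n-1}$ drops by a definite amount $\eta(\eps)>0$. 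Since the upper density bound of Lemma~\ref{lem_quasiflat_are_quasiminimizers} furnishes an a priori ceiling on the normalized mass of every cycle produced along the way, the total number of surgery steps throughout the descent is bounded by a constant $N=N(\eps,L,A,X)$ independent of $R$ and $k$. Choosing the surgery tolerance $\delta$ from Lemma~\ref{lem_decomposition_lemma} so that each of the at most $N$ bad parts contributes at most $\eps r/(2N)$ to the Hausdorff deviation, and choosing $R_{\text{vis}}$ larger than the corresponding $R_{\text{neck}}$, closes the induction.

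\textbf{Main obstacle.} The delicate point, and what I expect to be the hard part, is the bookkeeping that interleaves the two-tier induction with the surgeries: one must track how successive bad parts $\omega_{r_k}$ enter the next neck decomposition (since $V$ and $\omega$ need not lie in $\cone_p(\sigma)$ or be supported near $Q$), and choose scale-dependent thresholds $\delta_k$ so that accumulated errors in both the flat distance and the support still sum to $O(\eps r_k)$. This relies on matching the coarse neck parameter $R_{\text{CNP}}$ from the Morse data, the density bounds of Lemmas~\ref{lem:density}--\ref{lem:fill-density}, and the homotopy estimate Corollary~\ref{cor_homotopy_topdim} consistently across all relevant rescalings.
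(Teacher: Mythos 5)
Your scheme is the paper's own: a downward dyadic induction on scales in which the neck decomposition (Lemma~\ref{lem_decomposition_lemma}) excises bubbles, the resulting cycles are split into a good part in the cone and a small bad part, the number of surgeries is bounded by a definite drop in normalized mass against the a priori density ceiling, and the flat-distance control is finally converted into the support inclusion. However, two steps as written do not go through. First, the chain you feed into Lemma~\ref{lem_decomposition_lemma} violates its hypotheses: with $\tau=\al_r+(T_r-T_{r/2})$ you have $\D\tau=S_r-\si_{r/2}$, while your comparison chain $T'=T\on(B_p(r)\setminus B_p(r/2))$ has $\D T'=S_r-S_{r/2}$, so $\D\tau\neq\D T'$; moreover $\spt(\D\tau)$ contains the cone slice $\si_{r/2}$, which is only known (inductively) to lie within $O(\eps r)$ of $Q$, not within the uniform constant $C$ required by the lemma. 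The paper avoids both problems by decomposing the full coned chain $\cone_p(\al_k)+\tau_k$, where $\tau_k$ is a minimal filling of $\al_k-S_{r_k}$ supported outside $B_p(r_k/2)$ (Lemma~\ref{lem_spt_outside_ball}), so that the boundary is exactly the generic slice $S_{r_k}$ and $T'=T\on B_p(r_k)$ matches it; the decomposition is then sliced at radii in $[r_k/4,r_k/2]$ (Lemma~\ref{lem_dichotomy}, Lemma~\ref{lem_good_slices}).

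Second, the reduction of the inclusion $\spt(T)\cap B_p(r)\subset N_{\eps r}(C_p(\spt(S_R)))$ to the flat-distance estimate is not a consequence of Lemmas~\ref{lem_quasiflat_are_quasiminimizers}, \ref{lem:density} and \ref{lem:fill-density} alone. Smallness of $\F(S_r,\cdot)$ for the single boundary slice cannot see a bump of $Q$ in the interior of $B_p(r)$: to apply Lemma~\ref{lem:fill-density} at a point of $\spt(T)$ far from the cone you need an $(n{+}1)$-chain of small mass homologing $T\on B_p(r)$ to a chain avoiding that point, and the natural comparison cycle $T\on B_p(r)-C_p(\ga_r+\om_r)-\tau_r$ may itself contain bubbles, so its filling is not automatically small. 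This is exactly why the paper invokes the coarse neck property a second time, via Corollary~\ref{cor_good_part_covers}, to conclude that the piece $T\on B_p(r)$ is covered by an $\eps r$-neighborhood of the good part (see Corollary~\ref{cor_quasiflat_is_close_to_cone}); you should add this step, after which your surgery-counting argument (which matches Lemma~\ref{lem_good_slices}) does close the induction. A minor point: the correct scaling for the flat distance between $(n-1)$-cycles at scale $r$ is $c\cdot r^{n}$, not $C_1\cdot r^{n-1}$.
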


In the set of this subsection we prove Theorem~\ref{thm_visibility}. Instead of estimating the Hausdorff distance directly, we first establish an estimate for certain filling distance (cf. Lemma~\ref{lem_good_slices}), and then deduce the desired distance estimate (cf. Corollary~\ref{cor_quasiflat_is_close_to_cone}). A key estimate needed in Lemma~\ref{lem_good_slices} is Lemma~\ref{lem_dichotomy}.

Throughout this subsection, we fix a proper metric space $X$ with convex geodesic bicombing and a base point $p\in X$. 
We also fix $Q\subset X$, an $n$-dimensional Morse $(L,A)$-quasiflat.
Recall that $\theta$ is the lower filling bound for spherical slices of $T$ (cf. Lemma~\ref{lem_quasiflat_are_quasiminimizers}) and $\Theta$ is the upper density of bound of $T$.

Let $T$ be a current representing $Q$ as in Lemma~\ref{lem_quasiflat_are_quasiminimizers}. We denote the slice $\slc{T,d_p,r}$ by $S_r$. It is called {\em generic}, if 
$S_r\in\I_{n-1,c}(X)$ and $\M(S_r)\leq\hat\Theta\cdot r^{n-1}$ with $\hat\Theta=4\cdot\Theta$. By the coarea inequality, for every $r_0>0$ there exits 
a generic slice in the range $[\frac{r_0}{4},\frac{r_0}{2}]$.

\begin{lem}\label{lem_lower_fill_bound}
Let $c\leq \frac{\theta}{2}$ be given. Suppose that $\al_r\in\I_{n-1,c}(B_p(r))$ is a cycle with $\F(\al_r,S_r)\leq c \cdot r^n$ for some $r\geq a$.
 Then $\M(\al_r)\geq c \cdot r^{n-1}$. 
\end{lem}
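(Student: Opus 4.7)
The plan is to compare a filling of $S_r$ built from $\al_r$ against the lower filling bound for $S_r$ coming from Lemma~\ref{lem_quasiflat_are_quasiminimizers}. Since $Q$ is a quasiflat, $T\in\bZ_{n,loc}(X)$ has no boundary, so $d(p,\spt(\D T))=\infty$ and the lower filling bound $\Fill(S_r)\ge \theta\cdot r^n$ is available for every $r\ge a$.

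Given $\eps>0$, choose a near-optimal filling $W\in\bI_{n,\cs}(X)$ of $S_r-\al_r$ with
\[
\M(W)\le \F(\al_r,S_r)+\eps\le c\cdot r^n+\eps.
\]
Since $\al_r\in\bZ_{n-1,\cs}(B_p(r))$, the coning inequality (Lemma~\ref{lem:coning_inequality}) yields a cone $C:=\cone_p(\al_r)\in\bI_n(X)$ with $\D C=\al_r$ and $\M(C)\le r\cdot\M(\al_r)$. The sum $W+C$ then satisfies $\D(W+C)=(S_r-\al_r)+\al_r=S_r$ and so provides a filling of $S_r$ whose mass bounds $\Fill(S_r)$ from above.

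Combining this with the lower filling bound gives
\[
\theta\cdot r^n\le \Fill(S_r)\le \M(W)+\M(C)\le c\cdot r^n+\eps+r\cdot\M(\al_r).
\]
Letting $\eps\to 0$ and rearranging yields $\M(\al_r)\ge (\theta-c)\cdot r^{n-1}$. The hypothesis $c\le \theta/2$ ensures $\theta-c\ge c$, and hence $\M(\al_r)\ge c\cdot r^{n-1}$, as required.

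There is no real obstacle; the only point worth highlighting is that the argument only uses two ingredients from the quasiflat assumption, namely that $T$ is a cycle (so the lower filling bound for spherical slices holds uniformly in $r\ge a$) and that $\al_r$ is supported in the ball $B_p(r)$ (so the coning inequality gives the linear-in-$r$ mass estimate for $C$). In particular, Morseness itself is not used here; the lemma is a general consequence of the density/filling properties of $T$.
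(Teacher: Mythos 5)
Your proof is correct and is essentially the paper's argument, just written out in full: the paper likewise combines the lower filling bound $\Fill(S_r)\ge\theta\cdot r^n$ from Lemma~\ref{lem_quasiflat_are_quasiminimizers} with the hypothesis $\F(\al_r,S_r)\le c\cdot r^n$ to get $\Fill(\al_r)\ge(\theta-c)\cdot r^n\ge c\cdot r^n$, and then uses the coning inequality $\Fill(\al_r)\le r\cdot\M(\al_r)$ to conclude. Your explicit remarks on why the filling bound is available for all $r\ge a$ (namely that $T$ is a cycle) and on where the support condition $\spt(\al_r)\subset B_p(r)$ enters are accurate and match the paper's implicit use of these facts.
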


\begin{proof}
 By the lower filling bound of Lemma~\ref{lem_quasiflat_are_quasiminimizers} and our assumption, we have $\Fill(\al_r)\geq c\cdot r^n$. The claim follows from the coning inequality. 
\end{proof}

Let $\de_0$ be as in Theorem~\ref{thm:plateau}. The following is a consequence of Theorem~\ref{thm:plateau}.

\begin{lem}\label{lem_spt_outside_ball}
Suppose that $\al_r\in\bZ_{n-1,c}(X)$ is a cycle with $\spt(\al_r)\subset S_p(r)$.
Then there exists a positive constant $c_1=c_1(\delta_0,r_0)$ such that  $\F(\al_r,S_r)\leq c_1\cdot r^n$ implies that any minimal filling of $\al_r-S_r$ is supported
in $X\setminus B_p(\frac{r}{2})$ for all $r\geq r_0$.
\end{lem}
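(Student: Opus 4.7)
The statement is essentially a support/density estimate for mass-minimizing currents, and follows from the standard monotonicity inequality for minimizers in spaces with convex geodesic bicombings (i.e.\ exactly the content of Theorem~\ref{thm:plateau}(2)). The plan is to apply this bound locally to any minimal filling whose boundary is confined to the sphere $S_p(r)$.

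First, fix a minimal filling $\beta\in\bI_{n,\cs}(X)$ of $\al_r-S_r$, which exists by Theorem~\ref{thm:plateau}; by hypothesis $\M(\beta)=\F(\al_r,S_r)\le c_1\cdot r^n$, and since $\spt(\al_r)\subset S_p(r)$ and $\spt(S_r)\subset S_p(r)$ (as $S_r$ is a slice of $T$ by $d_p$ at level $r$), one has $\spt(\D\beta)\subset S_p(r)$. Second, invoke the monotonicity/density lower bound for minimizers: for every $x\in\spt(\beta)$ and every $s$ with $0<s\le d(x,\spt(\D\beta))$,
\[
\|\beta\|(\B{x}{s})\ \ge\ \delta_0\cdot s^{n},
\]
where $\delta_0$ is the constant from Theorem~\ref{thm:plateau}. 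This is a standard comparison argument: for such $s$ the slice $\slc{\beta,d_x,s}$ is a cycle, its geodesic cone from $x$ furnished by Lemma~\ref{lem:coning_inequality} fills it with mass at most $s\cdot\M(\slc{\beta,d_x,s})$, and minimality of $\beta$ combined with the coarea inequality gives the monotonicity formula in the usual way.

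Third, suppose for contradiction that some point $x\in\spt(\beta)$ lies in $B_p(r/2)$. Since $\spt(\D\beta)\subset S_p(r)$, the triangle inequality gives $d(x,\spt(\D\beta))\ge r/2$, so the monotonicity bound with $s=r/2$ yields
\[
c_1\cdot r^{n}\ \ge\ \M(\beta)\ \ge\ \|\beta\|(\B{x}{r/2})\ \ge\ \delta_0\cdot (r/2)^{n}.
\]
Choosing $c_1=c_1(\delta_0,n)<\delta_0/2^{n}$ produces a contradiction, whence $\spt(\beta)\cap B_p(r/2)=\es$, as desired. The main (and essentially only) obstacle is making the interior monotonicity rigorous at all scales $s\le r/2$ in the bicombing setting, but this is precisely what underlies Theorem~\ref{thm:plateau}(2); the parameter $r_0$ enters only to ensure the scale $r$ is large enough that any error or additive constants inherited from upstream estimates (e.g.\ the quasi-minimization constant in Lemma~\ref{lem:density}) are absorbed into the strict inequality $c_1<\delta_0/2^{n}$.
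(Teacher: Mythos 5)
Your argument is correct and is essentially the paper's: the lemma is stated there as a direct consequence of the support estimate in Theorem~\ref{thm:plateau} (a minimal filling lies within distance $(\M/\delta_0)^{1/n}$ of its boundary's support), which you simply unpack via the underlying lower density bound before choosing $c_1<\delta_0/2^n$. The only cosmetic difference is that you re-derive the monotonicity step rather than quoting the theorem's second item verbatim.
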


To be in the range of both preceding lemmas we set $$c_2:=\min\{\frac{\theta}{2},c_1(\delta_0,1)\}.$$

\begin{lem}\label{lem_dichotomy}
Let $0<c\leq c_2$ be given.
 There exists a small positive constant $\vartheta=\vartheta(c)$, such that for every $\delta>0$ there exists a large radius $R$, 
 depending on $\delta, d(p,Q), \Theta, c,X, L, A, n$ 
 and the CNP parameter of $Q$ such that the following holds for all $r_1\geq 4\cdot R$.
 
 Let $\al_1\in\bZ_{n-1,c}(X)$ be a cycle supported in $S_p(r_1)$ and such that 
 \bit
 \item $\M(\al_1)\leq\hat\Theta\cdot r_1^{n-1}$;
 \item $\F(\al_1,S_{r_1})\leq c\cdot r_1^n$.
 \eit
 Set $\al_r=h_{p,\frac{r}{r_1}\#}\al_1$. Suppose $\F(\al_r,S_{r})>c\cdot r^n$ for almost all $r\in[\frac{r_1}{4},\frac{r_1}{2}]$.
 Then there exists a generic slice $S_{r_2}$ with $r_2\in[\frac{r_1}{4},\frac{r_1}{2}]$ and a cycle $\al_2\in\bZ_{n-1,c}(X)$ supported in $S_p(r_2)$ such that 
 \bit
 \item $\frac{\M(\al_2)}{r_2^{n-1}}\leq \frac{\M(\al_1)}{r_1^{n-1}}-\vartheta$\quad (mass drop);
 \item $\F(\al_2,S_{r_2})\leq \delta\cdot r_2^n$\quad (closeness).
 \eit
 Moreover, $\al_2=\gamma_2+\beta_2$  where the {\em good part} $\gamma_2$ is supported in $C_p (\spt(\al_1))$ and
 the {\em bad part} $\beta_2$ is small, $\M(\beta_2)\leq \delta\cdot r_2^{n-1}$.
\end{lem}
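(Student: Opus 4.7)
The plan is to apply the Neck Decomposition Lemma~\ref{lem_decomposition_lemma} in a specific geometric configuration and then slice the resulting decomposition at a carefully chosen radius $r_2\in[r_1/4,r_1/2]$. The core mechanism is contrapositive: if $\F(\al_r,S_r)>c\,r^n$ for almost every $r$ in this range, then the ``bad piece'' of the neck decomposition must carry a definite fraction of the mass of $\al_r$, and surgically replacing it by the small filling $\omega$ produced by the lemma yields simultaneously the mass drop and proximity to $S_{r_2}$.

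\textbf{Setup and slicing.} Fix $\eps_0>0$ (to be chosen at the end, depending only on $c$ and $\delta$). Let $\omega_1\in\I_{n,c}(X)$ be a minimal filling of $S_{r_1}-\al_1$. Since $\F(\al_1,S_{r_1})\le c\,r_1^n\le c_1 r_1^n$, Lemma~\ref{lem_spt_outside_ball} gives $\spt(\omega_1)\cap B_p(r_1/2)=\emptyset$, while Theorem~\ref{thm:plateau} bounds its outer extent. Set
\[\tau:=\cone_p(\al_1)+\omega_1,\qquad \tau':=T\on B_p(r_1);\]
both have boundary $S_{r_1}$ and mass at most a constant (depending on $c$ and $\hat\Theta$) times $r_1^n$ (using Lemma~\ref{lem:coning_inequality} together with $\M(\al_1)\le\hat\Theta r_1^{n-1}$), with supports in a controlled neighborhood of $B_p(r_1)$. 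Apply Lemma~\ref{lem_decomposition_lemma} to $\tau$ with reference chain $\tau'$ and scale $R=r_1\ge R_{\text{neck}}(\eps_0)$ to produce a decomposition $\tau=U+V$ (with $U=\tau\on\{d_Q\le\rho\}$), a minimal filling $\omega$ of the neck with $\M(\omega)\le\eps_0 r_1^n$ such that $U+\omega-\tau'$ is a cycle, and a filling $W\in\I_{n+1,c}(X)$ of $U+\omega-\tau'$ with $\M(W)\le\eps_0 r_1^{n+1}$. Since $\omega_1$ misses $S_p(r)$ for $r<r_1/2$, $\slc{\tau,d_p,r}=\al_r$. Writing $\gamma_r:=\slc{U,d_p,r}$, $\eta_r:=\slc{V,d_p,r}$, $\mu_r:=\slc{\omega,d_p,r}$, the pair $(\gamma_r,\eta_r)$ is a piece decomposition of $\al_r$, and a direct boundary computation shows that both $\eta_r-\mu_r$ and $\gamma_r+\mu_r-S_r$ are cycles. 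Applying coarea to $T$, $\omega$, $W$ together with the hypothesis and Markov's inequality, we obtain a subset $A\subset[r_1/4,r_1/2]$ of positive measure on which simultaneously: (i) $\F(\al_r,S_r)>c\,r^n$; (ii) $S_r$ is generic; (iii) $\M(\mu_r)\le K\eps_0 r^{n-1}$; and (iv) $\M(\slc{W,d_p,r})\le K\eps_0 r^n$, where $K=K(n,\Theta,\hat\Theta)$. Pick $r_2\in A$ and set $\gamma_2:=\gamma_{r_2}$, $\beta_2:=\mu_{r_2}$, $\al_2:=\gamma_2+\beta_2$.

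\textbf{Verification.} Condition (iv) gives the closeness $\F(\al_2,S_{r_2})\le\M(\slc{W,d_p,r_2})\le K\eps_0 r_2^n\le\delta r_2^n$ once $\eps_0\le\delta/K$. For the mass drop, the coning inequality (Lemma~\ref{lem:coning_inequality}) applied to the cycle $\eta_{r_2}-\mu_{r_2}$ supported on $S_p(r_2)$ gives $\Fill(\eta_{r_2}-\mu_{r_2})\le r_2(\M(\eta_{r_2})+\M(\mu_{r_2}))$. Combining with (i), (iii) and the triangle inequality,
\[c\,r_2^n<\F(\al_{r_2},S_{r_2})\le\Fill(\eta_{r_2}-\mu_{r_2})+\F(\al_2,S_{r_2})\le r_2\M(\eta_{r_2})+K_1\eps_0 r_2^n,\]
which forces $\M(\eta_{r_2})\ge(c-K_1\eps_0)r_2^{n-1}\ge(c/2)r_2^{n-1}$ for $\eps_0\le c/(2K_1)$. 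Using $\M(\al_{r_2})=\M(\gamma_{r_2})+\M(\eta_{r_2})$ (piece decomposition) and $\M(\al_{r_2})\le(r_2/r_1)^{n-1}\M(\al_1)$ (Lipschitz pushforward),
\[\frac{\M(\al_2)}{r_2^{n-1}}\le\frac{\M(\al_{r_2})-\M(\eta_{r_2})+\M(\mu_{r_2})}{r_2^{n-1}}\le\frac{\M(\al_1)}{r_1^{n-1}}-\frac{c}{4},\]
so we may take $\vartheta=c/4$. Finally $\omega_1$ misses $B_p(r_2)$, so $U\on B_p(r_2)$ is a piece of $\cone_p(\al_1)$; hence $\spt(\gamma_2)\subset C_p(\spt(\al_1))$. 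Also $\M(\beta_2)=\M(\mu_{r_2})\le K\eps_0 r_2^{n-1}\le\delta r_2^{n-1}$.

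\textbf{Main obstacle.} The technical heart is to arrange that the four Markov-type conditions (i)--(iv) share a common subset of positive measure in $[r_1/4,r_1/2]$ while keeping $\vartheta=c/4$ independent of $\delta$. This is possible because the mass-drop estimate depends only on the ratio $c/\eps_0$, so the $\delta$-dependence is absorbed entirely into the single choice $\eps_0\le\min(c,\delta)/K_0$ for an explicit constant $K_0$.
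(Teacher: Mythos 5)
Your proposal is correct and takes essentially the same route as the paper's proof: you apply the neck decomposition (Lemma~\ref{lem_decomposition_lemma}) to $\cone_p(\al_1)$ plus a minimal filling of $S_{r_1}-\al_1$ (pushed outside $B_p(r_1/2)$ via Lemma~\ref{lem_spt_outside_ball}) with reference chain $T\on B_p(r_1)$, slice $U$, $V$, $\omega$, $W$ by $d_p$ at a radius in $[r_1/4,r_1/2]$ chosen by the coarea inequality, set $\al_2=U_{r_2}+\omega_{r_2}$, and extract the mass drop from the assumption $\F(\al_r,S_r)>c\,r^n$ by coning off $V_{r_2}-\omega_{r_2}$. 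The only deviations are bookkeeping: you obtain $\vartheta=c/4$ rather than $c/2$, and you make the simultaneous Chebyshev-type selection of $r_2$ (including genericity of $S_{r_2}$) explicit where the paper leaves it implicit.
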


\begin{proof}
 Set $\vartheta=\frac{c}{2}$.
 We will choose $R$ large enough such that Lemma~\ref{lem_decomposition_lemma} applies for appropriate choices of $\eps$ and $C$. 
 We set $C=\hat\Theta+c$ and choose $\eps<\min\{\frac{c}{2^{n+5}}, \frac{\delta}{2^{n+3}}\}$.
 Now we choose $R=\ul R(\eps,C)$ where $\ul R$ is provided by   Lemma~\ref{lem_decomposition_lemma}.

 Let $\tau\in\I_{n,c}(X\setminus B_p(\frac{r_1}{2}))$ be a filling of $\al_1-S_{r_1}$ with $\M(\tau)\leq c\cdot r_1^n$ (cf. Lemma \ref{lem_spt_outside_ball}). 
 We apply Lemma~\ref{lem_decomposition_lemma} to the chain $\cone_p(\al_1)+\tau$ to produce a coarse piece decomposition
$\cone_p(\al_1)+\tau=U+V$ where $U$ denotes the piece supported close to $Q$. Recall that Lemma~\ref{lem_decomposition_lemma} provides a filling $\omega\in \I_{n,c}(X)$ of $\D V$ with mass $<\eps \cdot r_1^n$
and a filling $W$ of $U+\omega-(T\on B_p(r_1))$ with mass $<\eps\cdot r_1^{n+1}$.

We will now slice $U,V,W$ and $\omega$ in the range $[\frac{r_1}{4},\frac{r_1}{2}]$  with respect to $d(\cdot,p)$ to obtain controlled slices  $U_r$, $V_r$, $W_r$ and $\om_r$. 
Note that since $\spt(\tau)\subset S_p(r_1)$, we have the piece decomposition $\al_r=U_r+V_r$; we know that $U_r+\omega_r$ and $V_r-\omega_r$ are cycles; and
we see that $W_r$ is a filling of $U_r+\omega_r-S_r$. 

We define $\al_2:=U_r+\omega_r$, where $U_r$ corresponds to the good part $\ga_2$ and $\omega_r$ corresponds to the bad part $\beta_2$ in the statement of the lemma.

By the coarea inequality we can choose $r\in[\frac{r_1}{4},\frac{r_1}{2}]$ such that 
\[\M(\omega_{r})\leq 2^{n+2}\cdot\eps\cdot r^{n-1} \text{ and } \M(W_{r})\leq 2^{n+3}\cdot\eps\cdot r^{n}.\]
By our choice of $\eps$, this implies $\M(W_{r})\leq \delta\cdot r^n$
and we conclude $\F(\al_2,S_{r})\leq\M(W_{r})\leq\delta \cdot r^n$ as required.

Note that $U_{r}$ is supported in $C_p (\spt(\al_1))$. Again, by our choice of $\eps$, we see $\M(\omega_{r})\leq\delta\cdot r^{n-1}$. 
Hence $\al_2$ has the desired good/bad decomposition.

Using the triangle inequality and the coning inequality, we estimate
\begin{align*}
 \F(\al_r,S_r)&\leq \F(\al_r,\al_2)+\F(\al_2,S_r)\\
 &= \Fill(V_r-\omega_r)+\F(U_r+\omega_r,S_r)\\
 &\leq r\cdot \M(V_r-\omega_r)+\M(W_r).
\end{align*}
By assumption $\F(\al_r,S_r)>c\cdot r^n$ and we conclude 
\[\M(V_r-\omega_r)\geq (c-2^{n+3}\eps)\cdot r^{n-1}.\]
Hence 
\begin{align*}
\M(\al_2)&\leq \M(U_r)+\M(\omega_r)=\M(\al_r)-\M(V_r)+\M(\omega_r)\\
&\leq \frac{\M(\al_1)}{r_1^{n-1}}\cdot r^{n-1}-\M(V_r-\omega_r)+2\M(\omega_r)<(\frac{\M(\al_1)}{r_1^{n-1}}-\vartheta) \cdot r^{n-1}.
\end{align*}
The second step uses the piece decomposition $\al_r=U_r+V_r$. We complete the proof by choosing $r_2=r$.
\end{proof}

\begin{lem}\label{lem_good_slices}
 Let $c$ and $\delta$ be given such that $0<\delta\leq c\leq c_2$. Then there exists a large radius $R$, depending on $\delta, d(p,Q), \Theta, c, X, L, A, n$ 
 and the CNP parameter of $Q$ such that the following holds. 
 
 For any generic cycle $S_{r_0}$ with $r_0>R$ we find a
  cycle $\al_r\in \bZ_{n-1,c}(X)$ with $\spt(\al_r)\subset S_p(r)$ and $r\in[R,4R]$ such that
 
 \begin{enumerate}
		\item $\F(\al_r, S_r)\leq c\cdot r^n$; in particular, any minimal filling of $\al_r-S_r$ is carried by $X\setminus B_p(\frac{r}{2})$;
		\item $\al_r$ can be written as a sum of a {\em good part} and a {\em bad part}, $\al_r=\ga_r+\beta_r$;
		\item the good part $\ga_r$ is nontrivial and satisfies $0<\M(\ga_r)\le \hat\Theta\cdot r^{n-1}$ and $\spt(\ga_r)\subset\cone_p(\spt(S_{r_0}))$;
		\item the bad part $\beta_r$ is small, $\M(\beta_r)\le \delta\cdot r^{n-1}$.
	\end{enumerate}
\end{lem}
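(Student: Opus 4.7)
The plan is a downward induction on scales based on iterating Lemma~\ref{lem_dichotomy}. Setting $\alpha^{(0)} := S_{r_0}$, $\gamma^{(0)} := S_{r_0}$, $\beta^{(0)} := 0$, and $r^{(0)} := r_0$, I construct a sequence of cycles $\alpha^{(k)} \in \bZ_{n-1,c}(X)$ supported on $S_p(r^{(k)})$ with $r^{(k+1)} \in [r^{(k)}/4, r^{(k)}/2]$, each carrying a decomposition $\alpha^{(k)} = \gamma^{(k)} + \beta^{(k)}$, while maintaining the invariants
\[
\M(\alpha^{(k)}) \le \hat\Theta\,(r^{(k)})^{n-1}, \quad \F(\alpha^{(k)}, S_{r^{(k)}}) \le c\,(r^{(k)})^n,
\]
\[
\spt(\gamma^{(k)}) \subset \cone_p(\spt(S_{r_0})), \quad \M(\beta^{(k)}) \le \delta_k\,(r^{(k)})^{n-1},
\]
for a controlled sequence $0 = \delta_0 \le \delta_1 \le \cdots$. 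The iteration halts at the first index $k^*$ with $r^{(k^*)} \in [R, 4R]$, which occurs in finitely many steps since the scale is divided by a factor in $[2,4]$ at every step, and $r^{(k^*)} \ge R$ is automatic once we enter this window.

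At step $k$, I consider the pushforwards $h_{p,\lambda\#}\alpha^{(k)}$ with $\lambda = r/r^{(k)} \in [1/4, 1/2]$. In \textbf{case (a)}---when some generic $r$ gives $\F(h_{p,\lambda\#}\alpha^{(k)}, S_r) \le c\,r^n$---I set $r^{(k+1)} := r$, $\alpha^{(k+1)} := h_{p,\lambda\#}\alpha^{(k)}$, $\gamma^{(k+1)} := h_{p,\lambda\#}\gamma^{(k)}$, and $\beta^{(k+1)} := h_{p,\lambda\#}\beta^{(k)}$. Since $h_{p,\lambda}$ is $\lambda$-Lipschitz, all mass bounds persist and $\delta_{k+1} = \delta_k$. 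In the opposite \textbf{case (b)}, Lemma~\ref{lem_dichotomy} applies with parameter $\delta' := \delta/(2K)$, where $K := \lceil (\hat\Theta - c)/\vartheta \rceil$ bounds the number of case (b) occurrences. It produces $\alpha^{(k+1)}$ at scale $r^{(k+1)} \in [r^{(k)}/4, r^{(k)}/2]$, a decomposition $\alpha^{(k+1)} = U_{r^{(k+1)}} + \omega_{r^{(k+1)}}$ with $\spt(U_{r^{(k+1)}}) \subset \cone_p(\spt(\alpha^{(k)}))$ and $\M(\omega_{r^{(k+1)}}) \le \delta'\,(r^{(k+1)})^{n-1}$, the mass drop, and the closeness $\F(\alpha^{(k+1)}, S_{r^{(k+1)}}) \le \delta'(r^{(k+1)})^n \le c\,(r^{(k+1)})^n$. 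I then set $\gamma^{(k+1)} := \alpha^{(k+1)} \on \cone_p(\spt(S_{r_0}))$ and $\beta^{(k+1)} := \alpha^{(k+1)} - \gamma^{(k+1)}$.

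The \textbf{main obstacle} is bounding $\M(\beta^{(k+1)})$ in case (b). By the inductive containment $\spt(\gamma^{(k)}) \subset \cone_p(\spt(S_{r_0}))$ and the identity $\cone_p(\cone_p(A)) = \cone_p(A)$ (valid in the bicombing setting since geodesics from $p$ to points of $\cone_p(A)$ remain in $\cone_p(A)$), one has $\cone_p(\spt(\alpha^{(k)})) \setminus \cone_p(\spt(S_{r_0})) \subset \cone_p(\spt(\beta^{(k)}))$. Because the filling $\tau$ in the proof of Lemma~\ref{lem_dichotomy} can be chosen via Lemma~\ref{lem_spt_outside_ball} to avoid $B_p(r^{(k)}/2)$, and since $r^{(k+1)} < r^{(k)}/2$, the slice $\tau_{r^{(k+1)}}$ vanishes, so $U_{r^{(k+1)}}$ is a piece of $\cone_p(\alpha^{(k)})_{r^{(k+1)}} = \cone_p(\gamma^{(k)})_{r^{(k+1)}} + \cone_p(\beta^{(k)})_{r^{(k+1)}}$. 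Restricting to $X \setminus \cone_p(\spt(S_{r_0}))$ eliminates the $\gamma^{(k)}$-cone slice, and the exact cone slicing identity yields
\[
\M(U_{r^{(k+1)}} \on (X \setminus \cone_p(\spt(S_{r_0})))) \le \M(\cone_p(\beta^{(k)})_{r^{(k+1)}}) = (r^{(k+1)}/r^{(k)})^{n-1}\,\M(\beta^{(k)}) \le \delta_k\,(r^{(k+1)})^{n-1}.
\]
Combined with the contribution $\M(\omega_{r^{(k+1)}})$, this gives the key recursion $\delta_{k+1} \le \delta_k + \delta'$. Since case (b) occurs at most $K$ times (otherwise the mass drop would push $\M(\alpha^{(k)})$ below $c\,(r^{(k)})^{n-1}$, contradicting Lemma~\ref{lem_lower_fill_bound}, which applies because $c \le c_2 \le \theta/2$), summation gives $\delta_{k^*} \le K\delta' = \delta/2 < \delta$.

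At termination, let $\alpha_r := \alpha^{(k^*)}$, $\gamma_r := \gamma^{(k^*)}$, $\beta_r := \beta^{(k^*)}$ with $r := r^{(k^*)} \in [R, 4R]$. Property (1) is the invariant $\F(\alpha_r, S_r) \le c\,r^n$, and the minimal filling lying outside $B_p(r/2)$ is Lemma~\ref{lem_spt_outside_ball} using $c \le c_2 \le c_1$. Property (2) holds by construction. For (3), $\M(\gamma_r) \le \M(\alpha_r) \le \hat\Theta\,r^{n-1}$ from the invariant, while $\M(\gamma_r) \ge \M(\alpha_r) - \M(\beta_r) \ge (c - \delta/2)\,r^{n-1} > 0$ via Lemma~\ref{lem_lower_fill_bound} and $\delta \le c$; the support containment $\spt(\gamma_r) \subset \cone_p(\spt(S_{r_0}))$ is by construction. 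Property (4) is $\M(\beta_r) \le \delta_{k^*}\,r^{n-1} \le (\delta/2)\,r^{n-1} < \delta\,r^{n-1}$. The radius $R$ is chosen large enough so that Lemma~\ref{lem_dichotomy} with parameter $\delta/(2K)$ and Lemma~\ref{lem_lower_fill_bound} apply at all scales $\ge R$, producing the advertised dependence on $\delta, d(p,Q), \Theta, c, X, L, A, n$ and the CNP parameter.
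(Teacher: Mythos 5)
Your proposal is correct and follows essentially the same route as the paper: the same two-case downward induction on scales, the same use of Lemma~\ref{lem_dichotomy} in the non-close case, the same redefinition of the good part as the restriction to $\cone_p(\spt(S_{r_0}))$, and the same bound on the number of bad steps via the normalized mass drop against the lower bound from Lemma~\ref{lem_lower_fill_bound}. The only difference is that you spell out (correctly, via the cone-slicing estimate) why each Case-(b) step adds at most $\delta'$ to the normalized mass of the bad part, a point the paper asserts more tersely; note only the inequality $\M(\cone_p(\beta^{(k)})_{r^{(k+1)}})\le(r^{(k+1)}/r^{(k)})^{n-1}\M(\beta^{(k)})$ is needed (and available), not the stated equality.
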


\begin{proof}

Set $\delta'=\frac{\delta\vartheta}{\hat\Theta-c}$ where $\vartheta=\frac{c}{2}$ as before and choose $R=R(\delta')$ as in Lemma~\ref{lem_dichotomy}.

 We inductively define a sequence of cycles $\al_k$ supported in $S_p(r_k)$ with $r_k\in[\frac{r_0}{4^k},\frac{r_0}{2^k}]$
 such that each cycle $\al_k$ has the required properties on its own scale.
 
 We set $\al_0=S_{r_0}$. To define $\al_{k+1}$ we distinguish two cases. 

 \noindent
{\em Case 1.} There exists $r\in [\frac{r_0}{4^k},\frac{r_0}{2^k}]$  such that
 $\F(h_{p,\frac{r}{r_k}\#}(\al_k),S_r)\leq c\cdot r^n$. 

In this case we set $\al_{k+1}=h_{p,\frac{r}{r_k}\#}(\al_k)$. (See 
Section~\ref{subsec:bicombing} for the definition of $h_p$.)  The good/bad decomposition of $\al_{k}$ induces 
a good/bad decomposition of $\al_{k+1}$, $\ga_{k+1}=h_{p,\frac{r_{k+1}}{r_k}\#}(\ga_k)$ and $\beta_{k+1}=h_{p,\frac{r_{k+1}}{r_k}\#}(\beta_k)$.

\medskip
\noindent
{\em Case 2. Negation of Case 1.}
 Now we apply Lemma~\ref{lem_dichotomy} to obtain $\al_{k+1}$ from $\al_{k}$.
 Lemma~\ref{lem_dichotomy} also provides a good/bad decomposition  $\al_{k+1}=\ga'_{k+1}+\beta'_{k+1}$ where $\spt(\ga'_{k+1})\subset \cone_p(\spt(\al_{k}))$ and
 $\M(\beta'_{k+1})\leq \delta'\cdot r_{k+1}^{n-1}$. We define $\ga_{k+1}=\al_{k+1}\on\cone_p(\spt(\al_0))$ and $\beta_{k+1}=\al_{k+1}-\ga_{k+1}$.
 Hence $\M(\beta_{k+1})\leq N\cdot\delta'\cdot r_{k+1}^{n-1}$ where $N$ is the number of times Case 2 previously occured.
 We claim that $N$ is uniformly bounded. Indeed, by Lemma~\ref{lem_lower_fill_bound}, we know that 
 $\M(\al_{k+1})\geq c \cdot r_{k+1}^{n-1}$. On the other hand, by Lemma~\ref{lem_dichotomy},
 $\M(\al_{k+1})\leq (\hat\Theta-N\vartheta)\cdot r_{k+1}^{n-1}$ which provides the upper bound  $N\leq \frac{\hat \Theta-c}{\vartheta}$.
 By our choice of $\delta'$ we get $N\cdot\delta'\leq\delta$ and therefore the required mass bound for the bad part $\beta_{k+1}$.
 
 Finally we set $\al_r=\al_{l}$ where $l$ is maximal such that $r_l\geq R$. This concludes the proof.
\end{proof}

\begin{cor}
	\label{cor_quasiflat_is_close_to_cone}
	For given  $\eps>0$, there exists $R_0$ depending only on $\eps, L, A, d(p,Q),X$ and the Morse data of $Q$ such that the following holds for any $r\ge R_0$. 
	If $r_0\geq 4r$ and $S_{r_0}$ is a generic slice, then
	\[\spt(T)\cap B_p(r)\subset N_{\eps r}(C_p(\spt(S_{r_0}))).\]
	
\end{cor}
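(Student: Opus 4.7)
The plan is to extract auxiliary cycles at every relevant scale from the inductive construction behind Lemma~\ref{lem_good_slices}, and then use Corollary~\ref{cor_good_part_covers} together with the density bounds for the quasi-minimizing current $T$ to translate flat closeness into Hausdorff closeness.

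First I would fix a small precision $\eps' \ll \eps$ and auxiliary parameters $c = c(\eps'), \delta = \delta(\eps') > 0$, and apply Lemma~\ref{lem_good_slices} with these parameters to produce a threshold $R$; then set $R_0 := 32 R$. Although Lemma~\ref{lem_good_slices} literally returns a single cycle, its inductive proof actually produces cycles $\al_k \in \bZ_{n-1,c}(S_p(r_k))$ at every dyadic scale $r_k \in [r_0/4^k, r_0/2^k]$ in the relevant range, each satisfying conclusions (1)--(4) of the lemma: a minimal filling $W_k$ of $S_{r_k} - \al_k$ carried outside $B_p(r_k/2)$ (by Lemma~\ref{lem_spt_outside_ball}), and a good/bad decomposition $\al_k = \ga_k + \beta_k$ with $\spt(\ga_k) \subset \cone_p(\spt(S_{r_0}))$ and $\M(\beta_k) \le \delta r_k^{n-1}$. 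For the given $r \ge R_0$, I would pick $k$ with $r_k \in [8r, 32r]$---this is always possible since successive $r_k$ differ by at most a factor of $8$.

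Next I form the filling $\tau := \cone_p(\ga_k) + \cone_p(\beta_k) + W_k$ of $S_{r_k}$, with $\M(\tau) \le (\hat\Theta + \delta + c) r_k^n$ and $\spt(\tau) \subset B_p(Cr_k)$ for a suitable $C$. Setting $\tau' := T \on B_p(r_k)$ (a piece of $T$ with the same boundary $S_{r_k}$ and mass $\le \Theta r_k^n$), Corollary~\ref{cor_good_part_covers} applied with precision $\eps'$ yields a coarse piece decomposition $\tau = U + V$ satisfying $\spt(T) \cap B_p(r_k) \subset N_{\eps' r_k}(\spt U)$. Since $\spt(U) \subset \cone_p(\spt(S_{r_0})) \cup \spt(\cone_p(\beta_k)) \cup \spt(W_k)$ and $d(x, \spt(W_k)) \ge r_k/2 - r \ge 3r_k/8 \gg \eps' r_k$ for any $x \in B_p(r)$, each such $x$ must lie within $\eps' r_k$ of $\cone_p(\spt(S_{r_0})) \cup \spt(\cone_p(\beta_k))$.

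The main obstacle is excluding the case where $x \in \spt(T) \cap B_p(r)$ is close to $\spt(\cone_p(\beta_k))$ yet far from the big cone. To close this gap, suppose for contradiction that $d(x, \cone_p(\spt(S_{r_0}))) > \eps r$; then $B_x(\eps r/8)$ is disjoint from the good cone, and the preceding argument forces $\spt(T) \cap B_x(\eps r/8) \subset N_{\eps' r_k}(\spt(\cone_p(\beta_k)))$. Lemma~\ref{lem:density} gives $\|T\|(B_x(\eps r/8)) \ge \theta_0 (\eps r/8)^n$, whereas covering $\spt(\cone_p(\beta_k))$ by balls of radius $\eps' r_k$ (whose $n$-dimensional Hausdorff measure is at most $r_k \cdot \M(\beta_k) \le \delta r_k^n$, requiring at most $\sim \delta/(\eps')^n$ such balls) and invoking the upper density bound for $T$ (inherited from the Lipschitz quasi-parametrization in Lemma~\ref{lem_quasiflat_are_quasiminimizers}) yields $\|T\|(N_{\eps' r_k}(\spt(\cone_p(\beta_k)))) \lesssim \delta r_k^n$. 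Choosing $\delta$ strictly smaller than $\theta_0 \eps^n / (C \cdot 256^n)$ for a suitable dimensional constant $C$ forces the desired contradiction and completes the proof.
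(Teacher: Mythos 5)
Your setup---extracting from the induction behind Lemma~\ref{lem_good_slices} a cycle $\al_k$ at a scale $r_k$ comparable to $r$, forming the filling $\cone_p(\ga_k)+\cone_p(\beta_k)+W_k$ of $S_{r_k}$, and playing it off against the piece $T\on B_p(r_k)$ via Corollary~\ref{cor_good_part_covers}---is sound and runs parallel to the paper's argument up to the treatment of the bad part. The genuine gap is in your final exclusion step. You claim that since $\M(\cone_p(\beta_k))\le r_k\cdot\M(\beta_k)\le\delta r_k^n$, the set $\spt(\cone_p(\beta_k))$ can be covered by roughly $\delta/(\eps')^n$ balls of radius $\eps' r_k$, and hence $\|T\|(N_{\eps' r_k}(\spt(\cone_p(\beta_k))))\lesssim\delta r_k^n$. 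But a mass (or Hausdorff measure) bound gives no control on covering numbers: a set of arbitrarily small, even zero, $\mathcal H^n$-measure---for instance the geodesic cone over a lower-dimensional portion of $\spt(\beta_k)$, or thin ``hairs'' in the closed support, which may be strictly larger than the rectifiable carrier on which the mass sits---can require arbitrarily many balls of radius $\eps' r_k$ to cover, and $T$ could a priori carry definite mass in the $\eps' r_k$-neighborhood of such a set. To convert the mass bound into a covering bound you would need an Ahlfors-type lower density bound for $\cone_p(\beta_k)$ (or for $\beta_k$) at every point of its support, and no such bound is available: $\beta_k$ is merely a leftover piece of $\al_k$, not a (quasi-)minimizer, and cones over cycles are not quasi-minimizing either. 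So the intended contradiction with $\|T\|(B_x(\eps r/8))\ge\theta_0\cdot(\eps r/8)^n$ does not follow as written.

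The paper neutralizes the bad part differently, and this is the missing idea: instead of keeping $\cone_p(\beta_r)$, replace $\beta_r$ by a minimal filling $\om_r$ of $\D\beta_r$. Minimality gives $\M(\om_r)\le\M(\beta_r)\le\delta r^{n-1}$ and, via the lower density bound for minimizers, $\spt(\om_r)\subset N_{\delta_1 r}(\spt(\D\beta_r))\subset N_{\delta_1 r}(\spt(\ga_r))$, so by convexity of the bicombing the cone over $\ga_r+\om_r$ stays in an $\frac{\eps}{2}r$-neighborhood of the cone over $\ga_r$, hence of $C_p(\spt(S_{r_0}))$. The isoperimetric inequality controls $\F(\beta_r,\om_r)$, so Lemma~\ref{lem_spt_outside_ball} still forces a minimal filling $\tau_r$ of $S_r-\ga_r-\om_r$ outside $B_p(r/2)$, and Corollary~\ref{cor_good_part_covers} is then applied to the chain $C_p(\ga_r+\om_r)+\tau_r$. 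If you substitute this ``swap the bad part for a minimal filling of its boundary'' step for your covering argument, the remainder of your proof goes through; as it stands, the measure-theoretic step is where the argument fails.
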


\begin{proof}
We choose $\delta$ and $c$ small enough, such that $\delta<(\frac{\eps}{2})^n\cdot\delta_0$ and $(c+c_0\cdot (2\delta)^{\frac{n}{n-1}})\leq c_1(\delta_0,1)$ where $c_1$ is the constant from Lemma~\ref{lem_spt_outside_ball}.
Then we choose $R(c,\delta)$ as in Lemma~\ref{lem_good_slices}. Set $$C=\hat\Theta+\delta+c+c_0\cdot (2\delta)^{\frac{n}{n-1}}$$ and choose $\ul{R}(\frac{\eps}{2},C)$ as in Corollary~\ref{cor_good_part_covers}.
Finally, set $$R_0=\max\{R(c,\delta), \ul{R}(\frac{\eps}{2},C)\}.$$
 
 By Lemma~\ref{lem_good_slices}, we find a cycle $\al_r\in \bZ_{n-1,c}(X)$ supported in $S_p(r)$ with $r\in[R,4R]$. It decomposes as $\al_r=\ga_r+\beta_r$  such that
 $\F(\al_r, S_r)\leq c\cdot r^n$, $\spt(\ga_r)\subset\cone_p(\spt(S_{r_0}))$ and $\M(\beta_r)\le \delta\cdot r^{n-1}$. 
 
 Let us choose a minimal filling $\omega_r$ of $\D \beta_r$. Then $\M(\omega_r)\leq\delta\cdot r^{n-1}$ by minimality and $\F(\beta_r,\omega_r)\leq c_0\cdot(2\delta)^{\frac{n}{n-1}}\cdot r^n$ by the isoperimetric inequality.
 Since $\spt(\omega_r)\subset N_{\delta_1 r}(\spt(\D \beta_r))$ with $\delta_1=(\frac{\delta}{\delta_0})^{\frac{1}{n}}<\frac{\eps}{2}$, we see 
 $C_p(\spt(\gamma_r+\omega_r))\cap B_p(r)\subset N_{\frac{\eps}{2} r}(C_p(\spt(\gamma_r)))$ by convexity.
 So it is enough to show $\spt(T\on B_p(r))\subset N_{\frac{\eps}{2} r}(C_p(\spt(\gamma_r+\omega_r)))$.
 
 From the triangle inequality we obtain $\F(S_r,\gamma_r+\omega_r)\leq(c+c_0\cdot(2\delta)^{\frac{n}{n-1}})\cdot r^n$. By our choice of $c$ and $\delta$, any minimal filling $\tau_r$ of $S_r-\gamma_r-\omega_r$
 will be carried in $X\setminus B_p(\frac{r}{2})$. We consider the chain $C_p(\gamma_r+\omega_r)+\tau_r$ with boundary $S_r$. It comes with mass control 
 \[\M(C_p(\gamma_r+\omega_r)+\tau_r)\leq (\hat\Theta+\delta+c+c_0\cdot(2\delta)^{\frac{n}{n-1}})\cdot r^n\leq C\cdot r^n.\] 
 Hence Corollary~\ref{cor_good_part_covers} implies 
 \[\spt(T\on B_p(r))\subset N_{\frac{\eps}{2} r}(C_p(\spt(\gamma_r+\omega_r))).\]
\end{proof}

\subsection{The Tits boundary of a Morse quasiflat}
We refer to Section~\ref{subsec:bicombing} for the definition of Tits cone and Tits boundary for a metric space with convex geodesic bicombing.
\begin{definition}
	\label{def:pointed}
A quasiflat $Q\subset X$ is \emph{pointed Morse} if for any asymptotic cone $X_\omega$ of $X$ with fixed base point, the inclusion $Q_\omega\to X_\omega$ induces injective maps on 
 local homology at each point in $Q_\omega$.
\end{definition}

If we allow the onset radius in Lemma~\ref{lem_decomposition_lemma}, Lemma~\ref{lem_good_slices}, Corollary~\ref{cor_quasiflat_is_close_to_cone} and 
Theorem~\ref{thm_cone_is_close_to_quasiflat} to depend on $p$ instead of just $d(p,Q)$, then these results continue to hold for pointed Morse quasiflats with the same proofs. We recall that $\partial_T Q$ is defined in Definition~
\ref{def_Tits_boundary}.

\begin{lem}\label{lem_ideal_equ_tits}
	
Let $Q\subset X$ be a pointed Morse quasiflat. 
Then its ideal boundary and its Tits boundary agree, $\D_T Q=\di Q$. 
\end{lem}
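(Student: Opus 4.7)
The plan is to establish the two inclusions $\D_T Q \subseteq \di Q$ and $\di Q \subseteq \D_T Q$ separately, with the real content lying in the latter.

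For the easy direction $\D_T Q \subseteq \di Q$, I would fix $\xi = [\rho] \in \D_T Q$ together with a witnessing sequence $x_k \in Q$ satisfying $d(x_k,\rho)/d(x_k,\rho(0)) \to 0$, and let $y_k \in \rho$ be a nearest point to $x_k$, so that $y_k \to \xi$ in the cone topology. Convexity of the bicombing applied to the pair of segments $\sigma_{p x_k}$ and $\sigma_{p y_k}$ (for a fixed basepoint $p$) forces these two segments to stay within $s\cdot d(x_k,y_k)$ of each other at parameter $s \in [0,1]$, and a standard comparison then shows that $x_k$ also cone-converges to $\xi$; hence $\xi \in \di Q$.

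For the reverse inclusion $\di Q \subseteq \D_T Q$, I would fix $\xi \in \di Q$ and pick the unique $\sigma$-ray $\rho_\xi$ based at $p$ representing $\xi$. Let $T \in \I_{n,\loc}(X)$ be a current representing $Q$ as in Lemma~\ref{lem_quasiflat_are_quasiminimizers} (so that $\spt T$ is within Hausdorff distance $a$ of $Q$), and choose a sequence $y_k \in \spt T$ converging to $\xi$ in the cone topology. The segments $\sigma_{p y_k}$ lie in $\cone_p(\spt T)$ and converge to $\rho_\xi$ uniformly on compact intervals, so $\rho_\xi \subseteq \ccone_p(\spt T)$. I would then invoke the pointed variant of asymptotic conicality (Theorem~\ref{thm_cone_is_close_to_quasiflat}): for any $\eps > 0$ and all $r$ exceeding a threshold depending on $\eps$ and $p$,
\[
\ccone_p(\spt T) \cap B_p(r) \subseteq N_{\eps r}(\spt T).
\]
Applied at the point $\rho_\xi(r)$, this produces $x_r \in \spt T$ with $d(x_r, \rho_\xi(r)) \leq \eps r$, whence $d(x_r, \rho_\xi) \leq \eps r$ and $d(p, x_r) \geq (1-\eps)r$. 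Selecting $\eps_k \to 0$ and $r_k \to \infty$ yields a sequence in $\spt T \subseteq N_a(Q)$ witnessing $\xi \in \D_T Q$, and each $x_{r_k}$ may be replaced by a nearby point of $Q$ without disturbing the sublinear estimate.

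The main obstacle will be verifying the inclusion $\rho_\xi \subseteq \ccone_p(\spt T)$, which rests on uniqueness of $\sigma$-rays based at $p$ representing a fixed boundary point together with closedness of the geodesic cone — both standard features of convex bicombings recalled in Section~\ref{subsec:bicombing}. Once this is in hand, pointed asymptotic conicality does the rest of the work.
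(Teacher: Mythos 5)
Your proof is correct and takes essentially the same route as the paper: the substantive inclusion $\di Q\subset\D_T Q$ is deduced from pointed asymptotic conicality (Theorem~\ref{thm_cone_is_close_to_quasiflat}) applied to the geodesic cone over $\spt T$, the only cosmetic difference being that you apply it to points of the limit ray $\rho_\xi$, whereas the paper applies it to points on the approximating segments from $p$ to $x_k\in Q$ and then passes to the limit. Your explicit convexity argument for the easy inclusion $\D_T Q\subset\di Q$ is fine (the paper leaves that direction implicit).
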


\begin{proof}
Suppose that $(x_k)$ is a sequence in $Q$ such that the geodesic segements $\rho_k$ from $p$ to $x_k$ converge to a geodesic ray $\rho$.
By asymptotic conicality (Theorem~\ref{thm_cone_is_close_to_quasiflat}), for given $\eps>0$ there exists $R_{\text{ac}}>0$ such that $d(x_k',Q)<\eps\cdot R_{\text{ac}}$ where $x_k'$ denotes the point on
$\rho_k$ at distance $R_{\text{ac}}$ from $p$. Since $(x_k')$ converges to the point on $\rho$ at distance $R_{\text{ac}}$ from $p$, we deduce the claim.
\end{proof}

\begin{prop}\label{prop_unique_tangent_cone_infty}
 Let $Q$ be an $L$-Lipschitz $(L,A)$ pointed $n$-dimensional Morse quasiflat represented by $T\in \bZ_{n,loc}(X)$. 
 Then $T$ has a unique tangent cone at infinity. Namely, for any base point $p\in X$ the rescalings $h_{p,\lambda\#}T$ converge with respect to local flat topology to a current $T_{p,0}\in \bZ_{n,loc}(X)$ with the 
 following properties.
 \begin{enumerate}
  \item $T_{p,0}$ is conical with respect to $p$, $h_{p,\lambda\#}T_{p,0}=T_{p,0}$;
  \item $\M(T_{p,0}\on B_r(p))\leq\Theta\cdot r^n$ for $r\geq 0$; 
  \item there exists a function $\delta:[0,\infty)\to[0,\infty)$ with $\lim\limits_{r\to\infty}\delta(r)=0$ depending only on $p,X,L,A,n$ and the Morse data of $Q$ such that for all $R\geq r$ holds
	\[d_H(Q \cap B_p(R),\spt(T_{p,0})\cap B_p(R))\le \delta(r)\cdot R;\] 
  \item  $\di \spt(T_{p,0})=\D_T Q$.
 \end{enumerate}
If $Q$ is a Morse quasiflat, then we can strengthen (3) such that $\delta$ depends on $d(p,Q)$ rather than $p$.
\end{prop}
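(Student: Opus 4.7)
The plan is to construct $T_{p,0}$ as a local-flat limit of the rescalings $h_{p,\lambda\#}T$ as $\lambda\to 0^+$, by a compactness argument, and then use conicality to identify the limit uniquely. Theorem~\ref{thm_cone_is_close_to_quasiflat} already provides the set-level picture; what remains is to upgrade this to current-level convergence and verify the four listed properties.

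First I would establish a uniform mass bound on the rescalings. Since $h_{p,\lambda}$ is $\lambda$-Lipschitz and satisfies $h_{p,\lambda}^{-1}(B_p(r))\subset B_p(r/\lambda)$, the upper density bound of Lemma~\ref{lem_quasiflat_are_quasiminimizers} yields
\[
\M(h_{p,\lambda\#}T\on B_p(r))\le \lambda^n\cdot \M(T\on B_p(r/\lambda))\le \Theta\cdot r^n
\]
for $\lambda\le 1$ and $r\ge \lambda a$. Boundary mass control on generic radii follows from the coarea inequality. The compactness theorem for integral currents then provides a subsequential local-flat limit $T_{p,0}\in \bZ_{n,\loc}(X)$ along any sequence $\lambda_k\to 0$.

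Every subsequential limit is conical: the semigroup identity $h_{p,\mu\#}\circ h_{p,\lambda\#}=h_{p,\mu\lambda\#}$ on currents passes to the limit while $\mu$ is held fixed, giving $h_{p,\mu\#}T_{p,0}=T_{p,0}$. For the support, Theorem~\ref{thm_cone_is_close_to_quasiflat} combined with Lemma~\ref{lem_ideal_equ_tits} shows that $\spt T\cap B_p(R)$ is $o(R)$-close to $\ccone_p(\D_T Q)\cap B_p(R)$; contracting by $\lambda$ turns this into $o(1)\cdot r$-closeness on $B_p(r)$, so $\spt T_{p,0}\subset \ccone_p(\D_T Q)$, while the lower filling bound of Lemma~\ref{lem_quasiflat_are_quasiminimizers} forces equality. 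This yields (2), (4), and (3) (the last using the identification of support together with Theorem~\ref{thm_cone_is_close_to_quasiflat}); the strengthened dependence on $d(p,Q)$ in the Morse case propagates directly from the corresponding dependence in the asymptotic conicality theorem.

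For uniqueness of the limit, I would reduce via conicality to a Cauchy property for slices. A conical local integral current supported in $\ccone_p(\D_T Q)$ is determined by any one of its slices $\slc{T_{p,0},d_p,r}$, and these slices arise as flat limits of the rescaled slices $\slc{h_{p,\lambda\#}T,d_p,r}$, which by the scaling property coincide with $h_{p,r/R\#}S_R$ for $R=r/\lambda$. The heart of the matter is showing these rescaled slices form a Cauchy sequence in the flat norm, which is essentially the content of Lemma~\ref{lem_good_slices} together with the neck-decomposition flat estimates. The main obstacle will be the simultaneous control of flat distances between slices at different radii, rather than just between a slice and its cone; but this is exactly where the downward induction on scales underlying Theorem~\ref{thm_visibility} supplies the quantitative Cauchy estimates needed.
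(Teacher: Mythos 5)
Your overall architecture (compactness, then a quantitative flat estimate coming from asymptotic conicality) is the right one, but two steps as written do not go through. First, the conicality/uniqueness order is circular. From $T_{p,0}=\lim_k h_{p,\lambda_k\#}T$ the semigroup identity only gives $h_{p,\mu\#}T_{p,0}=\lim_k h_{p,\mu\lambda_k\#}T$, a limit along a \emph{different} sequence of scales; without uniqueness you cannot identify it with $T_{p,0}$, and quasiflat currents have no monotonicity formula to substitute. Since your uniqueness argument then invokes conicality of the limit (``a conical current is determined by any one of its slices''), each step presupposes the other. Moreover the Cauchy estimate, which you yourself call the heart of the matter, is only attributed to Lemma~\ref{lem_good_slices} and the induction behind Theorem~\ref{thm_visibility}; those statements control the flat distance between a slice $S_r$ and a surgered cycle (good part plus small bad part) lying over \emph{one} larger slice, not the flat distance between two rescaled slices $h_{p,r/R\#}S_R$ and $h_{p,r/R'\#}S_{R'}$. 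The paper avoids both problems by working at the level of the full current and without conicality: Theorem~\ref{thm_cone_is_close_to_quasiflat} together with the quasi-retraction homotopy estimate (Corollary~\ref{cor_homotopy}) yields $\F\bigl(T\on B_p(R),\,C_p(S_{R_1})\on B_p(R)\bigr)\le C\Theta\eps R^{n+1}$ for all $R\in[R_0,R_1]$, so any two rescalings are flat-close to rescalings of one common cone; full convergence follows, and only then is conicality read off from the semigroup identity. Reordering your argument along these lines is necessary.

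Second, in (3) the inclusion $\spt T\cap B_p(r)\subset N_{\eps r}(\spt(T_{p,0}))$ is not forced by the lower filling bound of Lemma~\ref{lem_quasiflat_are_quasiminimizers}. That bound concerns slices by spheres centered at $p$ and only shows the limit is nontrivial at every scale; it does not exclude that, locally, mass of the rescaled currents disappears in the flat limit so that $\spt(T_{p,0})$ misses a cone of directions of $Q$ of definite size (supports may shrink under flat convergence, and $h_{p,\lambda\#}T$ is not obviously a uniformly quasi-minimizing family to which Lemma~\ref{lem:fill-density} applies directly). The paper closes exactly this gap with Corollary~\ref{cor_good_part_covers}, i.e.\ with the neck decomposition and hence the Morse property entering a second time: a competitor chain built from $T_{p,0}$ via Corollary~\ref{cor_homotopy} admits a coarse piece decomposition whose good part must cover $T\on B_p(r)$ up to $\eps r$. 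Some localized non-collapse statement of this kind is indispensable; without it (3), and with it (4), do not follow. A smaller imprecision: Theorem~\ref{thm_cone_is_close_to_quasiflat} only gives $\ccone_p(\spt T)\cap B_p(r)\subset N_{\eps r}(\spt T)$, not that $Q$ is sublinearly close to $\ccone_p(\D_T Q)$; the latter (Corollary~\ref{cor_sublinear_close}) is deduced in the paper \emph{from} this proposition, so it cannot serve as input here -- the set-level input in the other direction is Corollary~\ref{cor_quasiflat_is_close_to_cone}.
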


\begin{proof}
The upper density bound of $T$ implies via compactness (\cite[Theorem 2.3]{higherrank}) that $h_{p,\lambda\#}T$ subconverges in the local flat topology to a current $T_{p,0}\in \bZ_{n,loc}(X)$.
By Corollary~\ref{cor_homotopy} and Theorem~\ref{thm_cone_is_close_to_quasiflat}, we find for every $\eps>0$ a radius $R_0$ such that for all $R_1\geq R_0$ holds 
$\F(T\on B_p(R),C_p(S_{R_1})\on B_p(R))\leq C\cdot \Theta\cdot\eps\cdot R^{n+1}$ for all $R\in [R_0,R_1]$ and a constant $C$ depending only on $L, n$ and $X$. 
Hence for all $\lambda, \lambda'<\frac{r_0}{R_0}$ we obtain 
\[\F(h_{p,\lambda\#}T\on B_p(r_0),h_{p,\lambda'\#}T\on B_p(r_0))\leq 2C\cdot \Theta\cdot\eps\cdot r_0^{n+1}.\] 
Hence $h_{p,\lambda\#}T$ actually converges to $T_{p,0}$ as $\lambda\to 0$. Since  $h_{p,\lambda}\circ h_{p,\lambda'}=h_{p,\lambda\lambda'}$ holds, we see that $T_{p,0}$ is conical, hence (1).
Lower semicontinuity of mass with respect to weak convergence yields (2), the claim on the upper density bound of $T_{p,0}$. (4) follows from (3) and Lemma~\ref{lem_ideal_equ_tits}. We turn to (3).
For every $\lambda\in(0,1)$ we have $\spt(h_{p,\lambda\#}T)\subset h_{p,\lambda}(\spt(T))\subset \ccone_p(\spt(T))$. Hence $\spt(T_{p,0})\subset \ccone_p(\spt(T))$.
On the other hand, by Theorem~\ref{thm_cone_is_close_to_quasiflat}, Corollary~\ref{cor_homotopy} and Corollary~\ref{cor_good_part_covers}, we find for every $\eps>0$
an $R>0$ such that $\spt T\cap B_p(r)\subset N_{\eps r}(\spt(T_{p,0}))$ for all $r\geq R$. Together this shows (3).
\end{proof}

\begin{cor}\label{cor_sublinear_close}
	Let $Q$ be an $n$-dimensional pointed Morse $(L,A)$ quasiflat in a proper metric space $X$ with convex geodesic bicombing.
	There exists a function $\delta:[0,\infty)\to[0,\infty)$ with $\lim\limits_{r\to\infty}\delta(r)=0$ depending only on $p,X,L,A,n$ and the Morse data of $Q$ such that for all $R\geq r$ holds
	\[d_H(Q\cap B_p(R) , C_p(\D_T Q)\cap B_p(R))\le \delta(r)\cdot R.\]
 If $Q$ is a Morse quasiflat, then $\delta$ depends on $d(p,Q)$ rather than $p$.
\end{cor}

\begin{proof}
	As $T_{p,0}$ is conical with respect to $p$, we have $\spt(T_{p,0})\subset C_p(\D_T Q)$ by \cite[Lemma 7.2]{higherrank} and Lemma~\ref{lem_ideal_equ_tits}. But $C_p(\D_T Q)\subset\ccone_p(\spt(T))$ and the claim follows from
	Theorem~\ref{thm_cone_is_close_to_quasiflat} and Proposition~\ref{prop_unique_tangent_cone_infty} (3).
\end{proof}

\begin{thm}
	\label{thm:uniqueness body}
	Suppose $X$ is a proper metric space with convex geodesic bicombing. Let $Q_1$ and $Q_2$ be two Morse quasiflats in $X$. Suppose $\partial_T Q_1=\partial_T Q_2$. 
	Then $d_H(Q_1,Q_2)<C$ where $C<\infty$ depends only on $\dim Q_1$ and the Morse data of $Q_1$ and $Q_2$.
\end{thm}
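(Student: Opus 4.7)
The plan is to combine Corollary~\ref{cor_sublinear_close} with the uniqueness of tangent cone at infinity from Proposition~\ref{prop_unique_tangent_cone_infty} and the lower filling-density bound of Lemma~\ref{lem:fill-density}, in order to upgrade the sublinear Hausdorff estimate forced by $\partial_T Q_1=\partial_T Q_2$ into a uniform bound via contradiction. Fix a common basepoint $p\in Q_1$ and apply Corollary~\ref{cor_sublinear_close} to both $Q_i$. Since $\cone_p(\partial_T Q_1)=\cone_p(\partial_T Q_2)$, a triangle inequality produces a sublinear function $\delta$, depending only on $\dim Q_1$, $X$ and the Morse data of the $Q_i$, such that
\[
d_H(Q_1\cap\B{p}{R},\,Q_2\cap\B{p}{R})\le\delta(r)\cdot R\qquad\text{for all }R\ge r.
\]
Supposing for contradiction $d_H(Q_1,Q_2)=\infty$, I may (after possibly swapping $Q_1$ and $Q_2$) extract a sequence $x_k\in Q_1$ with $d_k:=d(x_k,Q_2)\to\infty$; the sublinear bound then forces $R_k:=d(p,x_k)$ to satisfy $d_k/R_k\to 0$, and in particular $d(x_k,\cone_p(\partial_T Q_1))\le\delta(r)R_k$ for every fixed $r$ and $k$ large.

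The key second step is to exploit the uniqueness of tangent cones at infinity. Represent each $Q_i$ by a locally quasi-minimizing current $T_i\in\bZ_{n,\loc}(X)$ via Lemma~\ref{lem_quasiflat_are_quasiminimizers}. Proposition~\ref{prop_unique_tangent_cone_infty} supplies unique conical tangent currents $T_{i,\infty}$ at infinity with $\spt(T_{i,\infty})\subset\cone_p(\partial_T Q_i)$. Because $\cone_p(\partial_T Q_1)$ is bilipschitz to $\mathbb E^n$ by item (1) of Theorem~\ref{prop_sublinear_close_intro}, a conical $n$-current with full support in such a cone is determined by a single integer multiplicity; combined with the matching upper and lower density bounds on the $T_i$ this yields $T_{1,\infty}=T_{2,\infty}$. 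Unscaling via the rescalings $h_{p,\lambda}$, together with capping off the spherical boundary slices using the isoperimetric Theorem~\ref{thm:isop-ineq}, this identification produces for every $\eps>0$ and every sufficiently large $R$ a compactly supported filling $W_R\in\bI_{n+1,c}(X)$ of the cycle obtained from $(T_1-T_2)\on\B{p}{R}$ by adding small slice fillings on $\partial\B{p}{R}$, with $\M(W_R)\le\eps\cdot R^{n+1}$ and $\spt(W_R)\subset N_{\eps R}(\cone_p(\partial_T Q_1))$.

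Finally, combine this with Lemma~\ref{lem:fill-density} applied to the quasi-minimizer $T_1$ at the point $x_k$. Take $R=R_k+d_k$ and set $V_k':=W_R\on\B{x_k}{d_k/2}$; since $\spt(T_2)=Q_2$ is disjoint from $\B{x_k}{d_k}$, the identity $\partial W_R=T_1-T_2$ inside $\B{p}{R}$ shows that $\spt(T_1-\partial V_k')\cap\B{x_k}{d_k/2}=\emptyset$ up to a spherical slice of $W_R$ at radius $d_k/2$ whose mass is controlled by the coarea inequality. Lemma~\ref{lem:fill-density} with radius $d_k/2$ then forces $\M(V_k')\ge\theta_1\cdot(d_k/2)^{n+1}$. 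A complementary upper bound is obtained from the support localization of $W_R$: because $\spt(W_R)\subset N_{\eps R}(\cone_p(\partial_T Q_1))$ and $\B{x_k}{d_k/2}$ is itself contained in a thin neighborhood of the cone (by the sublinear closeness above), an $(n+1)$-dimensional volume count combined with the mass bound $\M(W_R)\le\eps R^{n+1}$ gives $\M(V_k')\le c\cdot\eps\cdot d_k^{n+1}$ for some constant $c=c(n,L,A,X)$. Choosing $\eps<\theta_1/(c\cdot 2^{n+1})$ produces the required contradiction, yielding the uniform bound on $d_H(Q_1,Q_2)$.

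The main obstacle is the second step: showing $T_{1,\infty}=T_{2,\infty}$ as integral currents rather than merely as supports, and constructing the filling $W_R$ with both the mass bound $\eps R^{n+1}$ and the sublinear support localization simultaneously. This requires combining Corollary~\ref{cor_homotopy_topdim} with the Plateau problem (Theorem~\ref{thm:plateau}) and a quantitative version of the convergence $h_{p,\lambda\#}T_i\to T_\infty$, so that $W_R$ is compactly supported and concentrated near the bilipschitz Euclidean cone; the secondary difficulty is the coarea-based estimate controlling the spherical slice of $W_R$ at radius $d_k/2$, since this slice must be shown to have filling mass strictly less than the $\theta_1(d_k/2)^{n+1}$ lower bound coming from Lemma~\ref{lem:fill-density}.
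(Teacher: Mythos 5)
Your first step matches the paper's: Corollary~\ref{cor_sublinear_close} applied to both quasiflats (with $\partial_T Q_1=\partial_T Q_2$) gives sublinear Hausdorff closeness. At that point the paper simply invokes \cite[Proposition 10.4 and Theorem 9.5]{HKS}, which supply exactly the step you then try to reprove from scratch: that a quasiflat sublinearly close to a Morse quasiflat is in fact at uniformly bounded Hausdorff distance. All the genuine content of the theorem beyond Corollary~\ref{cor_sublinear_close} sits in that imported result, and it is precisely in your substitute for it that the proposal breaks down.

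The fatal step is the upper bound $\M(V_k')\le c\cdot\eps\cdot d_k^{n+1}$ for $V_k'=W_R\on \B{x_k}{d_k/2}$. The only mass information available on $W_R$ is the global bound $\M(W_R)\le\eps R^{n+1}$ with $R\approx R_k$, and since $d_k/R_k\to 0$ this says nothing at scale $d_k$: a restriction of a current can carry essentially all of the total mass, and the proposed ``$(n{+}1)$-dimensional volume count'' is not available because the mass of an integral current is not bounded by the Hausdorff measure (or any notion of volume) of its support --- multiplicities are unbounded, and $N_{\eps R}(\cone_p(\partial_T Q_1))\cap \B{x_k}{d_k/2}$ is a genuinely $(n{+}1)$-dimensional region whose size is independent of $\eps$ once $\eps R\ge d_k$. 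Even under the most optimistic heuristic you would only get $\M(V_k')\lesssim d_k^{n+1}$ with no factor of $\eps$, which cannot be made to beat the lower bound $\theta_1(d_k/4)^{n+1}$ from Lemma~\ref{lem:fill-density} by shrinking $\eps$; so no contradiction is produced. Two smaller issues: the identification $T_{1,\infty}=T_{2,\infty}$ requires an orientation/multiplicity argument (the ``matching upper and lower density bounds'' do not pin down equal multiplicity, and the two representing currents may a priori be oppositely oriented), and your argument is by contradiction for a fixed pair $Q_1,Q_2$, so at best it yields $d_H(Q_1,Q_2)<\infty$ rather than a constant depending only on the dimension, the quasi-isometry constants and the Morse data, as the statement asserts. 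To close the real gap one has to use the Morse property at the scale $d_k$ where the two quasiflats diverge --- e.g.\ via the coarse neck property and a neck decomposition as in Lemma~\ref{lem_decomposition_lemma} applied near $x_k$ --- rather than attempting to localize a single global filling constructed at scale $R_k$; this is what the cited results of \cite{HKS} accomplish.
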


\begin{proof}
	By Corollary~\ref{cor_sublinear_close}, $Q$ and $Q'$ are at sublinear distance from each other. Now the theorem follows from \cite[Proposition 10.4 and Theorem 9.5]{HKS}.
\end{proof}

\subsection{Cycle at infinity}
\begin{definition}
	\label{morse}
Let $W$ be a topological space.	
For $[\sigma]$ in $H_k(W,\mathbb Z)$, we define the \textit{homological support set} of $[\sigma]$, denoted $S_{[\sigma]}$, to be $\{z\in W\setminus Y\mid i_{\ast}[\sigma]\neq \textmd{Id}\}$, here $i:H_{k}(W,\mathbb Z)\to H_{k}(W,W\setminus\{z\},\mathbb Z)$ is the inclusion homomorphism.

Take $[\sigma]$ in $H_k(W,\mathbb Z)$. The homology class $[\sigma]$ is \emph{immovable} if for any open set $O$ containing such that $S_{[\sigma]}\subset O$, there exist chain $\beta$ and cycle $\gamma$ such that $\textmd{Im}\ \gamma\subset O$ and $\sigma=\partial\beta+\gamma$. 
\end{definition}

Recall that we use $\CT(X)$ to denotes the Tits cone of $X$. Let $\cC(\partial_T Q)$ be the subspace of $\CT(X)$ made of all Hausdorff classes of reparameterization of $\si$-rays which are in $\partial_T Q$ (topologically $\cC(\partial_T Q)$ is a cone over $\partial_T Q$). We equip $\cC(\partial_T Q)$ with the induced metric from $\CT(X)$. Note that when $X$ is CAT$(0)$, $\cC(\partial_T Q)$ is the Euclidean cone over $\partial_T Q$.
\begin{prop}
	\label{prop:full support}
	Let $Q$ be an $n$-dimensional $(L,A)$ pointed Morse quasiflat. Then 
	\begin{enumerate}
		\item $\cC(\partial_T Q)$ is bilipschitzly homeomorphic to $\mathbb E^n$;
		\item the map 
		$ H_{n-1}(\partial_T Q,\D_T Q\setminus\{p\},\mathbb Z)\to  H_{n-1}(\partial_T X,\partial_T X\setminus\{p\},\mathbb Z)$ is injective for each point $p\in\D_T Q$;
		\item $\D_TQ$ is the homological support set of some immovable class $[\si]\in \tilde H_{n-1}(\D_T X)$.
	\end{enumerate}
	
\end{prop}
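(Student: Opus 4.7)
The plan is to identify the Euclidean cone $\cone(\D_T Q)$ with the ultralimit $Q_\om\subset X_\om$ provided by asymptotic conicality, and then to transfer the pointed Morse condition from $X_\om$ to $\D_T X$ via the standard cone-suspension isomorphism on local homology.

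First, for~(1), I would fix a basepoint $p\in X$ and scales $\la_k\to\infty$, and form the ultralimit $X_\om:=\lim_\om(X,\tfrac{1}{\la_k}d,p)$. Write $Q_\om$ for the ultralimit of the rescaled quasiflats. Since $Q$ is an $L$-Lipschitz $(L,A)$-quasidisc, the rescalings are $(L,A/\la_k)$-quasi-isometric embeddings of $\R^n$, so $Q_\om$ is an $L$-bilipschitz image of $\R^n$. At the same time, $\CT X$ embeds isometrically into $X_\om$ via $\rho(t)\mapsto(\rho(t\la_k))_k$, with the Tits metric matching the ultralimit distance by convexity of the bicombing. Corollary~\ref{cor_sublinear_close} then identifies $Q_\om$ with the Tits cone $\cone_p(\D_T Q)\subset\CT X$, which as a metric space is the Euclidean cone on $(\D_T Q,d_T)$. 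This yields~(1).

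For~(2), fix $\xi\in\D_T Q$, pick $t>0$, and let $q\in\cone(\D_T Q)$ be the non-apex point at radius $t$. The pointed Morse hypothesis applied at $q\in Q_\om$ gives injectivity of
\[
H_n(Q_\om,Q_\om\setminus\{q\};\Z)\to H_n(X_\om,X_\om\setminus\{q\};\Z),
\]
and since this map factors through $H_n(\CT X,\CT X\setminus\{q\};\Z)$, the arrow $H_n(\cone(\D_T Q),\cone(\D_T Q)\setminus\{q\};\Z)\to H_n(\cone(\D_T X),\cone(\D_T X)\setminus\{q\};\Z)$ is itself injective. A sector neighborhood of $q$ in $\cone(Y)$ is homeomorphic to an open interval times a neighborhood of $\xi$ in $Y$, so by excision and the Künneth formula (no Tor term, since the interval factor has free $\Z$ local homology) one gets the natural cone-suspension isomorphism
\[
H_n(\cone(Y),\cone(Y)\setminus\{q\};\Z)\cong\tilde H_{n-1}(Y,Y\setminus\{\xi\};\Z),
\]
compatible with $\D_T Q\hookrightarrow\D_T X$. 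Injectivity transfers to the Tits boundary level, proving~(2). For~(3), part~(1) makes $\D_T Q$ a topological $(n-1)$-sphere with a fundamental class $[\si_0]\in H_{n-1}(\D_T Q;\Z)$ represented by a singular cycle $\si$ supported in $\D_T Q$; let $[\si']\in\tilde H_{n-1}(\D_T X;\Z)$ be its image. For $\xi\in\D_T Q$, $[\si_0]$ maps to a generator of $H_{n-1}(\D_T Q,\D_T Q\setminus\{\xi\};\Z)$, and~(2) forces its image in $H_{n-1}(\D_T X,\D_T X\setminus\{\xi\};\Z)$ to be nonzero; for $\xi\notin\D_T Q$ the cycle $\si$ avoids $\xi$, so the local class vanishes. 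Hence $S_{[\si']}=\D_T Q$. Any open $O\supset\D_T Q$ already contains $\spt(\si)$, so $\si$ itself represents $[\si']$ with image in $O$ (take $\ga=\si$, $\be=0$), proving immovability.

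The main obstacle will be the naturality of the cone-suspension isomorphism across the inclusion $\D_T Q\hookrightarrow\D_T X$: one must compare sector neighborhoods of $q$ in $\cone(\D_T Q)$ and $\cone(\D_T X)$ and check compatibility of the excision and Künneth steps. Once this is in hand, the remaining steps are routine consequences of asymptotic conicality and the definition of pointed Morse.
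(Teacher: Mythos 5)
Your argument follows the paper's own proof almost step for step: the asymptotic cone with fixed basepoint, the isometric embedding of the Tits cone $\CT X\hookrightarrow X_\om$, the identification $Q_\om=C_{p_\om}(\D_T Q)$ via Corollary~\ref{cor_sublinear_close} (giving (1)), the transfer of the pointed-Morse local-homology injectivity first to the Tits cone and then down to the Tits boundary by the cone--suspension/K\"unneth isomorphism (giving (2)), and then the support-set and immovability argument for (3).

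The one step that is not justified is the opening move of your part (3): you assert that (1) makes $\D_T Q$ a topological $(n-1)$-sphere carrying a fundamental class. Knowing that the Euclidean cone over $\D_T Q$ is bilipschitz homeomorphic to $\E^n$ does not directly imply $\D_T Q\cong\mathbb S^{n-1}$; recognizing a space whose cone is a manifold as a sphere is exactly the kind of statement that fails naively (double-suspension phenomena), and the paper explicitly remarks after this proposition that the sphere statement is not a direct consequence of (1). Fortunately you do not need it: all that is required is a class $[\si_0]\in\tilde H_{n-1}(\D_T Q;\Z)\cong\Z$ whose image in $H_{n-1}(\D_T Q,\D_T Q\setminus\{\xi\};\Z)$ generates for every $\xi\in\D_T Q$, and this comes from the fundamental class of $C_{o}(\D_T Q)\cong\E^n$ via the same cone--suspension isomorphism (applied at the tip, respectively at non-apex points over $\xi$) whose naturality you already set up in (2); this is precisely how the paper produces $[\si]$, through the commutative square comparing local homology of $C_o(\D_T Q)\subset\CT X$ with $\tilde H_{n-1}(\D_T Q)\to\tilde H_{n-1}(\D_T X)$. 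With that replacement, the rest of your (3) --- nonvanishing of the local class at $\xi\in\D_T Q$ via (2), vanishing at $\xi\notin\D_T Q$ because the representing cycle has image in $\D_T Q$, and immovability by taking $\ga=\si$ and $\be=0$ --- is the paper's argument.
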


This generalizes the fact that pointed Morse quasi-geodesics give rise to isolated points in the Tits boundary.

\begin{proof}
Let $(X_\omega,p_\omega)$ be an asymptotic cone of $X$ with fixed base point $p\in X$ and denote by $Q_\om\subset X_\om$ the ultralimit of $Q$. Let $\CT(X)$ be the Tits cone of $X$ with cone point $o$ and let $i:\CT(X)\to X_\om$ be the canonical isometric embedding as in \cite[Lemma 10.6]{kleiner1999local} such that $i(o)=p_\om$. The map $i$ sends $C_{o}(\D_T Q)$ to the cone $C_{p_\om}(\D_T Q)\subset X_\om$.
Corollary~\ref{cor_sublinear_close} implies $C_{p_\om}(\D_T Q)=Q_\om$. Thus $C_{p_\om}(\D_T Q)$ is bilipschitz to $\mathbb E^n$ and (1) holds. 
Since $Q$ is Morse, \[H_n(C_{p_\om}(\D_T Q),C_{p_\om}(\D_T Q)\setminus\{q\},\mathbb Z)\to H_n(X_\om,X_\om\setminus\{q\},\mathbb Z)\] is injective for each $q\in C_{p_\om}(\D_T Q)$. Hence we deduce the injectivity of \[H_n(C_{o}(\D_T Q),C_{o}(\D_T Q)\setminus\{q\},\mathbb Z)\to H_n(\CT(X),\CT(X)\setminus\{q\},\mathbb Z)\] for each for each $q\in C_{o}(\D_T Q)$.
Now (2) follows from the Künneth formula (cf. \cite[pp. 190, Proposition 2.6]{dold2012lectures}). Consider the following commuting diagram
there is a commutative diagram
$$\begin{array}{ccc}
  H_n(C_{o}(\D_T Q),C_{o}(\D_T Q)\setminus\{q\},\mathbb Z) & \rightarrow  & H_n(\CT(X),\CT(X)\setminus\{q\},\mathbb Z) \\
   \downarrow   &   & \downarrow \\
     \tilde H_{n-1}(\D_T Q,\mathbb Z)&  \rightarrow       &  \tilde H_{n-1}(\D_T X,\mathbb Z)
\end{array}$$
where the two downward arrows are isomorphisms. The fundamental class of $C_{o}(\D_T Q)$ gives rise to $[\si]\in H_{n-1}(\D_T X,\mathbb Z)$ under the diagram. Then $[\si]$ can be represented by a singular cycle whose image is in $\D_T Q$. Moreover, (2) implies $S_{[\si]}=\D_T Q$. Thus (3) follows.
\end{proof}

Besides treating $\D_T Q$ as the homological support set of some class, $\D_T Q$ can be alternatively interpreted as the support set of some integral current as follows. 

For a current $T\in\I_{n,loc}(X)$ we define its {\em density at infinity} by 
\[\Theta_\infty(T)=\limsup\limits_{r\to\infty}\frac{\|T\|(B_p(r))}{r^n}.\]

\begin{cor}\label{cor_mass_of_Tits}
	Let $Q$ be an $n$-dimensional pointed Morse quasiflat  represented by $T\in \bZ_{n,loc}(X)$. Let $T_{p,0}$ be as in Proposition~\ref{prop_unique_tangent_cone_infty}. 
	Then there exists a cycle $\si\in\bZ_{n-1,c}(\D_T X)$ with  
	$\spt(\si)=\D_T Q$.
\end{cor}

\begin{proof}
	It follows from Theorem~\ref{thm_cone_is_close_to_quasiflat}, that $\D_T Q\subset\D_T X$ is compact. By Proposition~\ref{prop_unique_tangent_cone_infty}, $T_{p,0}$
	is conical and \cite[Theorem 9.3]{higherrank} provides a cycle $\si\in\bZ_{n-1,c}(\D_T X)$ with  
	$\spt(\si)=\di Q$ (again, \cite[Theorem 9.3]{higherrank} does not depend on the rank assumption in \cite{higherrank}, see the paragraph in \cite{higherrank} before \cite[Theorem 9.3]{higherrank}).
	Lemma~\ref{lem_ideal_equ_tits} completes the proof.
\end{proof}

\begin{remark}
Proposition~\ref{prop:full support} (3) and Corollary~\ref{cor_mass_of_Tits} are compatible in the sense that by the proof of Proposition~\ref{prop:full support}, the class in Proposition~\ref{prop:full support} (3) can be represented by a Lipschitz cycle with its image contained in $\D_T Q$. Then the integral current associated with this Lipschitz cycle is $\si$ in Corollary~\ref{cor_mass_of_Tits}.
\end{remark}

\begin{remark}
	\label{rmk:sphere}
It is natural to ask whether $\D_T Q$ is homeomorphic, or bilipschitz to the standard sphere. This is not a direct consequence of Proposition~\ref{prop:full support} (1). By \cite{siebenmann1979complexes}, if $\D_T Q$ is bilipschitz to a piecewise Euclidean simplicial complex, then $\D_T Q$ is homeomorphic to a sphere. This holds, e.g. when $X$ is a $\cat(0)$ cube complex \cite{boundary}. 
\end{remark}

Let $X$ be a proper metric space with convex geodesic bicombing. 
Let $\mathbf {Z}_{MQ}(\D_TX)$ be the collection of cycles arising from $Q$ (cf. Corollary \ref{cor_mass_of_Tits}), with $Q$ ranging over all possible pointed Morse quasiflats in $X$. The following is a consequence of the fact that quasi-isometries between metric spaces with convex geodesic bicombing send (pointed) Morse quasiflats to (pointed) Morse quasiflats \cite[Proposition 6.27]{HKS}.

\begin{cor}
	\label{cor_boundary_map}
Let $q:X\to Y$ be a quasi-isometry between two proper metric spaces with convex geodesic bicombing. Then $q$ induces a bijection $$q_*:\mathbf {Z}_{MQ}(\D_TX)\to \mathbf {Z}_{MQ}(\D_TY).$$ 
\end{cor}

\subsection{Remark on Hausdorff distance to a cone}
We refer to Definition~\ref{def_Tits_boundary} for the Tits boundary $\partial_T Q$ of a subset $Q$ of $X$, and to Definition~\ref{def_exp} for the definition of exponential map $\exp_p:\CT(X)\to X$.
\begin{prop}
	\label{prop_hausdorff}
Let $Q$ be an $n$-dimensional Morse quasiflat in a proper metric space $X$ with convex geodesic bicombing. Suppose the restriction  $\exp_p:\cC(\partial_T Q)\to X$ is a quasi-isometric embedding for some (hence any) base point $p\in X$. Then $d_H(C_p(\partial _T Q),Q)<\infty$ for some (hence any) base point $p\in X$.
\end{prop}
\begin{proof}
We deduce from  Corollary~\ref{cor_sublinear_close} that
	\begin{equation*}
\label{eq_linear_div}
\lim_{r\to\infty} \frac{d_H(B_p(r)\cap Q,B_p(r)\cap C_p(\partial _T Q))}{r}=0
\end{equation*}
As $\cC(\partial_T Q)$ is bilipschitz to $\mathbb E^n$ (Proposition~\ref{prop:full support} (1)), we know $C_p(\partial _T Q)$ is a quasiflat in $X$. Thus \cite[Proposition 10.4]{HKS} and \cite[Proposition 9.10]{HKS} imply that $d_H(C_p(\partial _T Q),Q)<\infty$.
\end{proof}

We now give examples where the conclusion of Proposition~\ref{prop_hausdorff} either fails (Example~\ref{example_exp}) or holds (Example~\ref{example_positive}).

\begin{example}
	\label{example_exp}
Let $X$ be $\mathbb E^2$ with the Euclidean metric. Let $A=\{(x,y)\in X\mid y\ge e^x\}$. We glue two copies of $X$ along the convex subset $A$, and denote the resulting $\cat(0)$ space by $Y$. Note that $\partial_T A$ is an arc of length $\pi/2$, and $\partial_T Y$ is obtained by gluing two copies of a circle of length $2\pi$ along the arc of length $\pi/2$ corresponding to $\partial_T A$.

Let $Q$ be $Y$ with the interior of $A$ removed. We claim $Q$ is a 2-dimensional Morse quasiflat in $Y$ which is not at finite Hausdorff distance from any geodesic cone over a subset of $\partial_T X$. Indeed, $Q$ is a quasiflat because $Q$ is a union of two pieces, each of them is bilipschitz to a Euclidean half plane. $Q$ is Morse because it is top-dimensional. Because of Corollary~\ref{cor_quasiflat_is_close_to_cone}, showing $Q$ is not Hausdorff close to any geodesic cone reduces to showing $d_H(Q,C_p(\partial_T Q))=\infty$. Note that
$\partial_T Q$ is a circle of length $3\pi$ obtained from $\partial_T Y$ by removing the interior points of $\partial_T A$. Thus $d_H(Q,C_p(\partial_T Q))=\infty$ by construction.

It is worth noting that the configuration in this example also arises naturally in the study of group theory, notably in the recent work of Lamy and Przytycki \cite{lamy2018presqu} where they define a ``generalized building'' acted upon by tame automorphism groups.
\end{example}

\begin{example}
	\label{example_positive}
Let $X$ be a symmetric space of non-compact type or a Euclidean building, and let $Q\subset X$ be a Morse quasiflat. Then $\partial_T Q$ is the support set of a top-dimensional cycle in $\partial_T X$. Thus $\partial_T Q$ is a union of Weyl chambers, each of which is the Tits boundary of a Weyl cone in $X$. The exponential map $\exp_p:\cC(\partial_T Q)\to X$ is a quasi-isometric embedding, and the above proposition implies that $Q$ is at finite Hausdorff distance from a finite union of Weyl cones. This recovers a key result in \cite{kleiner1997rigidity,eskin1997quasi}. A similar discussion applies when $X$ is a $\cat(0)$ cube complex. See Theorem~\ref{thm:cube Morse lemma}. Other examples where Proposition~\ref{prop_hausdorff} applies include 2-dimensional $\cat(0)$ complexes with finite shape as in \cite{xie2005tits,bks1}, universal covers of closed 4-dimensional manifolds with non-positively curved analytic Riemannian metric \cite{hummel1998tits} .
\end{example}

\section{Morse quasiflats of Euclidean growth in CAT(0) spaces}
\label{sec_rigidity}

The following can be shown similarly to \cite[Lemma A.11]{huang_quasiflat}.

\begin{lemma}[geodesic extension property]\label{lem_geo_extension}
Let $Y$ be a CAT(0) space and $P\subset Y$ a bilipschitz flat of full support. Then every geodesic segment $\rho:[0,1]\to Y$ with $\rho(1)\in P$
extends to a  geodesic ray $\hat\rho:[0,\infty)\to Y$ with $\hat\rho([1,\infty))\subset P$.
\end{lemma}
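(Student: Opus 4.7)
The plan is to establish a local ``weak extension'' statement via the full support hypothesis, and then bootstrap it to a global geodesic ray using completeness of $Y$, closedness of $P$, and a symmetric backward extension argument.

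\textbf{Weak extension.} Let $p:=\rho(0)$, $q:=\rho(1)$ and $r:=d(p,q)$. The key local claim is: for every $\eps>0$ there exists $q'\in P$ with $0<d(q,q')\leq\eps$ and $q\in[p,q']$. Assume otherwise; choose, in the bilipschitz chart of $P$, a small $n$-disk $D\subset P$ around $q$ such that no $x\in D\setminus\{q\}$ satisfies $q\in[p,x]$. By full support the class $[D,\partial D]\in H_n(D,\partial D)$ has nontrivial image in $H_n(Y,Y\setminus\{q\})$. Consider the $1$-Lipschitz convex retraction $\pi:Y\to\overline{\B{p}{r}}$ sending each $x$ with $d(p,x)>r$ to the point of $[p,x]$ at distance $r$ from $p$, together with the geodesic homotopy $H_t(x)$ from $\operatorname{id}_Y$ to $\pi$ obtained by sliding $x$ along $[p,x]$ toward $p$. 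The identity $H_t(x)=q$ on $D$ would force $q\in[p,x]$, hence $x=q$; consequently $H$ restricts to a homotopy of pair maps $(D,\partial D)\to(Y,Y\setminus\{q\})$ from the inclusion to $\pi|_D$. But $\pi|_D$ factors through the pair $(\overline{\B{p}{r}},\overline{\B{p}{r}}\setminus\{q\})$, whose $n$-th relative homology vanishes: both $\overline{\B{p}{r}}$ and $\overline{\B{p}{r}}\setminus\{q\}$ deformation retract onto $\{p\}$ by sliding along $[x,p]$, and this segment avoids $q$ for every $x\neq q$ in $\overline{\B{p}{r}}$ (since $q\in[x,p]$ would force $d(p,x)\geq r$, whence $x=q$). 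This contradicts the nontriviality of $[D,\partial D]$.

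\textbf{Bootstrap to a global ray.} By $\cat(0)$ uniqueness of geodesics, the weak extensions of $\rho$ agree on overlaps and nest along a single maximal geodesic $\gamma:[0,T)\to Y$. If $T<\infty$, then completeness of $Y$ lets $\gamma$ extend to $\gamma(T)$, which lies in $P$ by closedness; weak extension at $\gamma(T)$ (interior in the bilipschitz flat $P$) then extends $\gamma$ past $T$, contradicting maximality. So $T=\infty$. It remains to prove $A:=\gamma^{-1}(P)\cap[1,\infty)=[1,\infty)$. The set $A$ is closed, contains $1$, has no right-isolated points (weak extension at each $\gamma(t)$, $t\in A$), and is unbounded by the argument just given. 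Suppose $(a,b)$ is a component of $[1,\infty)\setminus A$ with $b<\infty$; then $b\in A$, and we pick $t_*\in A$ with $t_*>b$. Applying the weak extension argument to the geodesic $[\gamma(t_*),\gamma(b)]$, which ends at the interior point $\gamma(b)\in P$, produces $q^*\in P$ arbitrarily close to $\gamma(b)$ with $\gamma(b)\in[\gamma(t_*),q^*]$. By $\cat(0)$ uniqueness, $q^*=\gamma(b-s)$ for arbitrarily small $s>0$, contradicting $(a,b)\cap A=\emptyset$. Hence $A=[1,\infty)$ and $\hat\rho:=\gamma$ is the required ray.

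\textbf{Main obstacle.} The delicate point is the interplay between the contradiction hypothesis and the homotopy: the negation of weak extension is precisely the condition that $\pi$ carries no point of $D\setminus\{q\}$ onto $q$, which is exactly what makes $H$ a valid homotopy of pair maps into $(Y,Y\setminus\{q\})$. The backward weak extension used in the bootstrap is the same argument applied at the same interior point $\gamma(b)$ but with a different incoming geodesic, and closedness of $P$ together with $\cat(0)$ uniqueness of geodesics rules out extension-free gaps along $\gamma$.
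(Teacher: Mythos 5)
The ``weak extension'' step is fine: the homotopy $H$ along the geodesics $[p,x]$ is a homotopy of pairs into $(Y,Y\setminus\{q\})$ precisely because of the contradiction hypothesis, the pair $\bigl(\B{p}{r},\B{p}{r}\setminus\{q\}\bigr)$ has trivial $n$-th homology by geodesic contraction to $p$, and this contradicts full support. This is the standard core of the argument (the paper itself only cites \cite[Lemma A.11]{huang_quasiflat} here). The genuine gap is in the bootstrap, and it stems from one recurring error: in a $\cat(0)$ space geodesics between two given points are unique, but geodesic \emph{extensions past an endpoint} are not (think of a tripod, or of two planes glued along a line with $P$ one of the planes). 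So (i) ``the weak extensions of $\rho$ agree on overlaps and nest along a single maximal geodesic'' is false --- the segments $[p,q'_1]$ and $[p,q'_2]$ produced for different $\eps$ all contain $[p,q]$ but may branch immediately after $q$; (ii) ``$A$ has no right-isolated points'' does not follow, because the weak extension at $\gamma(t)$ produces a point $q'\in P$ lying on \emph{some} extension of $[p,\gamma(t)]$, with no reason for $q'$ to lie on $\gamma$; and (iii) ``by $\cat(0)$ uniqueness, $q^*=\gamma(b-s)$'' fails for the same reason: $[\gamma(t_*),q^*]$ agrees with $\gamma$ only up to $\gamma(b)$ and may then leave $\gamma$. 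In the two-planes example the extension of $[p,q]$ that continues straight into the second plane never meets $P$ again, so an argument that first fixes a maximal ray and then tries to show it meets $P$ densely cannot work; one must actively steer the extension into $P$.

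What is missing is the passage from ``arbitrarily short extensions with \emph{endpoint} in $P$'' to ``a single ray whose entire trace beyond $q$ lies in $P$.'' Note that the weak extension does not even guarantee that the short segment $[q,q']$ is contained in $P$ --- only its endpoints are. A workable repair is: for each $\eps>0$, iterate the weak extension (choosing each new endpoint so the segments nest by construction, and using completeness of $Y$ and closedness of $P$ at limit stages) to build a unit-speed extension $\gamma_\eps$ of $\rho$ such that $\gamma_\eps^{-1}(P)\cap[1,T]$ is $\eps$-dense; then let $\eps\to 0$ and extract a limit geodesic by Arzel\`a--Ascoli, using that for fixed $t$ the points $\gamma_\eps(t)$ lie in shrinking neighborhoods of the set $P\cap \B{p}{LT}$, which is compact because $P$ is bilipschitz to $\mathbb E^n$ (one cannot use properness of $Y$ itself, since the lemma is applied to asymptotic cones). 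Closedness of $P$ then gives $\gamma([1,T])\subset P$, and a diagonal argument in $T$ finishes. As written, your proof establishes only that $[p,q]$ admits arbitrarily small extensions ending in $P$, which is strictly weaker than the lemma.
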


We denote by $\mathcal H^n$ the $n$-dimensional Hausdorff measure. For a subset $A\subset X$ of a metric space $X$ we define its $n$-dimensional Hausdorff
volume growth as $\Theta_n(A)=\limsup\limits_{r\to\infty}\frac{\mathcal H^n(A\cap B_p(r))}{r^n}$. Hence  $\Theta_n(\E^n)=\omega_n$, the volume of the unit ball in $\E^n$.

\begin{lemma}\label{lem_bilip_CAT(0)}
Let $Z$ be a CAT(0) space which is bilipschitz to $\E^n$. 
Suppose that its volume growth is at most Euclidean, $\Theta_n(Z)\leq \omega_n$. Then $Z$ is isometric to $\E^n$.
\end{lemma}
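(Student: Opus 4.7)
The plan is to exploit geodesic extension together with the CAT($0$) inequality to obtain a $1$-Lipschitz bijection from $Z$ onto its tangent cone at a basepoint, and then use the volume hypothesis to promote this map to a global isometry. First, I would apply Lemma~\ref{lem_geo_extension} with $Y=P=Z$: since $Z$ is trivially a bilipschitz flat of full support in itself, every geodesic in $Z$ extends to a ray. Fix $p\in Z$ and let $T_pZ$ denote the Euclidean cone over the space of directions $\Sigma_pZ$ at $p$; then $\Sigma_pZ$ is CAT($1$) and $T_pZ$ is CAT($0$). By uniqueness of geodesics together with geodesic extension, the exponential map $\exp_p\colon T_pZ\to Z$ is a bijection, and the CAT($0$) inequality at $p$,
\[
d_Z(x,y)^2\ge d(p,x)^2+d(p,y)^2-2d(p,x)d(p,y)\cos\angle_p(x,y),
\]
asserts precisely that $\exp_p$ is distance non-decreasing. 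Equivalently, $\log_p:=\exp_p^{-1}\colon Z\to T_pZ$ is $1$-Lipschitz and sends $B_p^Z(r)$ bijectively onto $B_0^{T_pZ}(r)$, so $\mathcal H^n(B_0^{T_pZ}(r))\le\mathcal H^n(B_p^Z(r))$. Since $\mathcal H^n(B_0^{T_pZ}(r))=\tfrac{r^n}{n}\mathcal H^{n-1}(\Sigma_pZ)$, dividing by $r^n$ and letting $r\to\infty$ yields
\[
\mathcal H^{n-1}(\Sigma_pZ)\le n\,\Theta_n(Z)\le n\omega_n=\mathcal H^{n-1}(\mathbb S^{n-1}).
\]

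Next, $\Sigma_pZ$ is CAT($1$), bilipschitz to $\mathbb S^{n-1}$ (inherited from $Z\simeq_{\mathrm{bilip}}\mathbb E^n$), and the global geodesic extension in $Z$ induces an antipodal involution on $\Sigma_pZ$ and hence geodesic extension up to length $\pi$. A CAT($1$) volume comparison then provides the reverse estimate $\mathcal H^{n-1}(\Sigma_pZ)\ge n\omega_n$, whose equality case forces $\Sigma_pZ$ to be isometric to $\mathbb S^{n-1}$; consequently $T_pZ$ is isometric to $\mathbb E^n$.

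Finally, the CAT($0$) volume comparison gives $\mathcal H^n(B_p^Z(r))\ge\omega_n r^n$ for every $r$; combined with $\Theta_n(Z)\le\omega_n$ and the monotonicity of $r\mapsto\mathcal H^n(B_p^Z(r))/r^n$ in CAT($0$), this forces $\mathcal H^n(B_p^Z(r))=\omega_n r^n$ for all $r$. Thus the $1$-Lipschitz bijection $\log_p\colon Z\to\mathbb E^n$ preserves the Hausdorff measure of every ball; a standard metric differentiation argument then upgrades it to an isometry, yielding $Z\cong\mathbb E^n$. The main obstacle is the CAT($1$) rigidity step for $\Sigma_pZ$, which is morally a lower-dimensional version of the statement being proved.
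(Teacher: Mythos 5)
There is a genuine gap, and it sits exactly where you flag it: the ``CAT($1$) volume comparison'' with rigidity for $\Sigma_p Z$. The first half of your argument (geodesic extension via Lemma~\ref{lem_geo_extension}, the $1$-Lipschitz logarithm map, and the resulting upper bound $\mathcal{H}^{n-1}(\Sigma_p Z)\le n\,\Theta_n(Z)\le n\omega_n$) is fine. But the reverse inequality $\mathcal{H}^{n-1}(\Sigma_p Z)\ge \mathcal{H}^{n-1}(\mathbb{S}^{n-1})$ for a compact geodesically complete $\mathrm{CAT}(1)$ space, \emph{together with the characterization of the equality case as the round sphere}, is not an off-the-shelf comparison theorem; the sharp lower volume bound for $\mathrm{CAT}(1)$ spheres and its rigidity case are results of essentially the same depth as the lemma you are trying to prove (they are tied to isoembolic/filling-type inequalities, where only non-sharp constants are elementary). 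Since the lemma is about $\mathrm{CAT}(0)$ spaces with Euclidean volume growth and your missing step is about $\mathrm{CAT}(1)$ links with an antipodal map, there is no induction on dimension available within the statement itself, so the argument is not self-repairing. Two smaller points in the same step: geodesic extension gives each direction \emph{an} opposite at angle $\pi$, not a well-defined antipodal involution (antipodes in a $\mathrm{CAT}(1)$ space need not be unique), and the injectivity of $\log_p$ (needed both for the measure bookkeeping and for the final ``bijection'') does not follow from uniqueness of geodesics alone, since $\angle_p(x,y)=0$ does not force $x=y$ in a $\mathrm{CAT}(0)$ space.

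The paper's proof circumvents precisely this difficulty by never analyzing the link at an \emph{arbitrary} point. Instead it uses that, because $Z$ is bilipschitz to $\mathbb{E}^n$, the link $\Sigma_p Z$ is already a round $(n-1)$-sphere at \emph{almost every} $p$ (metric differentiation), reduces to the case where $Z$ is a Euclidean cone, and then studies the distance-nondecreasing maps $f_p:\Sigma_p Z\to\partial_T Z$. The Euclidean growth hypothesis forces these maps to have dense image, an ultralimit along a ray produces a surjection $\mathbb{S}^{n-1}\to\partial_T Z$ preserving distance to a pole, whence every point of $\partial_T Z$ has a unique antipode; this yields that one-sidedly asymptotic lines are parallel and $Z$ splits off a line in every direction, so $Z\cong\mathbb{E}^n$. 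If you want to salvage your approach, you would either have to supply a proof of the $\mathrm{CAT}(1)$ rigidity for $\Sigma_p Z$ (hard), or move the basepoint to a generic point where the link is already round --- at which point you are essentially reconstructing the paper's argument. Finally, even granting the tangent-cone step, the concluding ``a $1$-Lipschitz measure-preserving bijection onto $\mathbb{E}^n$ is an isometry'' is a Lipschitz--volume rigidity statement that deserves an actual argument (e.g.\ via the splitting or via metric differentiation of the inverse), not just a citation to ``standard'' differentiation.
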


\begin{proof}
It is enough to show that $\D_T Z$ is a round sphere.
After possibly passing to an asymptotic cone, we may assume that $Z$ itself is a Euclidean cone
over $\D_T Z$ with tip $o$.
Since $Z$ is bilipschitz to $\E^n$,  the link $\Si_p Z$ is isometric to  a round $(n-1)$-sphere for almost all points $p$. Since $Z$ is a Euclidean cone,
all but possibly the tip has round $(n-1)$-spheres as links. For every point $p\in Z$ we obtain a map $f_p:\Si_p Z\to \D_T Z$ which is distance nondecreasing by choosing
a geodesic ray for each direction.
From the Euclidean growth assumption and the CAT(0) property, it follows that the image of $f_p$ has full $\mathcal H^{n-1}$-measure in
$\D_T Z$. Since $Z$ is a Euclidean cone bilipschitz to $\E^n$, the $\mathcal H^{n-1}$-measure of a ball in $\D_T Z$ is positive. Therefore,
the image of $f_p$ is actually dense. 

Now we show that each point in $\D_T Z$ has a unique antipode. Consider a geodesic ray $\rho$ with $\rho(0)=o$ and $[\rho]=\xi\in\D_T Z$. Set $p_k=\rho(k)$ and $f_k=f_{p_k}$. Take an ultralimit of the $f_k$
to obtain a distance nondecreasing map $f_\infty:\mathbb S^{n-1}\to\D_T Z$. The image of $f_\infty$ is complete and therefore $f_\infty$
is onto. Note that for $\zeta\in\D_T Z$ holds $\lim\limits_{k\to \infty}\angle_{p_k}(\xi,\zeta)=\angle_T(\xi,\zeta)$ where $\angle_T$ denotes the Tits angle. Hence, if $(\zeta_k)$ is a sequence in $\D_T Z$ with $\om\lim_{k\to\infty} \zeta_k=\zeta$, then
$\om\lim_{k\to \infty}\angle_{p_k}(\xi,\zeta_k)=\angle_T(\xi,\zeta)$. This shows that
$f_\infty$ sends the north pole to $\xi$ and preserves the distance to the north pole. 
Therefore $\xi$ has a unique antipode, the image of the south pole.

As a consequence, any two lines in $Z$ which are one-sided asymptotic are actually parallel. This shows that $Z$ splits isometrically as $Z\cong l\times l^\perp$
for every line $l\subset Z$. Hence $Z$ is isometric to $\E^n$.
\end{proof}

\begin{lemma}\label{lem_euc_vol}
Let $Y$ be a CAT(0) space and $P\subset Y$ an $n$-bilipschitz flat of full support. 
Suppose that the volume growth of $P$ is at most Euclidean, $\Theta_n(P)\leq \omega_n$. 
Then $P$ is a flat in $Y$.
\end{lemma}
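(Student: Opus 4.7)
The plan is to show that $P$ is convex in $Y$ and then invoke Lemma~\ref{lem_bilip_CAT(0)}: once convexity is in hand, $(P, d_Y|_P)$ is a CAT(0) space (as a convex subset of the CAT(0) space $Y$), it is bilipschitz to $\mathbb E^n$ by assumption, and its volume growth is at most Euclidean, so Lemma~\ref{lem_bilip_CAT(0)} identifies it isometrically with $\mathbb E^n$, whence $P$ is a flat in $Y$.

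To establish convexity, I fix $x, y \in P$ and apply the geodesic extension property (Lemma~\ref{lem_geo_extension}) twice. A first application to the segment $[x, y]$ produces a geodesic extension $\hat\rho$ past $y$ whose tail lies in $P$; choosing a point $z$ on this tail, the three points $x, y, z$ sit in this order on the geodesic line $L \subset Y$ underlying $\hat\rho$, with $y$ strictly between $x$ and $z$. A second application, this time to $[z, y]$, produces a geodesic extension $\hat\sigma$ past $y$ whose tail likewise lies in $P$. If $\hat\sigma$ continues along $L$ in the direction opposite to $z$, namely toward $x$ and beyond, then its tail contains the sub-segment $[y, x]$, so $[y, x] \subset P$. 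Running the same argument for every pair of points in $P$ yields the desired convexity.

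The delicate step is verifying that the second extension $\hat\sigma$ actually follows $L$. In a general CAT(0) space, extensions past an endpoint may branch, so the ray provided by Lemma~\ref{lem_geo_extension} need not a priori coincide with the continuation of $L$ through $x$. To rule out branching one uses that $P$ is bilipschitz to $\mathbb E^n$ and of full support, so a neighborhood of $y$ within $P$ is a topological $n$-manifold; this local Euclidean structure, combined with the constraint that $\hat\sigma$ stays in $P$, forces it to agree with $L$ in a neighborhood of $y$. The uniqueness of geodesics between pairs of points in CAT(0) spaces then propagates this coincidence, so that $\hat\sigma$ traces out $L$ through $x$ and beyond. Once $[y, x] \subset P$ is obtained in this way, convexity of $P$ is established, and Lemma~\ref{lem_bilip_CAT(0)} completes the proof.
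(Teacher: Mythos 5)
There is a genuine gap, and it sits exactly at the step you yourself flag as delicate. Your route to convexity never uses the hypothesis $\Theta_n(P)\le\omega_n$, and without that hypothesis the conclusion "a bilipschitz flat of full support is convex" is simply not available: the topological fact that a neighborhood of $y$ in $P$ is an $n$-manifold does not rule out branching of geodesic extensions inside $P$. For a concrete obstruction, let $Y=P$ be the Euclidean cone over a circle of length $>2\pi$: this is CAT(0), $P$ is bilipschitz homeomorphic to $\E^2$, it is a manifold and trivially of full support, yet at the cone point a geodesic entering $y$ admits a whole family of geodesic extensions, all staying in $P$. So "the extension $\hat\sigma$ stays in $P$ and $P$ is locally Euclidean (topologically)" does not force $\hat\sigma$ to follow $L$; indeed the segment $[y,x]$ of $L$ need not lie in $P$ at all a priori, which makes the claim that $\hat\sigma$ "agrees with $L$ near $y$" essentially circular. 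The same problem recurs in your propagation step: even if $\hat\sigma$ agreed with $L$ on an initial piece, uniqueness of geodesics between endpoints does not prevent $\hat\sigma$ from branching off $L$ at a later point, so you would again need a non-branching statement that you have not established. Since the only hypothesis that excludes cone-type singularities with excess angle is the Euclidean growth bound, any correct proof of convexity must invoke it.

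This is also where your argument diverges from the paper. The paper reaches the same endgame (convexity of $P$, then Lemma~\ref{lem_bilip_CAT(0)}), but proves convexity by a volume comparison rather than by a two-sided extension trick: at almost every $p\in P$ the link $\Sigma_p P\subset\Sigma_p Y$ is a round $(n-1)$-sphere, and Lemma~\ref{lem_geo_extension} produces, for every direction $v\in\Sigma_p P$, a geodesic ray from $p$ in direction $v$ lying entirely in $P$. By CAT(0) comparison the union of these rays already has $\mathcal H^n$-measure at least $\omega_n r^n$ in $B_p(r)$, so the assumption $\Theta_n(P)\le\omega_n$ forces $\mathcal H^n(P\cap B_p(r))\equiv\omega_n r^n$ and forces every point of $P$ to lie on a geodesic from $p$ contained in $P$; convexity follows, and Lemma~\ref{lem_bilip_CAT(0)} finishes. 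If you want to salvage your write-up, you should replace the "no branching because $P$ is a manifold" claim by this kind of measure-theoretic rigidity argument, which is precisely where $\Theta_n(P)\le\omega_n$ enters.
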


\begin{proof}
Since $P$ is a bilipschitz flat, it has links $\Si_p P\subset \Si_p Y$   which are
round $(n-1)$-spheres at almost all points. From the geodesic extension property, we see that for every $v\in \Si_p P$ there exists a geodesic ray $\rho$ with $\dot\rho(0)=v$ and which lies entirely in $P$. The Euclidean growth assumption and the CAT(0) property imply 
$\mathcal H^n(P\cap B_p(r))\equiv\omega_n r^n$ and that every point in $P\cap B_p(r)$ can be joined to $p$ by a geodesic lying in $P$. In particular,
$P$ is convex and the claim follows from Lemma~\ref{lem_bilip_CAT(0)}.
\end{proof}

\begin{thm}\label{thm_growth_rigidity body}
 Let $X$ be a proper CAT(0) space. Let $Q\subset X$ be an $n$-dimensional Morse quasiflat represented by a current $T\in\bZ_{n,loc}(X)$.
Suppose that the density at infinity of $T$ is at most Euclidean, $\Theta_\infty(T)\leq \omega_n$.
 Then there exists an $n$-flat $F\subset X$ such that  $d_H(Q,F)<C$ where $C$ depends only on $L, A, n, X$ and the Morse data of $Q$.
\end{thm}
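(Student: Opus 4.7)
The plan is to exhibit the tangent cone at infinity $P:=\spt(T_{p,0})$ of $T$ as a genuine flat in $X$, and then invoke Theorem~\ref{thm:uniqueness body} to conclude that $Q$ lies within bounded Hausdorff distance of $P$.

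First I would produce $T_{p,0}\in\bZ_{n,\loc}(X)$ via Proposition~\ref{prop_unique_tangent_cone_infty}, so that $P$ is $p$-conical, sublinearly close to $Q$ (Corollary~\ref{cor_sublinear_close}), and coincides with $\cone_p(\D_T Q)$. Weak lower semicontinuity of mass under local flat convergence, together with the scaling identity $\|h_{p,\la\#}T\|(B_p(r))=\la^n\|T\|(B_p(r/\la))$, transfers the hypothesis $\Theta_\infty(T)\leq\omega_n$ into the clean global bound
\[
\|T_{p,0}\|(B_p(r))\leq\omega_n\,r^n\qquad\text{for every }r>0.
\]

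Next, I would identify the Tits boundary. Passing to any asymptotic cone $X_\om$ based at $p$, the ultralimit $Q_\om$ is bilipschitz to $\E^n$ and of full support in $X_\om$ by the Morse hypothesis and Proposition~\ref{prop:full support}(1). Since $Q$ can be represented by a Lipschitz bilipschitz quasidisc (Lemma~\ref{lem_lip_quasidisc}), the mass of $T$ agrees with $\mathcal H^n\!\restr Q$ up to a controlled multiplicity, and the Euclidean density hypothesis descends to $\mathcal H^n(Q_\om\cap B_{p_\om}(r))\leq\omega_n r^n$ in the cone via an ultralimit argument for uniformly bilipschitz discs. Lemma~\ref{lem_euc_vol} then forces $Q_\om\subset X_\om$ to be a flat isometric to $\E^n$. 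Combined with the canonical isometric embedding $\CT X\hookrightarrow X_\om$ sending the cone point to $p_\om$ and identifying $Q_\om$ with the Tits cone $C_o(\D_T Q)$, this forces $\D_T Q$ to be isometric to the round unit $(n-1)$-sphere in the Tits metric.

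The main step is then to transfer this rigidity back to $X$ by verifying the hypotheses of Lemma~\ref{lem_euc_vol} for $P\subset X$. Bilipschitzness of $P$ to $\E^n$ is obtained by combining the bilipschitz identification $C_o(\D_T Q)\cong\E^n$ from Proposition~\ref{prop:full support}(1) with the $p$-conicality of $P$ and its sublinear closeness to the $L$-Lipschitz $(L,A)$-quasiflat $Q$: at large scales $P$ inherits the bilipschitz constants of $Q$, and the radial homothety $h_{p,\la}$ propagates this control to every scale. Full support of $P$ in $X$ follows from the nontriviality of $T_{p,0}$ as an integer current, which realizes a nontrivial local $n$-homology class at each interior point of $P$. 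The volume growth estimate is immediate from the mass bound above, since $\mathcal H^n(P\cap B_p(r))\leq\|T_{p,0}\|(B_p(r))\leq\omega_n r^n$. Lemma~\ref{lem_euc_vol} now upgrades $P$ to a genuine flat in $X$, and Theorem~\ref{thm:uniqueness body} applied to the flat $F:=P$ and the Morse quasiflat $Q$, which share the same Tits boundary $\D_T Q$, yields the constant $C$ with $d_H(Q,F)<C$. The main difficulty will be the bilipschitz assertion for $P\subset X$: in general $d_X$ is strictly smaller than the Tits cone metric on $\cone_p(\D_T Q)$, so extracting a uniform lower bilipschitz constant requires carefully exploiting both the quasi-isometric structure of $Q$ and the $p$-conicality of $P$.
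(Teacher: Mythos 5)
Your first half runs parallel to the paper: transfer the Euclidean density bound to the asymptotic cone, note that $Q_\om$ is a bilipschitz flat of full support (this is exactly the Morse hypothesis, so no extra argument is needed there), apply Lemma~\ref{lem_euc_vol} in $X_\om$, and conclude that $\D_T Q$ is a round $(n-1)$-sphere. That part is sound, although your route to the sharp constant $\omega_n$ via ``Hausdorff measure of ultralimits of bilipschitz discs'' is looser than the paper's, which channels the bound through the boundary cycle of Corollary~\ref{cor_mass_of_Tits} ($\mathcal H^{n-1}(\D_T Q)\le\mathcal H^{n-1}(\mathbb S^{n-1})$) and the fact that $Q_\om$ is the Euclidean cone over $\D_T Q$.

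The genuine gap is in the second half, which is the crux of the theorem: producing an actual flat in $X$. You propose to apply Lemma~\ref{lem_euc_vol} inside $X$ to $P=\spt(T_{p,0})$, but its two hypotheses are not established. First, the claim that $P$ is bilipschitz to $\E^n$ (and the identification $P=\cone_p(\D_T Q)$, for which the paper only proves one inclusion) is unjustified, and your proposed fix does not work: sublinear closeness to $Q$ only controls pairs of points whose mutual distance is comparable to their distance from $p$, and the homothety $h_{p,\la}$ scales separation and radius by the same factor, so pairs with separation much smaller than their radius never enter the regime where the quasi-isometric lower bound of $Q$ applies. Indeed, geodesic cones over round Tits spheres from an arbitrary basepoint need not be bilipschitz flats at all: rays from $p$ asymptotic to different boundary points can coincide on initial segments and then branch (already visible for a geodesic line in a tree with $p$ off the line), so the distance on $\cone_p(\D_T Q)$ induced from $X$ can be strictly and non-uniformly smaller than the Tits cone metric. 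Second, full support of $P$ in $X$ is asserted in one sentence but not proved; the Morse hypothesis gives full support only for ultralimits in asymptotic cones, and Lemma~\ref{lem_euc_vol} genuinely needs it (its proof uses the geodesic extension property inside $P$). The paper bypasses both issues: having shown $\D_T Q$ is a round sphere, it uses Proposition~\ref{prop:full support} to see that $\D_T Q$ does not bound a hemisphere in $\D_T X$ and then invokes Leeb's rigidity result (\cite[Proposition 2.1]{leeb_rigidity}) to produce an $n$-flat $F\subset X$ with $\D_T F=\D_T Q$, finishing with \cite[Proposition 10.4 and Theorem 9.5]{HKS} to get the uniform Hausdorff bound (your appeal to Theorem~\ref{thm:uniqueness body} would additionally require knowing $F$ is Morse). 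Without Leeb's theorem or an equivalent substitute, your argument does not yield the flat, so as written the proposal has a genuine gap.
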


\begin{proof}
By Proposition~\ref{prop_unique_tangent_cone_infty}, we see $\Theta_\infty(T_{p,0})\leq \Theta_\infty(T)$. Hence Corollary~\ref{cor_mass_of_Tits} provides a cycle $\si\in\bZ_{n-1,c}(\D_T X)$ with 
$\M(\si)\leq\mathcal H^{n-1}(\mathbb S^{n-1})$ and $\spt(\si)=\D_T Q$. In particular, $\mathcal H^{n-1}(\D_T Q)\leq\mathcal H^{n-1}(\mathbb S^{n-1})$.

Let $Q_\om$ be an ultralimit of $Q$ in an asymptotic cone $X_\om$ of $X$. By Theorem~\ref{thm_cone_is_close_to_quasiflat}, $Q_\om$ is isometric to a Euclidean cone over $\D_T Q$.
From the Morse property, we know that $Q_\om$ is a bilipschitz flat of full support, and by the above estimate, $Q_\om$ has at most Euclidean volume growth in
$X_\om$. Lemma~\ref{lem_euc_vol} implies that $Q_\om$ is a flat.  Hence $\D_T Q$
is isometric to a round $(n-1)$-sphere.
 From Proposition~\ref{prop:full support} we know that $\partial_T Q$ does not bound a hemissphere in $\partial_T X$.
 Hence \cite[Proposition 2.1]{leeb_rigidity} implies that there is an $n$-flat $F\subset X$ with $\partial_T F=\partial_T Q$.  It follows from \cite[Proposition 10.4 and Theorem 9.5]{HKS} that $Q$ is at uniformly finite Hausdorff distance from $F$.
\end{proof}

\appendix

\section{Some examples of quasi-isometric classification}
\label{sec_appendix}
The goal of this appendix is to prove Corollary~\ref{cor_qi_classification}.

\subsection{Background on graph products}
Let $\Gamma$ be a finite simplicial graph. For each vertex $v\in\Gamma$, we associated a vertex group $G_v$. Let $G_{\Gamma}$ be the graph products of the $G_v$'s over $\Gamma$. Each full subgraph $\Gamma'\subset\Gamma$ induces an embedding $G_{\Gamma'}\to G_{\Gamma}$, which gives a \emph{standard subgroup} of $G_\Gamma$. If $\Gamma'$ is a (maximal) complete subgraph of $\Gamma$, then $G_{\Gamma'}$ is the product of its vertex groups, hence we call $G_{\Gamma'}$ a \emph{(maximal) standard product subgroup} of $G_\Gamma$ (when $\Gamma'=\emptyset$, $G_{\Gamma'}$ is the identity subgroup, which is also treated as a standard product subgroup).
The left cosets of a standard (product) subgroups are called  \emph{standard (product) cosets}. 
The \emph{rank} of a standard coset $gG_{\Gamma'}$ is defined to be the cardinality of vertices in $\Gamma'$. Two standard cosets $g_1G_{\Gamma_1}$ and $g_2G_{\Gamma_2}$ are \emph{parallel} if $\Gamma_1=\Gamma_2$ and $g^{-1}_2g_1$ are contained in a standard subgroup $G_{\Gamma'}$ of form $G_{\Gamma'}=G_{\Gamma_1}\times G_{\Gamma'_1}$ (in particular $\Gamma'=\Gamma_1\circ\Gamma'_1$). Parallelism between standard cosets forms an equivalence relationship.

Define the \emph{extension graph} of $G_\Gamma$ \cite{kim2013embedability}, denoted by $E(G_\Gamma)$, as follows. Each vertex of $E(G_\Gamma)$ corresponds to a parallel class of rank 1 standard cosets. Two vertices of $E(G_\Gamma)$ are joined by an edge if there exist representatives $g_1G_{v_1},g_2G_{v_2}$ from these two parallel classes such that they are contained in a common standard product coset of rank 2 (in particular $v_1$ and $v_2$ are adjacent). Note that complete subgraphs of $E(G_\Gamma)$ with $k$ vertices are in 1-1 correspondence with parallel classes of standard product cosets of rank $k$, and maximal complete subgraphs of $E(G_\Gamma)$ are in 1-1 correspondence with maximal standard product cosets of $G_\Gamma$.

Define the \emph{right-angled building} of $G_\Gamma$ \cite{davis1994buildings}, denoted by $B(G_\Gamma)$, as follows. The collection of all standard product cosets form a poset $\mathcal P$ under inclusion. Each interval in this poset is a Boolean lattice. $B(G_\Gamma)$ is a cube complex whose $0$-skeleton is can be identified with $\mathcal P$. Cubes of $B(G_\Gamma)$ correspond to intervals in $\mathcal P$. The rank of a vertex in $B(G_\Gamma)$ is defined to be the rank of the associated standard coset.

Let $G_\Gamma$ and $H_\Gamma$ be two graph products such that their vertex groups $G_v$ and $H_v$ are countably infinite. Then there exists an isomorphism of cube complexes $i:B(G_\Gamma)\to B(H_\Gamma)$ which preserves rank of vertices (see \cite[Proposition 1.2]{haglund2003constructions}, or \cite[Lemma 3.15 and Corollary 5.23]{huang2018groups}). Restricting $i$ to rank 0 vertices, we obtain a bijection $i':G_\Gamma\to H_\Gamma$ sending standard product cosets to standard product cosets. Thus $i'$ maps parallel rank 1 standard cosets to parallel rank 1 standard posets. Then $i$ induces an isomorphism $i_*:E(G_\Gamma)\to E(H_\Gamma)$.

We define $\Gamma$ to be \emph{rigid} if the associated right-angled Artin group $A_\Gamma$ has finite outer automorphism group. Any atomic graph is rigid.  
\begin{prop}
	\label{prop_combinatorial_rigidity}
Let $\Gamma_1$ and $\Gamma_2$ be finite simplicial graphs. Let $G_{\Gamma_1}$ and $H_{\Gamma_2}$ be two graph products of finitely generated infinite groups. Suppose $\Gamma_1$ and $\Gamma_2$ are rigid. Suppose there exists a quasi-isometry $q:G_{\Gamma_1}\to H_{\Gamma_2}$ and $C>0$ such that both $q$ and its quasi-isometric inverse map maximal standard product cosets to maximal standard products cosets up to Hausdorff distance $C$ ($C$ does not depend on the cosets). Then there exists a graph isomorphism $f:\Gamma_1\to \Gamma_2$ such that $G_{v}$ is quasi-isometric to $H_{f(v)}$ for any $v\in\Gamma_1$.
\end{prop}

\begin{proof}
In the proof we will use the vocabulary of coarse containment and coarse intersection from \cite[Section 2.1]{mosher2004quasi}.	
We start with the observation that two standard cosets $g_1G_{\Gamma'_1}$ and $g_2G_{\Gamma'_2}$ are parallel if and only if they have finite Hausdorff distance. The only if direction is clear. To see the if direction, note that $d_H(g_1G_{\Gamma'_1}g^{-1}_1,g_2G_{\Gamma'_2}g^{-1}_2)<\infty$, then $g_1G_{\Gamma'_1}g^{-1}_1\cap g_2G_{\Gamma'_2}g^{-1}_2$ is finite index in both $g_1G_{\Gamma'_1}g^{-1}_1$ and $g_2G_{\Gamma'_2}g^{-1}_2$ by \cite[Corollary 2.4]{mosher2004quasi}. As each vertex group is infinite, \cite[Corollary 3.6]{antolin2015tits} implies that $g_1G_{\Gamma'_1}g^{-1}_1=g_2G_{\Gamma'_2}g^{-1}_2$. Now \cite[Lemma 3.2]{antolin2015tits} implies that $g_1G_{\Gamma'_1}$ and $g_2G_{\Gamma'_2}$ are parallel.
	
As $\Gamma_1$ and $\Gamma_2$ are rigid, each vertex is an intersection of maximal complete subgraphs. Thus each rank 1 standard subgroups is an intersection of maximal standard product subgroups. By \cite[Lemma 2.2]{mosher2004quasi}, $q$ maps rank 1 standard cosets to rank 1 standard cosets up to finite Hausdorff distance. By the observation in the previous paragraph, we know $q$ induces a bijection of the vertex sets $q_*:E^{(0)}(G_{\Gamma_1})\to E^{(0)}(H_{\Gamma_2})$. We deduce from \cite[Corollary 2.4]{mosher2004quasi} and \cite[Section 3]{antolin2015tits} that two rank 1 standard cosets correspondence to adjacent vertices in the extension graph if and only if they are coarsely contained in a common maximal standard product coset. Thus $q_*$ extends to a graph isomorphism $q_*:E(G_{\Gamma_1})\to E(H_{\Gamma_2})$.
	
Let $A_{\Gamma_i}$ be the right-angled Artin group defined on $\Gamma_i$. As discussed before, there are rank preserving isomorphisms $i_1:B(A_{\Gamma_1})\to B(G_{\Gamma_1})$ and $i_2:B(H_{\Gamma_2})\to B(A_{\Gamma_2})$, with induced isomorphisms $(i_1)_*:E(A_{\Gamma_1})\to E(G_{\Gamma_1})$ and $(i_2)_*:E(H_{\Gamma_2})\to E(A_{\Gamma_2})$. Let $i_*=(i_2)_*\circ q_*\circ(i_1)_*$. Then \cite[Lemma 4.12]{huang2017quasi} implies $i_*$ is induced by a rank preserving isomorphism $i:B(A_{\Gamma_1})\to B(A_{\Gamma_2})$. Hence $q_*$ is induced by a rank preserving isomorphism $q':B(G_{\Gamma_1})\to B(H_{\Gamma_2})$. Then $q'$ sends rank 0 vertices to rank 0 vertices, and links of rank 0 vertices in $B(G_{\Gamma_1})$ (resp. $B(H_{\Gamma_2}$) is $\Gamma_1$ (resp. $\Gamma_2$). Thus $\Gamma_1$ and $\Gamma_2$ are isomorphic. We restrict $q'$ to rank 0 vertices to obtain bijection $q'':G_{\Gamma_1}\to H_{\Gamma_2}$. For any maximal standard product coset $F\subset G_{\Gamma_1}$, it follows from the construction of $q''$ that $q''(F)$ and $q(F)$ has finite Hausdorff distance. As every point in $G_{\Gamma_1}$ is the intersection of maximal standard product cosets containing this point, by \cite[Lemma 2.2]{mosher2004quasi} $q''$ is at a uniform bounded distance from $q$. In particular $q''$ is a quasi-isometry and the proposition follows.
\end{proof}

\subsection{Atomic graph products}
We start with a simple observation. For a flat $F$ in a CAT(0) cube complex, we denote the collection of hyperplanes intersecting $F$ transversally by $\mathcal H(F)$.
\begin{lemma}
	\label{lem_uniform}
Let $X$ be a CAT(0) cube complex. Let $\{F_\lambda\}_{\lambda\in\Lambda}$ be a family of $k$-dimensional flats which are uniformly Morse in the sense of Definition~\ref{def_morse}. Then there exists $C>0$ such that for any $F_\lambda$ and convex subcomplex $K$ with $\mathcal H(F_\lambda)\subset\mathcal H(K)$, then $F_{\lambda}\subset N_C(K)$.
\end{lemma}

\begin{corollary}
	\label{cor_qi_classification}
Let $\Gamma_1$ and $\Gamma_2$ be atomic graphs. Let $G_{\Gamma_1}$ and $H_{\Gamma_2}$ be two graph products with vertex groups being infinite right-angled Coxeter groups containing rank 1 elements. Then $G_{\Gamma_1}$ and $H_{\Gamma_2}$ are quasi-isometric if and only if there exists a graph isomorphism $f:\Gamma_1\to \Gamma_2$ such that $G_{v}$ is quasi-isometric to $H_{f(v)}$ for any $v\in\Gamma_1$.
\end{corollary}

\begin{proof}
Note that if $G_\Gamma$ and $H_\Gamma$ are graph products with the same defining graph $\Gamma$ and there are bilipschitz maps $f_v:G_v\to H_v$ between each vertex groups, then Green's normal form theorem for graph products \cite[Theorem 3.9]{green1990graph} (see also \cite[Theorem 2.2]{antolin2015tits}) implies there is a well-defined map $f:G_\Gamma\to H_\Gamma$ induced by $\{f_v\}_{v\in V\Gamma}$, which is bilipschitz. Now the if direction follows as two right-angled Coxeter groups are quasi-isometric if and only if they are bilipschitz  \cite{whyte1999amenability}.
	
Let $q:G_{\Gamma_1}\to H_{\Gamma_2}$ be a quasi-isometry. By Proposition~\ref{prop_combinatorial_rigidity}, it suffices show $q$ preserves maximal standard product cosets up to uniform finite Hausdorff distance. Define \emph{star coset} of $G_{\Gamma_1}$ to be a left coset of a subgroup of form $G_{\st(v)}$ for some vertex $v\in\Gamma_1$. As each maximal standard product coset can be realized as the coarse intersection of star cosets, it suffices to show $q$ preserves star cosets up to uniform finite Hausdorff distance. Actually, we are reduced to show the claim that any star coset $C$, $q(C)$ is contained in a uniform neighborhood of another star coset. This reduction follows by considering the quasi-isometry inverse, and noting that a star coset is not coarsely contained in a different star coset (as $\Gamma$ is atomic and each vertex group is infinite).

Suppose $G_{\Gamma_1}$ (resp. $H_{\Gamma_2}$) is a right-angled Coxeter groups with defining graph $\Lambda_1$ (resp. $\Lambda_2$). There is a natural map $p_1:\Lambda_1\to\Gamma_1$ sending the defining graphs of vertex groups to the corresponding vertex in $\Gamma_1$. Thus if $\Lambda\circ\Lambda'$ is a join subgraph of $\Lambda_1$, then each vertex of $p_1(\Lambda)$ has distance $\leq 1$ to every vertex in $p_1(\Lambda')$. Hence either $p_1(\Lambda)=p_1(\Lambda')$ and $p_1(\Lambda)$ is contained in a star of a vertex of $\Gamma_1$, or $p_1(\Lambda)\neq p_1(\Lambda')$, in which case $p_1(\Lambda\circ\Lambda')$ splits as join of two subgraphs, hence is again contained in a vertex star of $\Gamma_1$ as $\Gamma_1$ does not have cycle of length $<5$.

Let $X_1$ and $X_2$ be the Davis complexes associated to $G_{\Gamma_1}$ and $H_{\Gamma_2}$. A \emph{standard subcomplex} of $X_1$ is a subcomplex arising from a full subgraph of $\Lambda_1$. To prove the above claim, it suffices to show $q(C)$ is coarsely contained in a uniform neighborhood of a convex subcomplex $K$ which splits as a nontrivial product of two CAT(0) cube complexes $K=K_1\times K_2$. The reason is that such $K$ is contained in a standard subcomplex $K'$ which splits as a product of two standard subcomplexes $K'_1\times K'_2$, and any such $K'$ is contained in a star coset by the previous paragraph.

Let $S$ be a star coset. Suppose $S=G_{\st(v)}$ for $v\in\Gamma_1$. Let $\ell_1$ (resp. $\ell_2$) be a rank 1 periodic geodesic in the standard subcomplex associated with $G_v$ (resp. $G_{lk(v)}$). For $g_1\in G_v$ and $g_2\in G_{lk(v)}$, let $F_{g_1,g_2}=g_1\ell_1\times g_2\ell_2$. Then $F_{g_1,g_2}$ is a Morse flat \cite[Corollary 1.20]{HKS}. For each star coset of $G$, we select a family of Morse flat in a similar way. The collection of all such Morse flat is uniformly Morse, as there are only finitely many orbits of them under the isometry group action. Thus they have a common Morse data.
By Theorem~\ref{thm:morse lemma}, there exists $D>0$ such that for any $g_1,g_2$ as above, there is a Morse flat $F'_{g_1,g_2}$ such that $d_H(q(F_{g_1,g_2}),F'_{g_1,g_2})<D$. 
As $d_H(S,\cup_{g_1\in G_v,g_2\in G_{lk(v)}}F_{g_1,g_2})<D'$ for some $D'$ independent of the star coset, it remains to show that there is $D''>0$ independent of the star coset such that $\cup_{g_1\in G_v,g_2\in G_{lk(v)}}F'_{g_1,g_2}$ is contained in the $D''$-neighborhood of a convex subcomplex $K\subset X_2$ admitting a nontrivial product splitting. We will only show this when there exist $g_1,g'_1\in G_v$ and $g_2,g'_2\in G_{lk(v)}$ such that $d_H(g_2\ell_2,g'_2\ell_2)=\infty$ and $d_H(g_1\ell_1,g'_1\ell_1)=\infty$. 
Other cases are simpler and similar. Note that the existence of such $g_1,g'_1,g_2,g'_2$ implies that by taking possibly different $g_1,g'_1,g_2,g'_2$, we can assume $\D_\infty g_2\ell_2\cap \D_\infty g'_2\ell_2=\emptyset$ and $\D_\infty g_1\ell_1\cap \D_\infty g'_1\ell_1=\emptyset$.

Let $\mathcal{H}_{g_1,g_2}=\mathcal H(F'_{g_1,g_2})$. These hyperplanes intersect $F'_{g_1,g_2}$ in parallel family of lines. Since $F'_{g_1,g_2}$ is Morse, by \cite[Proposition 11.3]{HKS} and an argument similar to \cite[Theorem 3.4]{huang2016cocompactly}, we know there are only two parallel family of lines which are orthogonal. 

Note that the coarse intersection of $F_{g_1,g_2}$ and $F_{g_1,g'_2}$ is a geodesic line. Then the same is true for $F'_{g_1,g_2}$ and $F'_{g_1,g'_2}$. Thus $q(g_1\ell_1)$ is Hausdorff close to a geodesic line. Similarly by consider the coarse intersection of $F'_{g'_1,g_2}$ and $F'_{g_1,g_2}$, we know $q(g_2\ell_2)$ is Hausdorff close to a geodesic line. Let $\mathcal H_1$ (resp. $\mathcal H_2$) be the collection of all hyperplanes of $X_2$ which have transversal intersection with a geodesic line that is Hausdorff close to $g_1\ell_1$ (resp. $g_2\ell_2$) for some $g_1\in G_v$ (resp. $g_2\in G_{lk(v)}$). Note that if a hyperplane interests a geodesic line transversely, then it intersects all geodesic lines in the same parallel family transversely. Thus $\mathcal H_i$ is well-defined. For $i=1,2$, let $h_i\in\mathcal H_i$ be a hyperplane dual to a line Hausdorff close to $q(g_i\ell_i)$, then $h_1$ and $h_2$ intersect $F'_{g_1,g_2}$ in orthogonal lines. Thus each element in $\mathcal H_1$ intersects every element in $\mathcal H_2$. This gives rise to a convex subcomplex $K\subset X_2$ with nontrivial product splitting $K=K_1\times K_2$ (the collection of hyperplanes dual to $K_i$ contains $\mathcal H_i$ as a possibly proper subset). Note that $\mathcal{H}_{g_1,g_2}\subset \mathcal H_1\cup\mathcal H_2$ for each $F'_{g_1,g_2}$. By Lemma~\ref{lem_uniform}, $\cup_{g_1\in G_v,g_2\in G_{lk(v)}}F'_{g_1,g_2}$ is coarsely contained in $K$ with uniform constant $D''$. 
\end{proof}


\begin{remark}[Comments on generalizations]
	\label{rmk_graph_product}
Recall that a simplicial graph $\Gamma$ is rigid if the associated right-angled Artin group has finite outer automorphism group. Motivated by quasi-isometric classification of graph products of $\mathbb Z$'s over rigid defining graphs in \cite{bks2,huang2017quasi} and the above corollary, we speculate that the following might be true. Let $G_1$ and $G_2$ be graph products of finitely generated groups with non-trivial Morse boundary (e.g. acylindrical hyperbolic groups) over rigid defining graphs. If $G_1$ and $G_2$ are quasi-isometric, then their defining graphs are isomorphic and the corresponding vertex groups are quasi-isometric. 

To prove this, it suffices to show product subgroups corresponding to maximal cliques in the defining graphs are preserved by quasi-isometries (see Proposition~\ref{prop_combinatorial_rigidity}). 
These subgroups are unions of Morse quasiflats which are products of Morse quasi-geodesics in their factors. It suffices to control the quasi-isometric image of these Morse quasiflats. For this purpose, one can use Theorem~\ref{thm_sublinear_close_intro} if the vertex groups are also CAT(0) (as graph products of CAT(0) groups are CAT(0) \cite[Theorem 8.8]{huang2016groups}). If the vertex groups are coarse median instead, then a Morse version of the main result in \cite{bowditch2019quasiflats} might be helpful.
\end{remark}

\bibliography{morse_quasiflats}
\bibliographystyle{alpha}

\end{document}